\documentclass[12pt,a4paper,reqno]{amsart}

\usepackage{mathrsfs}

\usepackage{amsmath}   
\usepackage{amsfonts}  

\makeatletter
\def\subsection{\@startsection{subsection}{2}%
  \z@{.5\linespacing\@plus.7\linespacing}{.3\linespacing}%
  {\normalfont\bfseries}}
\makeatother

\usepackage{tikz}
\usetikzlibrary{positioning} 
\usetikzlibrary{arrows.meta} 
\usetikzlibrary{automata, arrows, positioning}
\usepackage[normalem]{ulem}
\usetikzlibrary{positioning, arrows.meta, fit}  

\newcommand{\ignore}[1]{}

\usepackage{pgfplots}
\pgfplotsset{compat=1.17}

\usetikzlibrary{positioning, arrows.meta, shapes.geometric, calc}
\usepackage{amsmath, amsthm, amssymb}
\usepackage[T1]{fontenc}
\usepackage{tgbonum}
\usepackage{authblk}
\usepackage{float}
\usepackage{subcaption}
\usepackage{caption}
\usepackage{graphicx} 
\usepackage{times,authblk}
\usepackage{amsmath, amsfonts, amssymb, amsthm, bm, stmaryrd, enumerate, mathtools}
\usepackage{xspace}
\usepackage{tikz}
\usepackage{comment}
\usepackage{mathabx}
\usetikzlibrary{arrows.meta}
\usepackage{enumitem}
\usepackage{tikz-cd}
\usepackage{amsmath, amsthm} 
\usepackage[colorlinks=true,linkcolor=blue]{hyperref} 
\tikzset{commutative diagrams/.cd}
\tikzcdset{row sep/tiny=0.12 em}

\usetikzlibrary{positioning,arrows.meta}
\usetikzlibrary{positioning}
\usetikzlibrary{cd}

\usetikzlibrary{positioning,arrows,fit}

\usetikzlibrary{intersections}

\newlength\myheight

        \usetikzlibrary{calc}

\usepackage{amsthm}

\newtheorem{Def}{Definition}[section]
\newenvironment{definition}{\begin{Def} \rm}{\end{Def}}

\newtheorem{example}[Def]{Example}

\theoremstyle{definition} 
\newtheorem{lemma}[Def]{Lemma}
\newtheorem{proposition}[Def]{Proposition}

\newtheorem{theorem}[Def]{Theorem}
\newtheorem{remark}[Def]{Remark}

\newtheorem{claim}{Claim}[Def]

\newcommand{\leqs}{\leqslant}

\newcommand{\FSUPH}{{\mbox{$F\kern -.85ex-\kern -.35ex\mathbb{S}$}}}
\newcommand{\FSUPL}{{\mbox{$F\kern -.85ex-\kern -.35ex\mathbb{S}_\leqs$}}}
\newcommand{\smFSUPL}{{{F\kern -.35ex-\kern -.08ex\mathbb{S}_\leqs}}}
\newcommand{\FINFH}{{\mbox{$F\kern -.85ex-\kern -.35ex\mathbb{M}$}}}
\newcommand{\FINFL}{{\mbox{$F\kern -.85ex-\kern -.35ex\mathbb{M}_\leqs$}}}
\newcommand{\VSUPH}{{\mbox{$V\kern -.85ex-\kern -.35ex\mathbb{S}$}}}
\newcommand{\VJ}{{\mbox{$V\kern -.85ex-\kern -.35ex\mathbb{J}$}}}
\newcommand{\VMod}{{\mbox{$V\kern -.85ex-\kern -.35ex\mathbf{Mod}$}}}
\newcommand{\VFSUPH}{{\mbox{$V\kern -.85ex-\kern -.35ex F\kern -.85ex-\kern -.35ex\mathbb{S}$}}}
\newcommand{\VFSUPL}{{\mbox{$V\kern -.85ex-\kern -.35ex F\kern -.85ex-\kern -.35ex\mathbb{S}_\leqs$}}}

\newcommand{\EndoHom}[1]{\mathcal{E}{\kern-.5ex}\textit{n}{\kern-.2ex}\textit{d}{\kern-.2ex}\textit{o}(#1)}

\usepackage{graphicx} 

\usepackage{enumerate,amsmath,amsthm,amsfonts}
\usepackage{amsmath}
\usepackage{amsfonts}
\usepackage{amssymb}

\usepackage{epsfig}
\usepackage{graphicx,color}
\usepackage{graphpap,color}
\usepackage{graphicx}
\usepackage{latexsym}
\usepackage{float}
\usepackage{setspace}
\usepackage{hyperref}   

.360pk
.360pk
.360pk

\usepackage{amsfonts,bm}

\usepackage[utf8]{inputenc}
\usepackage[T1]{fontenc}

\usepackage{mathrsfs}  
\usepackage{amssymb} 
\usepackage{mathtools}
\usepackage{tikz} 

\usepackage{geometry}

\begin{document}
\include {mak}
~\vspace{-16mm}


\newcommand{\norm}[1]{\left\|#1\right\|}
\newcommand{\hkd}[1]{\left\langle#1\right\rangle}
\newcommand{\krk}[1]{\left\{#1\right\}}
\newcommand{\krb}[1]{\left(#1\right)}
\newcommand{\abs}[1]{\left|#1\right|}
\newcommand{\ol}[1]{\overline{#1}}
\newcommand{\rref}[1]{[\ref{#1}]}

\oddsidemargin 16.5truemm
\evensidemargin 16.5truemm

\thispagestyle{plain}

\begin{center}
{\Large\bf CATEGORICAL EQUIVALENCE BETWEEN FINITARY ORTHOMODULAR DYNAMIC ALGEBRAS AND ORTHOMODULAR LATTICES\\ 
\vspace{1.5cc}
{\large\sc JUANDA KELANA PUTRA$^{1,2}$ AND RICHARD SMOLKA$^{1}$}\\

\vspace{0.3 cm} {\small $^{1}$ Department of Mathematics and Statistics, Masaryk University, Czech Republic}
\\
{\small $^{2}$ Department of Mathematics, Walisongo State Islamic University, Indonesia}

\rule{0mm}{6mm}
\begin{NoHyper}
\renewcommand{\thefootnote}{}\footnotetext{\scriptsize
{\it 2020 Mathematics Subject Classification}: 03G10, 06A12, 06B05, 06B23, 06C05, 18B35.  
\\
{\rm Received: dd-mm-yyyy, accepted: dd-mm-yyyy.}
}
\end{NoHyper}
}

\vspace{1.5cc}

\parbox{24cc}{{\Small{\bf Abstract.}
This paper reveals a categorical equivalence connecting two distinct quantum logic structures. The first is the orthomodular lattice, an algebraic system designed to formalize the properties of quantum systems. The second is a finitary orthomodular dynamic algebra, a specialized development of the orthomodular dynamic algebra where the underlying quantum actions are restricted to be finitary. The applicability of the result extends to more specialized lattices, such as Hilbert lattices of closed subspaces of a Hilbert space, beyond general orthomodular lattices. As these lattice structures exhibit connections to a diverse array of quantum structures, the established equivalence categorically bridges unital involutive m-semilattices with a broad spectrum of quantum formalisms.

}}
\end{center}

\vspace{0.25cc}
\parbox{24cc}{\Small {\it Key words and Phrases}: Finitary orthomodular dynamic algebra, Orthomodular lattice, Unital involutive m-semilattice}

\vspace{1.5cc}

\section{Introduction}\label{Orthomodular dynamic algebras}
Various quantum logic structures prioritize different elements of quantum reasoning. Our focus in this paper is on two particular quantum logic structures. The first structure we examine is orthomodular lattice, which serve as the core algebraic structure underpinning quantum logic, providing a formal, non-classical framework for modeling the testable properties or propositions about a quantum system. The second structure we consider is the orthomodular dynamic algebras, which provide an axiomatic framework that makes quantum actions first-class citizens, allowing for formal reasoning about their composition and quantum-logical properties within a unified system. It extends the static view of quantum properties (captured by orthomodular lattices) to include the dynamic, operational aspects of quantum systems, as proposed in \cite{KRSZ}.

This paper aims to establish a categorical equivalence between orthomodular lattices and finitary orthomodular dynamic algebras, a specialized form of orthomodular dynamic algebra. This equivalence will clarify how dynamics emerges from orthomodular or Hilbert lattices  of closed subspaces of a Hilbert space, and solidify the connection between unital involutive m-semilattices (please refer to \cite{Botur} for a detailed survey of unital involutive m-semilattices) and orthomodular lattices, ultimately enhancing their application in quantum logic phenomena. 

The most significant finding about the connection between orthomodular lattices and orthomodular dynamic algebras is presented in \cite{KRSZ}. That work aimed to prove a categorical equivalence between complete orthomodular lattices (representing static quantum properties) and orthomodular dynamic algebras (modeling dynamic quantum actions). This equivalence highlights the mathematical interconnectedness of static and dynamic quantum perspectives, clarifying how dynamics originates from orthomodular lattices and strengthening the link between dynamic algebras and quantum reasoning.

Extensively used to relate different structures and transfer results, categorical equivalence and duality are vital. An equivalence comprises two functors whose compositions are naturally isomorphic to identity functors, while a duality is an equivalence with reversed morphisms in one category. Past research has established quantum-significant categorical equivalences (e.g., between quantum geometries and lattices, as in \cite{StubbeSteirteghem2007}) and dualities (e.g., between quantum lattices and graph-like structures, as in \cite{BergfeldKishidaSackZhong2014}). Our proposed equivalence builds upon this by connecting unital involutive m-semilattices to these established structures, specifically through their relation to propositional systems and Hilbert lattices.

The paper proceeds as follows: We first introduce the categories of orthomodular lattices and finitary orthomodular dynamic algebras in Section 2. Section 3 then details the functors comprising the categorical equivalence, explaining how they translate objects and map morphisms between the categories. Subsequently, Section 4 demonstrates this equivalence by constructing natural isomorphisms between the composed functors and the identity functors. The paper concludes in Section 5 with final remarks and directions for future research.

\section{Preliminaries}

\subsection{The Category $\mathbb{OML}$ of Orthomodular Lattices}

Our initial step involves standardizing notation and revisiting the foundational structures. An orthomodular lattice is defined as a bounded lattice augmented by an appropriate orthocomplement. For additional properties of the orthomodular lattices, we refer to \cite{Kalmbach83}.


\begin{definition}
An \textit{ortho-lattice} is a tuple $\mathcal{M} = \left( {M, \le ,\mathop  - \nolimits^ \bot} \right)$ satisfying the following conditions:
\begin{enumerate}
  \item $\left( {M, \le} \right)$ is a bounded lattice with least element $0$ and greatest element $1$;
  \item  The map $\mathop  - \nolimits^ \bot  :M \to M$ satisfies, for all $m,n\in M$,
    \begin{enumerate}
    \renewcommand{\labelenumi}{\Alph{enumi}.}
    
    \item  $m\wedge m^{\perp}=0$ and $m\vee m^{\perp}=1$,
    
    \item  $m \le n \Rightarrow {n^ \bot } \le {m^ \bot }$; 

    \item  $(m^{\perp})^{\perp}=m$,
\end{enumerate}
\end{enumerate}
An ortho-lattice $\mathcal{M}$ is defined to be an \textit{orthomodular lattice} if for all $m,n \in Y$ such that $m\le n$, it holds that $ n = m \vee \bigl(m^{\perp}\wedge n\bigr)$.
\end{definition}


Subsequently, we will provide a definition for the category of orthomodular lattices and ortho-lattice isomorphisms.


\begin{definition}
Given two orthomodular lattices ${\mathcal{M}_1} = \left( {{M_1},{ \le _1},\mathop  - \nolimits^{{ \bot _1}} } \right)$ and ${\mathcal{M}_2} = \left( {{M_2},{ \le _2},\mathop  - \nolimits^{{ \bot _2}} } \right)$, an \textit{ortho-lattice isomorphism} $g:{\mathcal{M}_1} \to {\mathcal{M}_2}$ is a function $g:{M_1} \to {M_2}$ that satisfies the following conditions:
\begin{enumerate}
\item $g$ is a bijection;
\item ${m} \le_1 {n} \Leftrightarrow g\left( {{m}} \right) \le_2 g\left( {{n}} \right)$;
\item $g\left( {{m^{{ \bot _1}}}} \right) = {\left( {g\left( m \right)} \right)^{^{{ \bot _2}}}}$,
\end{enumerate}
for every ${m},{n} \in {M_1}$.\\
It can easily be shown that $\mathbb{OML}$ forms a category where objects are orthomodular lattices and morphisms are ortho-lattice isomorphisms.
\end{definition}



\begin{remark}
Let $g:{\mathcal{M}_1} \to {\mathcal{M}_2}$ be an ortho-lattice isomorphism. Then for all \(m,n\in M_1\), we have $g(m \vee n) = g(m)\vee g(n)$ and $ g(m\wedge n) = g(m) \wedge g(n) $.
\end{remark}

Additionally, within an ortho-lattice $\mathcal{M} = (M, \le, {}^{\perp})$, we can define two key operations for each element $m \in M$:
\begin{enumerate}
    \item \textit{Sasaki projection} (onto $m$) : This map, denoted ${\pi _m}: M \to M$ takes an element $n$ to $m \land (m^\perp \lor n)$;

    \item \textit{Sasaki hook} (from $m$) : This map, denoted $\pi^m: M \to M$ takes an element $n$ to $m^\perp \lor (m \land n)$.
\end{enumerate}
These two maps are always order-preserving. A significant property is that the ortho-lattice $\mathcal{M}$ is orthomodular if and only if, for every $m \in M$, the Sasaki projection $\pi _m$ is the left adjoint of the Sasaki hook $\pi^m$. This means that in an orthomodular lattice, Sasaki projections preserve arbitrary joins.



\begin{lemma}
Let $\mathcal{M} = \left( {M, \le ,\mathop  - \nolimits^ \bot} \right)$ be an orthomodular lattice. If a subset $A\subseteq M$ possesses a supremum (i.e., $\mathop \bigvee\limits_{a \in A} a$ exists), then for every $m \in M$, we have:
\begin{center}
$\pi _m\!\left(\mathop \bigvee\limits_{a \in A} a\right)
\;=\;\mathop \bigvee\limits_{a \in A} \pi_m(a).$
\end{center}
\end{lemma}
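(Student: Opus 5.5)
The plan is to derive the statement from the adjunction recalled just before it: in an orthomodular lattice, for each $m\in M$, the Sasaki projection $\pi_m$ is left adjoint to the Sasaki hook $\pi^m$. That is, for all $n,k\in M$,
\[
\pi_m(n)\le k \iff n\le \pi^m(k).
\]
Left adjoints preserve every supremum that exists. So the argument is the standard one, written out directly.

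If the adjunction were not taken as given, I would verify it first, and I expect this to be the only substantive step. For ($\Rightarrow$): assume $m\wedge(m^\perp\vee n)\le k$. Then $m\wedge(m^\perp\vee n)\le m\wedge k$. Hence
\[
n\le m^\perp\vee n = m^\perp\vee\bigl(m\wedge(m^\perp\vee n)\bigr)\le m^\perp\vee(m\wedge k).
\]
The middle equality is orthomodularity applied to $m^\perp\le m^\perp\vee n$. For ($\Leftarrow$): assume $n\le m^\perp\vee(m\wedge k)$. Then
\[
\pi_m(n)\le m\wedge\bigl(m^\perp\vee m^\perp\vee(m\wedge k)\bigr)=m\wedge\bigl(m^\perp\vee(m\wedge k)\bigr)=m\wedge k\le k .
\]
The equality $m\wedge(m^\perp\vee(m\wedge k))=m\wedge k$ is orthomodularity in its dual form, applied to $m\wedge k\le m$ and obtained by taking orthocomplements.

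Now let $s=\bigvee_{a\in A}a$. Since $\pi_m$ is order-preserving, $\pi_m(a)\le\pi_m(s)$ for every $a\in A$, so $\pi_m(s)$ is an upper bound of $\{\pi_m(a):a\in A\}$. Next, let $k$ be any upper bound of this set. For each $a\in A$, $\pi_m(a)\le k$, so the adjunction gives $a\le\pi^m(k)$. Thus $\pi^m(k)$ is an upper bound of $A$, and $s\le\pi^m(k)$. Applying the adjunction again gives $\pi_m(s)\le k$. Hence $\pi_m(s)$ is the least upper bound of $\{\pi_m(a):a\in A\}$. This shows both that the right-hand join exists and that it equals $\pi_m(s)$.

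Two remarks round this off. Completeness of $M$ is never used: only the existence of $\bigvee A$ is assumed, and the existence of $\bigvee_a\pi_m(a)$ is part of the conclusion. The case $A=\emptyset$ is also covered, since then $s=0$ and $\pi_m(0)=m\wedge m^\perp=0$.
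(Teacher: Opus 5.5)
Your proof is correct and takes the same route as the paper. The paper gives no separate proof of this lemma; it derives it from the preceding remark that $\pi_m$ is left adjoint to the Sasaki hook $\pi^m$, so that $\pi_m$ preserves every existing join, and your direct verification of that adjunction (via the orthomodular law and its dual) supplies the step the paper only asserts.
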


\begin{lemma}\label{Sasaki is Involutive}
 Let $\mathcal{M} = \left( {M, \le ,\mathop - \nolimits^ \bot} \right)$ be an orthomodular lattice. Define the set $S_\mathcal{M}$ as the set of all finite compositions of Sasaki projections on $M$:
$$S_\mathcal{M} = \left\{ {{\pi _{{m_1}}} \circ \cdots \circ {\pi _{{m_j}}} \mid {m_1}, \ldots ,{m_j} \in M, j \in {\mathbb{N}^+}} \right\}$$
This set $S_\mathcal{M}$ is precisely the smallest set that contains all Sasaki projections $\{\pi_m : M\to M \mid m\in M\}$ and is closed under the operation of function composition ($\circ$).

Furthermore, an involution ${ - ^ * }:S_\mathcal{M} \to S_\mathcal{M}$ is defined for any element $P = {\pi _{{m_1}}} \circ \cdots \circ {\pi _{{m_j}}} \in S_\mathcal{M}$ as:
$$P^ * = {\left( {{\pi _{{m_1}}} \circ \cdots \circ {\pi _{{m_j}}}} \right)^ * } = \pi _{{m_j}}^ * \circ \cdots \circ \pi _{{m_1}}^ * $$
Based on these definitions, the algebraic structure $\left(S_\mathcal{M} ,\circ, \operatorname{id}_M,{- ^ *}\right)$ constitutes an involutive monoid, where $\operatorname{id}_M$ denotes the identity map on $M$.
\end{lemma}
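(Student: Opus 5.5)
The plan is to check the three components of the statement in turn: the set-theoretic description of $S_\mathcal{M}$, the monoid structure, and then the involution. Throughout I read $\pi_m^*$ as $\pi_m$ itself, since Sasaki projections should be "self-adjoint". I expect the only real difficulty to be showing that $-^*$ is well defined.

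\textbf{The set $S_\mathcal{M}$ and the monoid structure.} First I would show that $S_\mathcal{M}$ is the smallest set containing all $\pi_m$ and closed under $\circ$.
\begin{itemize}
\item $S_\mathcal{M}$ contains each $\pi_m$ (take $j=1$).
\item $S_\mathcal{M}$ is closed under composition, because concatenating two finite strings of Sasaki projections gives another one.
\item Conversely, any set containing all $\pi_m$ and closed under $\circ$ contains every finite composition, by induction on $j$.
\end{itemize}
Next I would check that $(S_\mathcal{M},\circ,\operatorname{id}_M)$ is a monoid.
\begin{itemize}
\item Associativity is inherited from composition of functions.
\item The unit lies in $S_\mathcal{M}$, because $\operatorname{id}_M=\pi_1$: we have $\pi_1(n)=1\wedge(0\vee n)=n$.
\end{itemize}

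\textbf{Well-definedness of $-^*$ (the main obstacle).} A single map $P\in S_\mathcal{M}$ may be written as $\pi_{m_1}\circ\cdots\circ\pi_{m_j}$ in many ways. So I must show that the reversed composite $\pi_{m_j}\circ\cdots\circ\pi_{m_1}$ depends only on the map $P$, not on the chosen word. My plan is to characterize the reversal intrinsically through the orthogonality relation, where $x\perp y$ means $x\le y^\perp$.

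The key step is the symmetry $\pi_m(n)\perp k \iff n\perp \pi_m(k)$ for all $m,n,k\in M$. I would derive it from the adjunction $\pi_m\dashv\pi^m$ stated above, which holds because $\mathcal{M}$ is orthomodular. That adjunction gives
$$\pi_m(n)\le k^\perp \iff n\le \pi^m(k^\perp)=m^\perp\vee(m\wedge k^\perp).$$
By De Morgan (which holds in any ortho-lattice, using conditions B and C), the right-hand side equals $\bigl(m\wedge(m^\perp\vee k)\bigr)^\perp=\pi_m(k)^\perp$. This proves the symmetry.

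Iterating the symmetry along the word gives, for all $n,k\in M$,
$$P(n)\perp k \iff n\perp \pi_{m_j}\circ\cdots\circ\pi_{m_1}(k).$$
Now suppose two maps $Q,Q'$ both satisfy $P(n)\perp k\iff n\perp Q(k)$ for all $n,k$. Then for each fixed $k$, the elements $Q(k)^\perp$ and $Q'(k)^\perp$ have exactly the same elements of $M$ below them. Hence $Q(k)^\perp=Q'(k)^\perp$, and by condition C, $Q(k)=Q'(k)$. So the reversed composite is uniquely determined by $P$, and $P^*$ is well defined. By construction $P^*\in S_\mathcal{M}$.

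\textbf{Involution axioms.} With well-definedness in hand, the axioms follow directly from the reversal formula on representatives.
\begin{itemize}
\item $(P\circ Q)^*=Q^*\circ P^*$, because reversing a concatenated word reverses the order of the two blocks.
\item $P^{**}=P$, because reversing a word twice returns the original word.
\item $\operatorname{id}_M^*=\pi_1^*=\pi_1=\operatorname{id}_M$.
\end{itemize}
Hence $(S_\mathcal{M},\circ,\operatorname{id}_M,-^*)$ is an involutive monoid.
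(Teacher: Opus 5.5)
Your proof is correct. The paper states this lemma without any proof, so the only comparison is with what its definition silently assumes. It defines $P^*$ on a chosen word $\pi_{m_1}\circ\cdots\circ\pi_{m_j}$, never says what $\pi_m^*$ is, and never checks that the reversed word gives a function depending only on $P$.

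You correctly identify this as the one non-trivial point and settle it intrinsically. The symmetry $\pi_m(n)\perp k\iff n\perp\pi_m(k)$, which you get from $\pi_m\dashv\pi^m$ and $\pi^m(k^\perp)=\pi_m(k)^\perp$, shows that the reversed composite is the unique map $Q$ with $P(n)\perp k\iff n\perp Q(k)$. This justifies both the reading $\pi_m^*=\pi_m$ and the well-definedness of $-^*$. It also makes clear that orthomodularity is exactly what is used, since the Sasaki adjunction characterizes orthomodular lattices among ortholattices.

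Your argument is also what underwrites later steps of the paper that compute $-^*$ on representatives. Examples are $A^*=\{a^*\mid a\in A\}$ in $\Gamma(\mathcal{M})$ and the verification that $\Gamma(k)$ preserves $-^*$.

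The only unproved input you rely on is the adjunction $\pi_m\dashv\pi^m$, which the paper likewise only asserts. To make the argument self-contained, it follows in two lines: apply the orthomodular law to $m^\perp\le m^\perp\vee n$, and its dual form to $m\wedge k\le m$.
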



\subsection{The Category $\mathbb{FODA}$ of Finitary Orthomodular Dynamic Algebras}

This subsection establishes the category of finitary orthomodular dynamic algebras. Essentially, a finitary orthomodular dynamic algebra enriches an $m$-semilattice. We will start by defining the $m$-semilattice and other necessary notions, then proceed to define the quantum dynamic algebra. Further information regarding $m$-semilattices can be obtained from \cite{Paseka1986} and \cite{Botur}.

\begin{definition}
An \textit{m-semilattice} is a tuple $\mathcal{K} = (K, \sqcup,  \odot )$ satisfying the following conditions:
\begin{enumerate}
    \item $(K, \sqcup)$ is a bounded join-semilattice, where $0 \in K$ is the least element and $1 \in K$ is the greatest element with respect to the partial order induced by the join operation $\sqcup$.
    \item The binary operation $\odot : K \times K \to K$ is associative.
    \item The operation $\cdot$ distributes over finite joins in $K$. Specifically, for every $k \in K$ and for any finite indexed collection $\{ {k_i} \mid i \in J\} $ of elements from $K$, the following properties hold:
    \begin{enumerate}[label=\alph*.]
\item $k \odot \left( {\mathop \bigsqcup \limits_{i \in J} {k_i}} \right) = \mathop \bigsqcup \limits_{i \in J} \left( {k \odot {k_i}} \right)$;

\item $\left( {\mathop \bigsqcup\limits_{i \in J} {k_i}} \right) \odot k = \mathop \bigsqcup\limits_{i \in J} \left( {{k_i} \odot k} \right)$.
\end{enumerate}
\end{enumerate}
\end{definition}

The partial order induced by the join operation on $K$ is denoted by $\sqsubseteq$. A direct consequence of the distributivity of the binary operation $\odot$ over finite joins is that $\odot$ is monotone with respect to $\sqsubseteq$. Furthermore, this distributivity implies that the element $0$ acts as a zero element within the semigroup $(K, \odot)$.

An \textit{involutive m-semilattice} is formally defined as an $m$-semilattice $K$ equipped with a unary operation denoted by $\mathop  - \nolimits^ *  $, which serves as a semigroup involution. This operation satisfies the following properties for all $k, l \in K$ and for any finite indexed collection $\{ {k_i}:i \in J\} $ of elements from $K$:
\begin{enumerate}
    \item $k^{**} = k$
    \item $(k \odot l)^* = l^* \odot k^*$
    \item ${\left( {\mathop \bigsqcup \limits_{i \in J} {k_i}} \right)^ * } = \mathop \bigsqcup \limits_{i \in J} k_i^ * $
\end{enumerate}
An involutive $m$-semilattice is called \textit{unital} iff there exists \(e \in K\) such that \(e \cdot a = a \cdot e = a\) for all \(a \in K\).

We now introduce the finitary generalized dynamic algebra, an extended form of the generalized dynamic algebra. For background on generalized dynamic algebras, see \cite{KRSZ}.

\begin{definition}\label{finitary-goda}
A \emph{finitary generalized dynamic algebra} is a tuple \(\mathfrak{K} = (K, \bigsqcup, \odot, \thicksim)\) satisfying the following conditions:
\begin{enumerate}
    \item \(K\) is a non-empty set.
    \item \(\bigsqcup : \mathscr{P}_{\text{fin}}(K) \rightarrow K\), where \(\mathscr{P}_{\text{fin}}(K)\) denotes the set of all finite subsets of \(K\), is a finitary operation on \(K\).
    \item \(\odot : K \times K \to K\) is a binary operation on \(K\).
    \item \(\thicksim : K \to K\) is a unary operation on \(K\).
\end{enumerate}
\end{definition}

We now present several constructions involving a finitary generalized dynamic algebra \(\mathfrak{K} = (K, \bigsqcup, \odot, \thicksim)\).

\begin{equation*}
\begin{split}
\widetilde K & \stackrel{\text{def}}{=} \{ \thicksim k \mid k \in K \} \\
\bigvee W & \stackrel{\text{def}}{=} \,\, \thicksim \left(\thicksim \bigsqcup W\right), \, \text{for any finite}\, W \subseteq \widetilde K\\
\bigwedge W & \stackrel{\text{def}}{=} \,\, \thicksim \bigsqcup \left\{ { \thicksim w:w \in W} \right\}
, \, \text{for any finite}\, W \subseteq \widetilde K\\
\preceq \,\,\,\,\, & \stackrel{\text{def}}{=} \,\, \left\{ {\left( {k,l} \right) \in \widetilde K \times \widetilde K \mid \bigvee \left\{ {k,l} \right\} = l} \right\}\\
\left\langle \widetilde K \right\rangle  & \stackrel{\text{def}}{=} \,\, \left\{ {k \in K:k = {w_1}  \odot  \cdots  \odot {w_n}, \, \text{for some}\, n \in {\mathbb{N}^+} \, \text{and}\, {w_1}, \ldots ,{w_n} \in \widetilde K} \right\}.
\end{split}
\end{equation*}

For a fixed element ${k \in K}$, we define a unary operation ${\ulcorner k \urcorner : K \to K}$. This operation is formally given by:
$$ \ulcorner k \urcorner \left( l \right) =\,\, \thicksim \left( { \thicksim \left( {k \odot l} \right)} \right) $$
for every ${l \in K}$. Building on this, we introduce an equivalence relation, denoted by ${\equiv}$:
$$ \equiv \,\,\stackrel{\text{def}}{=} \,\, \left\{ {\left( {k,l} \right) \in K \times K \mid \ulcorner k \urcorner \left( w \right) = \ulcorner l \urcorner \left( w \right), \, \text{for every}\, w \in \widetilde K} \right\}. $$
Two elements $k$ and $l$ are identified by $ \equiv$ iff they produce the same result when their corresponding unary operations are applied to any ${w \in \widetilde K}$.

This framework then allows us to introduce the finitary orthomodular dynamic algebra, which expands upon the existing structure of orthomodular dynamic algebras. For a more detailed understanding of orthomodular dynamic algebras, please refer to \cite{KRSZ}.

\begin{definition}\label{deffoda}
A \textit{finitary orthomodular dynamic algebra} is a finitary generalized dynamic algebra $\mathfrak{K} = (K , {\bigsqcup } , {\odot } , \thicksim )$ extended with a unary operation ${-^*}:K \to K$. This extended structure must satisfy the following conditions:

\begin{enumerate}[label=(\textbf{FODA \arabic*}), leftmargin=2cm]
    \item $(K, \sqcup, \odot, {-^*})$ forms a unital involutive $m$-semilattice. Here, $\sqsubseteq$ is the partial order defined by $\sqcup$, and $\bigsqcup$ represents a finite join with respect to $\sqsubseteq$.

    \item $(\widetilde K , {\preceq } , \thicksim )$ is an orthomodular lattice. An essential property is that if an element $x$ is in $\widetilde K$, then its conjugate $x^*$ must also be in $\widetilde K$.

    \item If a set $A$ meets these criteria:
    \begin{enumerate}[label=(\alph*), leftmargin=0.69cm]
        \item $\widetilde K \subseteq A \subseteq K$.
        \item $A$ is closed under both the $\odot$ and ${-^*}$ operations.
        \item $A$ is closed under $\bigsqcup$, meaning that for any finite subset $B$ of $A$,
        its join ($\bigsqcup B$) must also be in $A$.
    \end{enumerate}
    Then, $A$ must be equal to $Q$. This condition ensures minimality.

    \item For any subsets $S , T \subseteq \left\langle \widetilde K \right\rangle$, their joins are equal ($\bigsqcup S = \bigsqcup T$) if and only if the sets themselves are equal ($S = T$). This ensures proper set equality.

    \item For any ${s,t\in \left\langle \widetilde K \right\rangle}$, $s = t$ if and only if $s \equiv t$. This condition guarantees completeness.

    \item For any $v , w \in \widetilde K $, the unary operation $\ulcorner v \urcorner (w)$ is equivalent to the Sasaki projection $\pi_{v}(w)$, which is defined as $\thicksim(\thicksim (v \odot w)) = v \wedge (\thicksim v \vee w)$.

    \item For each ${k, l\in K}$, the composition property holds: $\ulcorner k \urcorner (l) = \ulcorner k \urcorner (\thicksim (\thicksim l))$.
\end{enumerate}
\end{definition}

Our next step is to prove two useful lemmas. The first demonstrates the existence of a normal form for each element in a finitary orthomodular dynamic algebra.
\begin{lemma}\label{lemma2}
Consider a finitary orthomodular dynamic algebra $\mathfrak{K} = (K , {\bigsqcup } , {\odot } , \thicksim )$. For any element \(k\in K\), there is a unique finite set $S \subseteq \left\langle \widetilde K \right\rangle$ such that $k = \bigsqcup\limits_{s \in S} s $.
\end{lemma}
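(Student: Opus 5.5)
Existence will come from the minimality axiom (FODA 3), and uniqueness from the separation axiom (FODA 4).

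For existence, set
\[
A \;=\; \Bigl\{ k\in K \;\Bigm|\; k=\bigsqcup_{s\in S} s \text{ for some finite } S\subseteq \langle \widetilde K\rangle \Bigr\}.
\]
By (FODA 3), with $Q$ read as $K$, it suffices to verify conditions (a)--(c) for $A$; we then get $A=K$.

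Condition (a) holds because each $w\in\widetilde K$ lies in $\langle\widetilde K\rangle$ (take $n=1$), so $w=\bigsqcup\{w\}\in A$.

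Condition (c) holds because for finite $B\subseteq A$, each $b\in B$ equals $\bigsqcup S_b$ with $S_b$ finite. By associativity and idempotence of the semilattice join,
\[
\bigsqcup B=\bigsqcup\Bigl(\bigcup_{b\in B} S_b\Bigr),
\]
and the union of the $S_b$ is again a finite subset of $\langle\widetilde K\rangle$.

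For closure under $\odot$ in (b), use two-sided distributivity of $\odot$ over finite joins (FODA 1):
\[
\Bigl(\bigsqcup S\Bigr)\odot\Bigl(\bigsqcup T\Bigr)=\bigsqcup\{s\odot t \mid s\in S,\ t\in T\}.
\]
Here $s\odot t\in\langle\widetilde K\rangle$, since concatenating two products of elements of $\widetilde K$ gives another such product (by associativity).

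For closure under $-^*$ in (b), use $\bigl(\bigsqcup S\bigr)^*=\bigsqcup\{s^*\mid s\in S\}$. For $s=w_1\odot\cdots\odot w_n$ we have $s^*=w_n^*\odot\cdots\odot w_1^*$, and each $w_i^*\in\widetilde K$ by (FODA 2). Hence $s^*\in\langle\widetilde K\rangle$.

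The one delicate point in existence is the empty join. For $S=\emptyset$ we get $\bigsqcup\emptyset=0$, which is allowed since $\emptyset$ is a finite subset of $\langle\widetilde K\rangle$. The empty case also has to be kept consistent in the $\odot$ computation, where $0\odot x=0$ because $0$ is a zero of the semigroup. Since $0=\thicksim 1$ presumably lies in $\widetilde K$ anyway, this causes no trouble. With (a)--(c) established, (FODA 3) gives $A=K$, and so every $k$ has a representation as a finite join of elements of $\langle\widetilde K\rangle$.

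For uniqueness, suppose $k=\bigsqcup S=\bigsqcup T$ with $S,T$ finite subsets of $\langle\widetilde K\rangle$. Then (FODA 4) immediately gives $S=T$.

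The only real obstacle is interpretive rather than technical. The symbol $Q$ in (FODA 3) has to be read as $K$. One also has to take $\bigsqcup$ in (FODA 4) on finite subsets, as dictated by the type $\mathscr P_{\mathrm{fin}}(K)\to K$ of $\bigsqcup$. Once these readings are fixed, the argument reduces to the closure computations above.
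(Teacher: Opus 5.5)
Your proposal is correct and follows the paper's proof: the same set $A$ of finite joins of words is shown to satisfy (a)--(c) of \textbf{FODA 3}, which gives $A=K$, and uniqueness is read off from \textbf{FODA 4}; your justification of $s^*\in\langle\widetilde K\rangle$ via \textbf{FODA 2} and your handling of $S=\emptyset$ are more careful than the paper's. Drop the aside that $0=\thicksim 1$ presumably lies in $\widetilde K$: under your own convention $\bigsqcup\emptyset=0$, \textbf{FODA 4} forces $0\notin\langle\widetilde K\rangle$ (otherwise $\bigsqcup\emptyset=\bigsqcup\{0\}$ with $\emptyset\neq\{0\}$), so that claim is false, though your argument never uses it.
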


\begin{proof}
Let $A$ be the set of all elements in $K$ that can be expressed as a finite join of elements from $\left\langle \widetilde K \right\rangle$. That is, $A = \left\{ k \in K : k = \bigsqcup\limits_{t \in T} t \text{ for some finite set } T \subseteq \left\langle \widetilde K \right\rangle \right\}$. To show that $A$ satisfies the criteria of \textbf{FODA 3}, consider arbitrary $k, l \in A$, where $k = \bigsqcup\limits_{t \in T} t$ for a finite $T \subseteq \left\langle \widetilde K \right\rangle$, and $l = \bigsqcup\limits_{s \in S} s$ for a finite $S \subseteq \left\langle \widetilde K \right\rangle$.

\begin{enumerate}
\renewcommand{\labelenumi}{\alph{enumi}.}
    \item For any $v \in \widetilde K$, we have $v = \bigsqcup\limits_{x \in \{v\}} x$. Since $\{v\}$ is a finite subset of $\left\langle \widetilde K \right\rangle$, it follows that $v \in A$. Therefore, $\widetilde K \subseteq A \subseteq K$.

    \item Closure under operations:
    \begin{itemize}
        \item For $\odot$:
        \[
        k \odot l = \left(\bigsqcup\limits_{t \in T} t \right) \odot \left(\bigsqcup\limits_{s \in S} s \right) = \bigsqcup\limits_{(t,s) \in T \times S} (t \odot s)
        \]
        Since $T \times S$ is finite and $\{t \odot s : (t,s) \in T \times S\} \subseteq \left\langle \widetilde K \right\rangle$, it implies that $k \odot l \in A$. Thus, $A$ is closed under the $\odot$ operation.

        \item For ${-^*}$ :
        \[
        k^* = \left(\bigsqcup_{t \in T} t \right)^* = \bigsqcup_{t \in T} (t^*)
        \]
        As $t^* \in \left\langle \widetilde K \right\rangle$ for all $t \in T$, and $T$ is finite, we conclude that $k^* \in A$. So, $A$ is closed under the ${-^*}$  operation.
    \end{itemize}

    \item Closure under finite join ($\bigsqcup$):
    \[
    k \sqcup l = \left(\bigsqcup_{t \in T} t \right) \sqcup \left(\bigsqcup_{s \in S} s \right) = \bigsqcup_{u \in T \cup S} u
    \]
    Since $T \cup S$ is a finite subset of $\left\langle \widetilde K \right\rangle$, it follows that $k \sqcup l \in A$. Hence, $A$ is closed under finite joins.
\end{enumerate}
From these three facts, we conclude that $A$ satisfies the criteria of \textbf{FODA 3}. Consequently, $A = K$.

For the uniqueness property: If an element $k$ can be expressed as two different finite joins, say $k = \bigsqcup\limits_{t \in T} t = \bigsqcup\limits_{u \in U} u$ for finite sets $T, U \subseteq \left\langle \widetilde K \right\rangle$, then by \textbf{FODA 4}, we immediately have $T = U$. This establishes the uniqueness of such a representation.
\end{proof}

We now show that the interpretation of a composite test \( p_1 \cdots p_n \in\mathcal P \) can be decomposed into a composition of the interpretations of its components. This observation aligns the dynamic interpretation with Sasaki composition on the lattice of projections:

\begin{lemma}\label{composition}
In a finitary orthomodular dynamic algebra $\mathfrak{K} = (K , {\bigsqcup } , {\odot } , \thicksim, {-^*} )$, the quotation map $\ulcorner - \urcorner$ behaves like a homomorphism, linking concatenation to composition. This means for any $n \in \mathbb{N}^+$ and $w_1,\dots,w_n \in \widetilde K$, we get:
$$ \ulcorner w_1 \odot \cdots \odot w_n\urcorner = \ulcorner w_1\urcorner\circ\cdots\circ\ulcorner w_n\urcorner.$$
This homomorphic property also applies to the product of Sasaki projections, so:
$$ \ulcorner w_1  \odot \cdots \odot  w_n\urcorner = \ulcorner w_1\urcorner\circ\cdots\circ\ulcorner w_n\urcorner = \pi_{p_1}\circ\cdots\circ \pi_{p_n}. $$
\end{lemma}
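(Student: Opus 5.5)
The plan is to argue by induction on $n$, but with a statement tailored to what \textbf{FODA 7} and \textbf{FODA 6} actually give us. The definition $\ulcorner k\urcorner(l)=\thicksim(\thicksim(k\odot l))$ only composes well after applying $\thicksim\thicksim$ to the inner argument. So I would first record that $\thicksim\thicksim$ acts as the identity on $\widetilde K$. This holds because $(\widetilde K,\preceq,\thicksim)$ is an ortholattice by \textbf{FODA 2}, so $\thicksim$ restricted to $\widetilde K$ is an involution. Consequently $\ulcorner k\urcorner(l)\in\widetilde K$ for every $l$, since it is of the form $\thicksim(\cdot)$. The equality to be proved is read as an equality of maps on $\widetilde K$, which is how $\equiv$ is defined; on all of $K$ it also follows, via \textbf{FODA 7}.

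The case $n=1$ is trivial. For the inductive step, write $s=w_2\odot\cdots\odot w_n$ and fix $l\in\widetilde K$. By associativity (\textbf{FODA 1}), $\ulcorner w_1\odot s\urcorner(l)=\thicksim\thicksim(w_1\odot(s\odot l))=\ulcorner w_1\urcorner(s\odot l)$. Applying \textbf{FODA 7} with $k=w_1$ gives
\[
\ulcorner w_1\urcorner(s\odot l)=\ulcorner w_1\urcorner\bigl(\thicksim\thicksim(s\odot l)\bigr)=\ulcorner w_1\urcorner\bigl(\ulcorner s\urcorner(l)\bigr).
\]
The induction hypothesis turns $\ulcorner s\urcorner$ into $\ulcorner w_2\urcorner\circ\cdots\circ\ulcorner w_n\urcorner$. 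To get equality on all of $K$ rather than only on $\widetilde K$, apply the same computation with arbitrary $l\in K$; it works verbatim, since only associativity and \textbf{FODA 7} were used. This settles the first equation.

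For the second equation, note that each $\ulcorner w_i\urcorner$ is evaluated (after the first step) on elements of $\widetilde K$. By \textbf{FODA 6}, $\ulcorner w_i\urcorner(v)=\pi_{w_i}(v)$ for $v\in\widetilde K$, where $\pi_{w_i}$ is the Sasaki projection in the orthomodular lattice $\widetilde K$. Composing, $\ulcorner w_1\urcorner\circ\cdots\circ\ulcorner w_n\urcorner$ agrees with $\pi_{w_1}\circ\cdots\circ\pi_{w_n}$ on $\widetilde K$; the paper's $p_i$ are these $w_i$. On a general argument $l\in K$, the innermost map is first replaced by $\ulcorner w_n\urcorner(\thicksim\thicksim l)$ using \textbf{FODA 7}, and then every subsequent argument lies in $\widetilde K$.

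The main obstacle is the domain bookkeeping rather than any real computation. \textbf{FODA 6} only identifies $\ulcorner v\urcorner$ with $\pi_v$ on $\widetilde K$, so the identity with Sasaki projections must be stated on $\widetilde K$, or with the innermost argument normalized by $\thicksim\thicksim$. I would make this explicit, which is consistent with $\equiv$ and \textbf{FODA 5}. I would also check carefully that $\thicksim\thicksim$ fixes $\widetilde K$, since the induction relies on it.
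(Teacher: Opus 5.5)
Your proof is correct and is essentially the paper's argument. Both proceed by induction on $n$: regroup $(w_1\odot s)\odot l$ as $w_1\odot(s\odot l)$ by associativity, insert $\thicksim\thicksim$ via \textbf{FODA 7} to recognise $\ulcorner s\urcorner(l)$, apply the inductive hypothesis, and finish with \textbf{FODA 6}. Your domain bookkeeping for the last step is more careful than the paper's one-line appeal: you read $\ulcorner w\urcorner=\pi_w$ only on $\widetilde K$, or after normalising the innermost argument by $\thicksim\thicksim$. The fact that $\thicksim\thicksim$ fixes $\widetilde K$ is never actually used, however, since every value $\ulcorner k\urcorner(l)$ already lies in $\widetilde K$.
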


\begin{proof}
We proceed by mathematical induction.

\noindent \textit{Base Case (n=1)}:
For \(n=1\), we have $\ulcorner w_1 \urcorner = \ulcorner w_1 \urcorner$, which is trivially true.

\noindent \textit{Inductive Hypothesis}:
Assume the claim holds for some \(n=i\), so that for any \(w_1, \dots, w_i \in \widetilde K\):
\[
\ulcorner w_1 \odot \cdots \odot  w_i \urcorner = \ulcorner w_1 \urcorner \circ \cdots \circ \ulcorner w_i \urcorner.
\]

\noindent \textit{Inductive Step}:
We now show that the claim holds for \(n=i+1\). We want to prove that:
\[
\ulcorner w_1  \odot \cdots \odot  w_i  \odot w_{i+1} \urcorner = \ulcorner w_1 \urcorner \circ \cdots \circ \ulcorner w_i \urcorner \circ \ulcorner w_{i+1} \urcorner.
\]
For any \(k \in K\), by definition of \(\ulcorner \cdot \urcorner\) we have:
$$ \ulcorner w_1  \odot \cdots \odot  w_i \odot w_{i+1} \urcorner (k) = \sim\sim \Bigl( (w_1  \odot \cdots \odot  w_i   \odot  w_{i+1}) \odot k \Bigr) $$
Using the associativity of \(\odot\), we regroup as follows:
$$ (w_1 \odot \cdots \odot w_i \odot w_{i+1}) \odot k = w_1 \cdot \Bigl( (w_2  \odot \cdots \odot w_i \odot w_{i+1}) \odot k \Bigr) $$
Thus,
$$ \ulcorner w_1 \odot \cdots \odot w_i \odot w_{i+1} \urcorner (k) = \sim\sim \Bigl( w_1 \odot \bigl((w_2 \odot \cdots \odot w_i \odot w_{i+1}) \odot k\bigr) \Bigr) $$
Now, define $ l := (w_2 \odot \cdots \odot w_i  \odot w_{i+1}) \odot k$. Then, by the definition of \(\ulcorner w_1 \urcorner\):
$$ \ulcorner w_1 \urcorner (l) = \sim\sim (w_1 \odot l) = \sim\sim \Bigl( w_1 \odot \bigl((w_2 \odot \cdots \odot w_i \odot w_{i+1}) \cdot k\bigr) \Bigr) $$
That is, $ \ulcorner w_1 \odot \cdots \odot w_i \odot w_{i+1} \urcorner (k) = \ulcorner w_1 \urcorner (l)$.

Next, from \textbf{FODA 7}, we have that for any \(m, n \in K\), $ \ulcorner m \urcorner (n) = \ulcorner m \urcorner (\sim\sim n)$. Applying this with \(m = w_1\) and \(n = l\), we have $\ulcorner w_1 \urcorner (l) = \ulcorner w_1 \urcorner \Bigl(\sim\sim l \Bigr)$.
Recalling that \(l = (w_2 \odot \cdots \odot w_i \odot w_{i+1}) \odot k\), this becomes:
$$ \ulcorner w_1 \urcorner \Bigl(\sim\sim\bigl((w_2 \odot \cdots \odot w_i  \odot w_{i+1}) \odot k \bigr)\Bigr) $$
Thus, we obtain the rewriting:
$$ \ulcorner w_1 \odot \cdots \odot w_i \odot w_{i+1} \urcorner (k) = \ulcorner w_1 \urcorner \Bigl(\sim\sim\bigl((w_2 \odot \cdots \odot  w_i \odot w_{i+1}) \odot k \bigr)\Bigr) $$
Since \(k\) was arbitrary, it follows that:
$$ \ulcorner w_1 \odot \cdots \odot w_i  \odot w_{i+1} \urcorner = \ulcorner w_1 \urcorner \circ \ulcorner w_2 \cdots w_i w_{i+1} \urcorner $$
By the inductive hypothesis, $\ulcorner w_2 \odot \cdots \odot w_i \odot w_{i+1} \urcorner = \ulcorner w_2 \urcorner \circ \cdots \circ \ulcorner w_{i+1} \urcorner$.
Hence,
$$ \ulcorner w_1 \odot \cdots \odot w_i \odot w_{i+1} \urcorner = \ulcorner w_1 \urcorner \circ \ulcorner w_2 \urcorner \circ \cdots \circ \ulcorner w_{i+1} \urcorner $$
Finally, since for each \(w \in \widetilde K\) we have \(\ulcorner w \urcorner = \pi_w\), it follows that:
$$ \ulcorner w_1 \urcorner \circ \cdots \circ \ulcorner w_n \urcorner = \pi_{w_1} \circ \cdots \circ \pi_{w_n}.$$
The result follows by mathematical induction.
\end{proof}


We need to define a morphism between any two finitary orthomodular dynamic algebras to establish category of finitary orthomodular dynamic algebras. We will refer to this as an $\mathbb{FODA}$-morphism in the following definition.

\begin{definition}\label{q-morphism}
Let $\mathfrak{K}_1 = (K_1 , {\bigsqcup }_1 , {\odot }_1 , \thicksim_1, {-^{*_1}} )$ and $\mathfrak{K}_2 = (K_2 , {\bigsqcup }_2 , {\odot }_2 , \thicksim_2, {-^{*_2}} )$ be finitary orthomodular dynamic algebras. A $\mathbb{FODA}$–morphism $\phi :{\mathfrak{K}_1} \to {\mathfrak{K}_2}$ is a function $\phi :{K_1} \to {K_2}$ satisfying the following conditions for all finite $A\subseteq K_1$ and $k,l \in K_1$:

\begin{enumerate}
    \item $\phi(\widetilde K_1)\subseteq \widetilde K_2$, and $\phi|_{\widetilde K_1}:\widetilde K_1 \to \widetilde K_2$ is an ortho-lattice isomorphism.
    \item $\phi\bigl(\bigsqcup_1 A\bigr) =\bigsqcup_2\{\phi(a)\mid a\in A\}$
    \item $\phi(k\odot_1 l) =\phi(k)\odot_2\phi(l)$
    \item $\phi\bigl(\sim_1 k \bigr) =\sim_2\bigl(\phi(k)\bigr)$
    \item $\phi\bigl(k^{*_1}\bigr) =\bigl(\phi(k)\bigr)^{*_2}$
    \item $\phi(e_1)=e_2$, where $e_1$ and $e_2$ are the unit elements of $K_1$ and $K_2$, respectively.
\end{enumerate}

The category of finitary orthomodular dynamic algebras and $\mathbb{FODA}$-morphisms is denoted by $\mathbb{FODA}$.
\end{definition}

\section{Main Results}

\subsection{The Functor from Orthomodular Lattices to Finitary Orthomodular Dynamic Algebras}

This subsection presents the construction of the functor $\Gamma: \mathbb{OML} \to \mathbb{FODA}$.

\subsubsection{Mapping of Objects}
For an arbitrary orthomodular lattice ${\mathcal{M}} = \left( {{M},{ \le },\mathop  - \nolimits^{{ \bot }} } \right)$, the components of $\Gamma(\mathcal{M})$ are defined as follows:

\begin{enumerate}
    \item $S_\mathcal{M}$: The smallest set comprising all Sasaki projections on $M$ (i.e., $\{\pi_m : M\to M \mid m\in M\}$) and closed under function composition ($\circ$).
    \item $\mathscr{P}_{\text{fin}}(S_\mathcal{M})$: The set of all finite subsets of $S_\mathcal{M}$.
    \item $\odot$: A binary operation on $\mathscr{P}_{\text{fin}}(S_\mathcal{M})$ defined by $A\odot B \;=\;\{\,a\circ b\mid a\in A,\;b\in B\}$ for every $A, B \in \mathscr{P}_{\text{fin}}(S_\mathcal{M})$.
    \item $\sim$: A unary operation on $\mathscr{P}_{\text{fin}}(S_\mathcal{M})$ defined by $\sim A =\left\{\pi_{\bigl(\,\bigvee\{\,a(1) \, \mid \, a\in A\}\bigr)^\perp} \right\}$.

    \item ${ - ^*}$: A unary operation on $\mathscr{P}_{\text{fin}}(S_\mathcal{M})$ defined by $A^* = \{\,a^*\mid a\in A\} $ for every $A \in \mathscr{P}_{\text{fin}}(S_\mathcal{M})$.
\end{enumerate}

Thus, $\Gamma (\mathcal {M})$ is set as $\bigl(\mathscr{P}_{\text{fin}}(S_\mathcal{M}) ,\bigcup,\odot,\sim, { - ^*}\bigr)$. We will demonstrate that for any orthomodular lattice $\mathcal{M}$, the structure $\Gamma (\mathcal M)$ is a finitary orthomodular dynamic algebra, satisfying all conditions of Definition~\ref{finitary-goda}. Preceding this demonstration, several preliminary lemmas essential for subsequent proofs will be presented.


\begin{lemma}
\label{lem:multiplication-properties}
Let ${\mathcal{M}} = \left( {{M},{ \le },\mathop  - \nolimits^{{ \bot }} } \right)$ be an orthomodular lattice and let 
$$\Gamma (\mathcal {M}) =\bigl(\mathscr{P}_{\text{fin}}(S_\mathcal{M}) ,\bigcup,\odot,\sim\bigr).$$
The operation $\odot$ on $\mathscr{P}_{\text{fin}}(S_\mathcal{M})$ exhibits the following properties:
\begin{enumerate}
    \item Associativity: For all $A, B, C \in \mathscr{P}_{\text{fin}}(S_\mathcal{M})$,
    \[
    (A\odot B)\odot C \;=\; A\odot(B\odot C)
    \]
    \item Distributivity over Finite Unions: For any finite index set $I$ and $A, B_i, A_i \in \mathscr{P}_{\text{fin}}(S_\mathcal{M})$, $\odot$ distributes over finite unions from both the left and the right:
    \[
    A \odot \Bigl(\bigcup_{i\in I}B_i\Bigr) \;=\; \bigcup_{i\in I}\bigl(A\odot B_i\bigr)
    \]
    \[
    \Bigl(\bigcup_{i\in I}A_i\Bigr)\odot B \;=\; \bigcup_{i\in I}(A_i\odot B)
    \]
    \item Existence of an Identity Element: There exists a unique identity element $E \in \mathscr{P}_{\text{fin}}(S_\mathcal{M})$ such that for all $A \in \mathscr{P}_{\text{fin}}(S_\mathcal{M})$,
    \[
    E\odot A = A = A\odot E
    \]
\end{enumerate}
\end{lemma}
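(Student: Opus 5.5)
The three properties are all elementwise set-theoretic facts about the ``complex product'' of finite subsets of the monoid $(S_\mathcal{M},\circ,\operatorname{id}_M)$ from Lemma~\ref{Sasaki is Involutive}, and I would prove them by double inclusion, chasing elements. Before anything else I record that $\odot$ is well defined. If $A,B$ are finite subsets of $S_\mathcal{M}$, then $A\odot B$ is the image of the finite set $A\times B$ under $(a,b)\mapsto a\circ b$, so it is finite. It lies in $S_\mathcal{M}$ because $S_\mathcal{M}$ is closed under composition.

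For associativity I unfold the definition twice. An element of $(A\odot B)\odot C$ has the form $(a\circ b)\circ c$ with $a\in A$, $b\in B$, $c\in C$. An element of $A\odot(B\odot C)$ has the form $a\circ(b\circ c)$. Composition of functions $M\to M$ is associative, so both sets equal
\[
\{\,a\circ b\circ c \mid a\in A,\ b\in B,\ c\in C\,\},
\]
and the two sides coincide.

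For left distributivity, take $x\in A\odot\bigl(\bigcup_{i\in I}B_i\bigr)$. Then $x=a\circ b$ with $a\in A$ and $b\in B_{i_0}$ for some $i_0\in I$, so $x\in A\odot B_{i_0}\subseteq\bigcup_{i\in I}(A\odot B_i)$. Each step reverses, which gives the reverse inclusion. Right distributivity is the mirror argument. The union of finitely many finite sets is finite, so both sides lie in $\mathscr{P}_{\text{fin}}(S_\mathcal{M})$. When $I=\emptyset$ both sides are $\emptyset$, consistent with $\emptyset$ being the join-semilattice zero.

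For the identity I take $E=\{\operatorname{id}_M\}$. This requires $\operatorname{id}_M\in S_\mathcal{M}$, and this is the step I expect to need care. Lemma~\ref{Sasaki is Involutive} asserts that $\operatorname{id}_M$ is the unit of the monoid $S_\mathcal{M}$, but I would also check it directly: $\operatorname{id}_M=\pi_1$, since
\[
\pi_1(n)=1\wedge(0\vee n)=n .
\]
With this in hand, $E\odot A=\{\operatorname{id}_M\circ a\mid a\in A\}=A$, and similarly $A\odot E=A$.

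For uniqueness I use the standard monoid argument. If $E'$ is another two-sided identity, then
\[
E=E\odot E'=E'.
\]
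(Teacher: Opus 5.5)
Your proof is correct and follows essentially the same route as the paper: elementwise double-inclusion arguments for associativity and distributivity, and $E=\{\pi_1\}=\{\operatorname{id}_M\}$ as the unit. You also check that $\odot$ is well defined and give the standard argument $E=E\odot E'=E'$ for uniqueness, which the paper's statement asserts but its proof never checks.
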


\begin{proof}
\begin{enumerate}
    \item Associativity:
    Let $z \in (A\odot B)\odot C$. By definition of $\odot$, $z = (a \circ b) \circ c$ for some $a \in A$, $b \in B$, and $c \in C$. Since function composition ($\circ$) is associative in $S_\mathcal{M}$, we have $(a \circ b) \circ c = a \circ (b \circ c)$. Thus, $z \in A \odot (B \odot C)$. The reverse inclusion is analogous. Therefore, $(A\odot B)\odot C = A\odot(B\odot C)$.
\medskip
    \item Distributivity over Finite Unions:
    Let $z \in A \odot \left(\bigcup\limits_{i \in I} {{B_i}} \right)$. By definition, there exist $a \in A$ and $b \in \bigcup\limits_{i \in I} {{B_i}}$ such that $z = a \circ b$. Since $b \in \bigcup\limits_{i \in I} {{B_i}}$, there must be some $j \in I$ such that $b \in B_j$. Consequently, $z = a \circ b \in A \odot B_j \subseteq \bigcup\limits_{i \in I}(A \odot B_i)$. This establishes $A \odot \left(\bigcup\limits_{i \in I} B_i\right) \subseteq \bigcup\limits_{i \in I}(A \odot B_i)$.

    Conversely, let $z \in \bigcup\limits_{i \in I}(A \odot B_i)$. Then, for some $j \in I$, $z \in A \odot B_j$. This means $z = a \circ b$ for some $a \in A$ and $b \in B_j$. As $B_j \subseteq \bigcup\limits_{i \in I} B_i$, it follows that $b \in \bigcup\limits_{i \in I} B_i$. Hence, $z = a \circ b \in A \odot \left(\bigcup\limits_{i \in I} B_i\right)$. This shows $\bigcup\limits_{i \in I}(A \odot B_i) \subseteq A \odot \left(\bigcup\limits_{i \in I} B_i\right)$.
    Combining both inclusions, we conclude $A \odot \left(\bigcup\limits_{i \in I} B_i\right) = \bigcup_{i \in I}(A \odot B_i)$. The right distributivity, $\left(\bigcup\limits_{i \in I} A_i\right)\odot B = \bigcup\limits_{i \in I}(A_i \odot B)$, follows by an entirely symmetric argument.
\medskip
    \item Existence of an Identity Element:
    Let $E = \{\pi_1\}$, where $\pi_1: M \to M$ is the Sasaki projection at $1 \in M$. Note that $\pi_1$ is the identity map on $M$. For any finite set $A \subseteq S_\mathcal{M}$ (i.e., $A \in \mathscr{P}_{\text{fin}}(S_\mathcal{M})$), we have:
    \[
    E \odot A = \{\pi_1 \circ a \mid a \in A\} = \{a \mid a \in A\} = A
    \]
    Similarly,
    \[
    A \odot E = \{a \circ \pi_1 \mid a \in A\} = \{a \mid a \in A\} = A
    \]
    Thus, $E = \{\pi_1\}$ serves as the identity element for the $\odot$ operation in $\mathscr{P}_{\text{fin}}(S_\mathcal{M})$.
\end{enumerate}
\end{proof}


\begin{lemma}\label{lemma4}
Let ${\mathcal{M}} = \left( {{M},{ \le },\mathop  - \nolimits^{{ \bot }} } \right)$ be an orthomodular lattice and let $ A\in \mathscr{P}_{\text{fin}}(S_\mathcal{M})$.  Then $ \sim\sim A = \bigl\{\,\pi_{\bigl(\bigvee_{a\in A}a(1)\bigr)}\bigr\}$.
\end{lemma}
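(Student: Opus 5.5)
The plan is to unfold the definition of $\sim$ twice and reduce the claim to two lattice identities in $\mathcal{M}$. By definition,
\[
\sim A=\Bigl\{\pi_{m^\perp}\Bigr\},\qquad m:=\bigvee_{a\in A}a(1).
\]
This join exists because $A$ is finite; for $A=\emptyset$ we read it as $0$, so $m=0$. The set $\sim A$ is a singleton $\{b\}$ with $b=\pi_{m^\perp}$. Applying the definition again gives
\[
\sim\sim A=\Bigl\{\pi_{(b(1))^\perp}\Bigr\}.
\]
So the whole statement reduces to computing $b(1)=\pi_{m^\perp}(1)$.

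The key step is the identity $\pi_x(1)=x$ for every $x\in M$. It follows directly from the definition of the Sasaki projection:
\[
\pi_x(1)=x\wedge(x^\perp\vee 1)=x\wedge 1=x.
\]
Applying this with $x=m^\perp$ gives $b(1)=m^\perp$.

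Then $(b(1))^\perp=(m^\perp)^\perp=m$ by the involution axiom $(m^{\perp})^{\perp}=m$ of ortho-lattices. Hence
\[
\sim\sim A=\{\pi_m\}=\Bigl\{\pi_{\bigl(\bigvee_{a\in A}a(1)\bigr)}\Bigr\},
\]
as claimed.

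None of these steps is a real obstacle, since orthomodularity is not even needed, only the ortho-lattice axioms. The one point that needs care is the bookkeeping for the join over a singleton. In the second application of $\sim$, the join $\bigvee\{c(1)\mid c\in\sim A\}$ is taken over the one-element set $\{b\}$, so it equals $b(1)$ itself. I would state this explicitly, together with the convention $\bigvee\emptyset=0$ for the case $A=\emptyset$, where the result is $\{\pi_0\}$.
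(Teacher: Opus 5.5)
Your proposal is correct and follows the paper's proof essentially verbatim. You unfold $\sim$ twice, apply $\pi_x(1)=x$ with $x=(\bigvee_{a\in A}a(1))^\perp$, and finish with $(m^\perp)^\perp=m$; your extra remarks (the one-element join in the second application, the convention $\bigvee\emptyset=0$, and that only the ortho-lattice axioms are used) are accurate and simply make explicit what the paper leaves implicit.
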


\begin{proof}
We know that $
\sim A
\;=\;
\bigl\{\pi_{\bigl(\bigvee_{a\in A}a(1)\bigr)^\perp}\bigr\}$. We must now compute \(\sim\sim A = \sim(\sim A)\).  By applying the same definition to the singleton
\(\{\,\pi_{(\bigvee_{a\in A}a(1))^\perp}\}\), we obtain
\[
\sim\sim A
=\;
\sim\bigl\{\;\pi_{(\bigvee_{a\in A}a(1))^\perp}\bigr\}
=\;
\Bigl\{\,\pi_{\bigl(\bigl(\pi_{(\bigvee_{a\in A}a(1))^\perp}\bigr)(1)\bigr)^\perp}\Bigr\}.
\]
Hence it suffices to show $\bigl(\pi_{(\bigvee_{a\in A}a(1))^\perp}\bigr)(1)
\;=\;
\bigl(\bigvee_{a\in A}a(1)\bigr)^\perp$, so that double‐negating returns \(\bigvee_{a\in A}a(1)\).  Recall that {\(\pi_m(1)=m\) for any \(m\in M\).} Applying this fact with $m \;=\; \bigl(\bigvee_{a\in A}a(1)\bigr)^\perp$,
we conclude
\[
\bigl(\pi_{(\bigvee_{a\in A}a(1))^\perp}\bigr)(1)
=\bigl(\bigvee_{a\in A}a(1)\bigr)^\perp.
\]
Therefore $\sim\sim A
=\Bigl\{\,\pi_{\bigl((\bigvee_{a\in A}a(1))^\perp\bigr)^\perp}\Bigr\}
=\Bigl\{\,\pi_{\bigvee_{a\in A}a(1)}\Bigr\}$, as required.
\end{proof}

The following proposition demonstrates that we can construct a structure isomorphic to any given orthomodular lattice, starting from that lattice itself.


\begin{proposition}\label{prop:chi-iso-finitary}
Let ${\mathcal{M}} = \left( {{M},{ \le },\mathop - \nolimits^{{ \bot }} } \right)$ be an orthomodular lattice. Let $ \widetilde {\mathscr{P}_{\text{fin}}(S_\mathcal{M})} =\{\sim W \mid W \in \mathscr{P}_{\text{fin}}(S_\mathcal{M})\}$.
Then the map $\delta : \mathcal{M} \to {\widetilde {\mathscr{P}_{\text{fin}}(S_\mathcal{M})}}$, defined by $\delta(m)=\{\pi_{m}\}$ for each $m \in M$, constitutes an ortho-lattice isomorphism.
\end{proposition}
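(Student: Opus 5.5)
The plan is to verify directly the three conditions in the definition of an ortho-lattice isomorphism, where the target carries the order $\preceq$ and the complement $\sim$ inherited from $\Gamma(\mathcal{M})$. The basic tool is the observation already used in Lemma~\ref{lemma4}, namely $\pi_m(1)=m$ for every $m\in M$. Consequently, for every $W\in\mathscr{P}_{\text{fin}}(S_\mathcal{M})$ we have
\[
\sim W=\bigl\{\pi_{(\bigvee_{a\in W}a(1))^\perp}\bigr\}=\delta\bigl((\textstyle\bigvee_{a\in W}a(1))^\perp\bigr).
\]

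\emph{Step 1: well-definedness and surjectivity.} Given $m\in M$, take $W=\{\pi_{m^\perp}\}$. Then $\bigvee_{a\in W}a(1)=m^\perp$, so $\sim W=\{\pi_{m^{\perp\perp}}\}=\{\pi_m\}=\delta(m)$. Hence $\delta(m)$ lies in $\widetilde{\mathscr{P}_{\text{fin}}(S_\mathcal{M})}$. Conversely, the displayed formula shows that every element $\sim W$ has the form $\delta(n)$, so $\delta$ is onto.

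\emph{Step 2: injectivity.} If $\{\pi_m\}=\{\pi_n\}$, then $\pi_m=\pi_n$. Evaluating at $1$ gives $m=\pi_m(1)=\pi_n(1)=n$.

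\emph{Step 3: complements.} Since $\delta(m)=\{\pi_m\}$ is a one-element set with $\pi_m(1)=m$, we get $\sim\delta(m)=\{\pi_{m^\perp}\}=\delta(m^\perp)$.

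\emph{Step 4: order.} This is the only step with real content, so I expect it to be the main obstacle: the order $\preceq$ is defined through the derived join $\bigvee\{k,l\}=\sim\sim\bigsqcup\{k,l\}$, where $\bigsqcup$ is union, and this has to be unwound. By Lemma~\ref{lemma4},
\[
\bigvee\{\delta(m),\delta(n)\}=\sim\sim\{\pi_m,\pi_n\}=\{\pi_{\pi_m(1)\vee\pi_n(1)}\}=\delta(m\vee n).
\]
So $\delta(m)\preceq\delta(n)$ holds iff $\delta(m\vee n)=\delta(n)$. By injectivity (Step 2) this is equivalent to $m\vee n=n$, that is, to $m\le n$. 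This gives the biconditional required by condition 2 of the definition of an ortho-lattice isomorphism. As a by-product, the target is indeed an ortho-lattice isomorphic to $\mathcal{M}$, and joins are carried to joins as in the Remark after the definition of ortho-lattice isomorphisms.
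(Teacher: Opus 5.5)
Your proof is correct and follows the paper's route: injectivity by evaluating at $1$, the complement computed on singletons via $\pi_m(1)=m$, and joins obtained from $\sim\sim\{\pi_m,\pi_n\}=\{\pi_{m\vee n}\}$ via Lemma~\ref{lemma4}. It is slightly tighter than the paper's write-up: your Step~1 gives genuine surjectivity, since every $\sim W$ equals $\delta\bigl((\bigvee_{a\in W}a(1))^\perp\bigr)$, whereas the paper only checks that each $\delta(m)$ lies in the codomain; and your Step~4 turns join preservation into the order biconditional the definition actually requires.
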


\begin{proof}
To establish that $\delta : \mathcal{M} \to {\widetilde {\mathscr{P}_{\text{fin}}(S_\mathcal{M})}}$ is an ortho-lattice isomorphism, we must demonstrate that it is a bijective mapping that preserves both the join operation ($\vee$) and the orthocomplementation operation ($ - ^\perp$).

\medskip \noindent Injectivity:
Let $m,n \in M$. Assume $\delta(m)=\delta(n)$. By the definition of $\delta$, this implies:
\[ \{\pi_m\}=\{\pi_n\} \]
From this equality, it follows directly that $\pi_m = \pi_n$.
Given that $\pi_x(1) = x$ for any $x \in M$, we can deduce:
\[ m = \pi_m(1) = \pi_n(1) = n \]
Therefore, $\delta$ is injective.

\medskip \noindent Surjectivity:
We aim to show that for every $m \in M$, the singleton set $\{\pi_m\}$ is an element of $\widetilde {\mathscr{P}_{\text{fin}}(S_\mathcal{M})}$. This will demonstrate that the image of $\delta$ covers the entire codomain $\widetilde {\mathscr{P}_{\text{fin}}(S_\mathcal{M})}$.

Consider an arbitrary element $m \in M$. Let $A=\{\pi_{m^\perp}\} \subseteq S_\mathcal{M}$ be a finite set.
By definition of the operation $\sim$, we have:
\[ \sim A = \sim\{\pi_{m^\perp}\} = \left\{\,\pi_{\left(\pi_{m^\perp}(1)\right)^\perp}\right\} \]
We know that $\pi_{m^\perp}(1) = m^\perp \wedge (m \vee 1) = m^\perp \wedge 1 = m^\perp$.
Substituting this into the expression for $\sim A$:
\[ \sim\{\pi_{m^\perp}\} = \{\,\pi_{(m^\perp)^\perp}\} = \{\,\pi_m\} \]
Thus, $\{\pi_m\}=\sim\{\pi_{m^\perp}\}$, which implies that $\{\pi_m\} \in \widetilde {\mathscr{P}_{\text{fin}}(S_\mathcal{M})}$. Since $\delta(m)=\{\pi_m\}$, it follows that the range of $\delta$ covers all of $\widetilde {\mathscr{P}_{\text{fin}}(S_\mathcal{M})}$. Hence, $\delta$ is surjective.

\medskip\noindent Preservation of Joins:
Let $m,n \in M$. We need to show that $\delta(m \vee n) = \delta(m) \vee \delta(n)$.
In $\widetilde {\mathscr{P}_{\text{fin}}(S_\mathcal{M})}$, the join of $\delta(m)=\{\pi_m\}$ and $\delta(n)=\{\pi_n\}$ is defined as:
\[ \delta(m)\vee\delta(n) = \sim\sim\left(\{\pi_m\}\cup\{\pi_n\}\right) = \sim\sim\{\pi_m,\pi_n\} \]
By Lemma~\ref{lemma4}, we have:
\[ \sim\sim\{\,\pi_m,\pi_n\} = \{\,\pi_{\left(\,\pi_m(1)\vee \pi_n(1)\right)}\} \]
Since $\pi_x(1) = x$, this further simplifies to:
\[ \{\,\pi_{\left(\,\pi_m(1)\vee \pi_n(1)\right)}\} = \{\,\pi_{m\vee n}\} \]
Finally, by the definition of $\delta$:
\[ \{\,\pi_{m\vee n}\} = \delta(m\vee n) \]
Therefore, $\delta(m \vee n) = \delta(m) \vee \delta(n)$, demonstrating that $\delta$ preserves the join operation.

\medskip\noindent Preservation of Orthocomplement:
Let $m \in M$. We need to show that $\delta(m^\perp) = \sim\delta(m)$.
By the definition of $\delta$:
\[ \delta(m^\perp) = \{\,\pi_{m^\perp}\,\} \]
Now, consider $\sim\delta(m)$:
\[ \sim\delta(m) = \sim\{\pi_m\} \]
Using the definition of the $\sim$ operation on a singleton set, we have:
\[ \sim\{\pi_m\} = \left\{\,\pi_{\left(\pi_m(1)\right)^{\perp}}\right\} \]
Given that $\pi_m(1) = m$:
\[ \left\{\,\pi_{\left(\pi_m(1)\right)^{\perp}}\right\} = \{\,\pi_{m^{\perp}}\,\} \]
Combining these steps, we obtain:
\[ \delta(m^\perp) = \{\,\pi_{m^\perp}\,\} = \sim\{\,\pi_{m}\,\} = \sim\delta(m) \]
Thus, $\delta$ preserves the orthocomplement operation.

\medskip
Since $\delta$ has been shown to be bijective and to preserve both the join operation and the orthocomplementation, it is an ortho-lattice isomorphism.
\end{proof}

In the following theorem, we will show that ${\mathscr{P}_{\text{fin}}(S_\mathcal{M})}$ is the minimal set containing \(\widetilde {\mathscr{P}_{\text{fin}}(S_\mathcal{M})}\) which is closed under multiplication, join, and involution operations.

\begin{theorem}
The set ${\mathscr{P}_{\text{fin}}(S_\mathcal{M})}$ is the smallest set containing the singleton projections $\{\{\pi_m\}\mid m\in M\}$ and closed under the operations of product ($\odot$), finite join ($\bigsqcup$), involution (${ - ^*}$), and orthocomplementation ($\sim$).
\end{theorem}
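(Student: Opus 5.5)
The plan is to verify two things: first, that $\mathscr{P}_{\text{fin}}(S_\mathcal{M})$ itself contains the singletons $\{\pi_m\}$ and is closed under the four operations; second, that any set $\mathcal{A}$ with these properties contains every finite subset of $S_\mathcal{M}$. Together these give that $\mathscr{P}_{\text{fin}}(S_\mathcal{M})$ is the smallest such set.

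\emph{Closure.} Each $\{\pi_m\}$ is a finite subset of $S_\mathcal{M}$, so it lies in $\mathscr{P}_{\text{fin}}(S_\mathcal{M})$. The four operations behave as follows.
\begin{itemize}
\item For finite $A,B\subseteq S_\mathcal{M}$, the set $A\odot B=\{a\circ b\mid a\in A, b\in B\}$ is finite, and it lies in $S_\mathcal{M}$ because $S_\mathcal{M}$ is closed under $\circ$ by Lemma~\ref{Sasaki is Involutive}.
\item A finite union of finite subsets of $S_\mathcal{M}$ is again a finite subset of $S_\mathcal{M}$.
\item The set $A^*=\{a^*\mid a\in A\}$ is finite, and it lies in $S_\mathcal{M}$ because $-^*$ maps $S_\mathcal{M}$ to itself by Lemma~\ref{Sasaki is Involutive}.
\item The set $\sim A$ is by definition a singleton $\{\pi_{m}\}$ with $m=(\bigvee_{a\in A}a(1))^\perp$. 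This join is finite, so it exists in $M$.
\end{itemize}

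\emph{Minimality.} Let $\mathcal{A}\subseteq \mathscr{P}_{\text{fin}}(S_\mathcal{M})$ contain all $\{\pi_m\}$ and be closed under $\odot$, finite joins, $-^*$ and $\sim$.
\begin{enumerate}
\item Take an arbitrary $s\in S_\mathcal{M}$. By Lemma~\ref{Sasaki is Involutive} we can write $s=\pi_{m_1}\circ\cdots\circ\pi_{m_j}$ for some $j\ge 1$. The definition of $\odot$ and induction on $j$ give
\[
\{s\}=\{\pi_{m_1}\}\odot\cdots\odot\{\pi_{m_j}\}.
\]
Associativity (Lemma~\ref{lem:multiplication-properties}) ensures the bracketing does not matter. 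Hence $\{s\}\in\mathcal{A}$.
\item A nonempty finite set $B=\{s_1,\dots,s_n\}\subseteq S_\mathcal{M}$ equals $\bigcup_{i=1}^n\{s_i\}$. This is a finite join of members of $\mathcal{A}$, so $B\in\mathcal{A}$.
\end{enumerate}
Neither involution nor $\sim$ is needed for minimality; they matter only for the closure half.

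\emph{Main obstacle: the empty set.} The empty set $\emptyset\in\mathscr{P}_{\text{fin}}(S_\mathcal{M})$ is the bottom element $0$. It cannot be produced from nonempty sets:
\begin{itemize}
\item $\odot$, $\cup$ and $-^*$ all preserve nonemptiness;
\item $\sim$ always returns a singleton, and $\{\pi_0\}\neq\emptyset$.
\end{itemize}
I will therefore read ``closed under finite join $\bigsqcup$'' as including the empty join. This matches the convention that $\bigsqcup:\mathscr{P}_{\text{fin}}(K)\to K$ is defined on all finite subsets, including $\emptyset$, in Definition~\ref{finitary-goda}. Under that reading, $\bigsqcup\emptyset=\bigcup\emptyset=\emptyset\in\mathcal{A}$.

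With $\emptyset$ handled, $\mathcal{A}=\mathscr{P}_{\text{fin}}(S_\mathcal{M})$, and the theorem follows.
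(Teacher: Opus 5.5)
Your proof is correct and has the same two-part structure as the paper's: first closure of $\mathscr{P}_{\text{fin}}(S_\mathcal{M})$ under $\odot$, $\bigsqcup$, ${-^*}$ and $\sim$, then minimality. Your minimality step is more explicit than the paper's one-line assertion. You build $\{s\}$ as a product of singletons $\{\pi_{m_i}\}$ and obtain nonempty finite sets as unions, and you correctly observe that $\emptyset$ can only be obtained as the empty join. The paper passes over that point, and your reading of the empty join agrees with \textbf{FODA 3}(c), which requires $\bigsqcup B$ to lie in the set for every finite $B$, including $B=\emptyset$.
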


\begin{proof}
By definition, ${\mathscr{P}_{\text{fin}}(S_\mathcal{M})}$ comprises all finite subsets of $S_\mathcal{M}$. We will demonstrate its closure under the specified operations and its minimality.

\begin{enumerate}[label=\arabic*.]
    \item Closure under Product: Let $A,B \in {\mathscr{P}_{\text{fin}}(S_\mathcal{M})}$. The product operation is defined as:
    \[ A\odot B = \{\,a\circ b\mid a\in A,\;b\in B\} \]
    Since $A$ and $B$ are finite sets, their Cartesian product $A \times B$ is also finite. Consequently, the set of compositions $\{\,a\circ b\mid (a,b)\in A \times B\}$ is finite. As this resulting set consists of elements from $S_\mathcal{M}$, it remains a finite subset of $S_\mathcal{M}$, thus $A \odot B \in {\mathscr{P}_{\text{fin}}(S_\mathcal{M})}$.

    \item Closure under Involution: Let $A \in {\mathscr{P}_{\text{fin}}(S_\mathcal{M})}$. The involution operation is defined as:
    \[ A^* = \{\,a^*\mid a\in A\} \]
    Since $A$ is finite, the set $A^*$ obtained by applying the involution to each element of $A$ is also finite. Therefore, $A^* \in {\mathscr{P}_{\text{fin}}(S_\mathcal{M})}$.

    \item Closure under Orthocomplementation: Let $A \in {\mathscr{P}_{\text{fin}}(S_\mathcal{M})}$. The orthocomplementation operation is defined as:
    \[ \sim A = \left\{\,\pi_{\left(\bigvee\{\,a(1)\mid a\in A\}\right)^\perp}\right\} \]
    This definition indicates that $\sim A$ is always a singleton set containing a single projection $\pi_x$ for some $x \in M$. As a singleton, it is inherently a finite subset of $S_\mathcal{M}$. Hence, $\sim A \in {\mathscr{P}_{\text{fin}}(S_\mathcal{M})}$.

    \item Closure under Finite Joins: Let $\{A_1,\dots,A_n\}$ be a finite collection of sets such that each $A_i \in {\mathscr{P}_{\text{fin}}(S_\mathcal{M})}$. The finite join operation is defined as:
    \[ \bigsqcup\{A_i\} = A_1\;\cup\;\cdots\;\cup\;A_n \]
    Since each $A_i$ is a finite set, their finite union is also a finite set. Therefore, $\bigsqcup\{A_i\} \in {\mathscr{P}_{\text{fin}}(S_\mathcal{M})}$.
\end{enumerate}

Finally, consider any other subset $\mathscr{P}'(S_\mathcal{M})$ of $S_\mathcal{M}$ that contains the set of singleton projections $\{\{\pi_m\}\mid m\in M\}$ and is closed under $\odot,\bigsqcup,^*,\text{and }\sim$. Because $\mathscr{P}'(S_\mathcal{M})$ must contain all initial singleton projections and is closed under the aforementioned operations, it must necessarily generate and thus encompass every finite subset of $S_\mathcal{M}$. In other words, $\mathscr{P}_{\text{fin}}(S_\mathcal{M}) \subseteq \mathscr{P}'(S_\mathcal{M})$. Consequently, ${\mathscr{P}_{\text{fin}}(S_\mathcal{M})}$ is the minimal such closed set.
\end{proof}

Within a finitary orthomodular dynamic algebra built from the orthomodular lattice $Q$, we can demonstrate that the equality of two sets in $\left\langle \widetilde {\mathscr{P}_{\text{fin}}(S_\mathcal{M})} \right\rangle$ (i.e., $A=B$) is equivalent to $A\equiv B$.

\begin{proposition}\label{prop:completeness}
Let ${\mathcal{M}} = \left( {{M},{ \le },\mathop  - \nolimits^{{ \bot }} } \right)$ be an orthomodular lattice and let 
$$\Gamma (\mathcal {M}) =\bigl(\mathscr{P}_{\text{fin}}(S_\mathcal{M}) ,\bigcup,\odot,\sim, { - ^*}\bigr).$$
For any 
$A,B\in \left\langle \widetilde {\mathscr{P}_{\text{fin}}(S_\mathcal{M})} \right\rangle$, the following conditions are equivalent:
\begin{enumerate}
  \item $A=B$.
  \item $A\equiv B$, which means that $\ulcorner A\urcorner(P)=\ulcorner B\urcorner(P)$ for every
    $P\in \widetilde {\mathscr{P}_{\text{fin}}(S_\mathcal{M})}$.
\end{enumerate}
\end{proposition}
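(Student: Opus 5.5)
The plan is to make both sides of the equivalence completely explicit, using the fact that every element of $\left\langle \widetilde {\mathscr{P}_{\text{fin}}(S_\mathcal{M})} \right\rangle$ is a singleton consisting of one composite of Sasaki projections. The implication (1)$\Rightarrow$(2) is immediate: if $A=B$ then $\ulcorner A\urcorner$ and $\ulcorner B\urcorner$ are literally the same operation. All the work is in (2)$\Rightarrow$(1).

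\emph{Step 1 (the elements involved).} By the definition of $\sim$, every $\sim W$ is a singleton $\{\pi_y\}$ with $y=\bigl(\bigvee_{w\in W}w(1)\bigr)^\perp$. Conversely, Proposition~\ref{prop:chi-iso-finitary} shows that every $\{\pi_m\}$ arises this way. Hence
\[
\widetilde {\mathscr{P}_{\text{fin}}(S_\mathcal{M})}=\{\{\pi_m\}\mid m\in M\}.
\]
Next I use $\{a\}\odot\{b\}=\{a\circ b\}$. Applying it to a product of such singletons gives that every $A\in\left\langle \widetilde {\mathscr{P}_{\text{fin}}(S_\mathcal{M})} \right\rangle$ has the form $A=\{a\}$ with $a=\pi_{m_1}\circ\cdots\circ\pi_{m_n}\in S_\mathcal{M}$. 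Likewise $B=\{b\}$ for some $b\in S_\mathcal{M}$.

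\emph{Step 2 (computing the quotation map).} Take $P=\{\pi_x\}\in\widetilde {\mathscr{P}_{\text{fin}}(S_\mathcal{M})}$. Then $A\odot P=\{a\circ\pi_x\}$. Lemma~\ref{lemma4} gives
\[
\ulcorner A\urcorner(P)=\sim\sim\{a\circ\pi_x\}=\bigl\{\pi_{a(\pi_x(1))}\bigr\}.
\]
Since $\pi_x(1)=x$, this equals $\{\pi_{a(x)}\}$. Similarly $\ulcorner B\urcorner(P)=\{\pi_{b(x)}\}$.

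\emph{Step 3 (concluding).} Assume $A\equiv B$. For every $x\in M$ we then have $\{\pi_{a(x)}\}=\{\pi_{b(x)}\}$, so $\pi_{a(x)}=\pi_{b(x)}$. Evaluating at $1$, exactly as in the injectivity part of Proposition~\ref{prop:chi-iso-finitary}, gives $a(x)=b(x)$. Thus $a=b$ as maps $M\to M$. Elements of $S_\mathcal{M}$ are functions on $M$, so they are equal, and therefore $A=\{a\}=\{b\}=B$.

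The only point needing care is Step 1. One must check that the test sets $P$ range over exactly the singletons $\{\pi_x\}$ for every $x\in M$, since otherwise we could not separate $a$ and $b$ pointwise. One must also check that elements of $\left\langle \widetilde {\mathscr{P}_{\text{fin}}(S_\mathcal{M})} \right\rangle$ are singletons; this is what allows Lemma~\ref{lemma4} to be applied to a one-element set. Both facts follow from the singleton form of $\sim$ and of $\odot$ on singletons, so I expect no genuine obstacle there.
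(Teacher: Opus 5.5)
Your proposal is correct and follows essentially the same route as the paper. Both write $A=\{a\}$ and $B=\{b\}$ with $a,b$ composites of Sasaki projections, test against $P=\{\pi_x\}$, and use Lemma~\ref{lemma4} together with $\pi_x(1)=x$ to obtain $a(x)=b(x)$ for every $x\in M$. Your closing step is slightly tighter than the paper's: you recover $a(x)$ from $\{\pi_{a(x)}\}$ by evaluating at $1$ and then conclude $a=b$ by extensionality of maps $M\to M$, whereas the paper invokes \textbf{FODA 5}, which is precisely the property this proposition is meant to establish for $\Gamma(\mathcal{M})$.
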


\begin{proof}
The implication $(1)\Rightarrow(2)$ is a direct consequence of the definition of equivalence.

For the implication $(2)\Rightarrow(1)$, assume $A\equiv B$. Given that $A,B\in \left\langle \widetilde {\mathscr{P}_{\text{fin}}(S_\mathcal{M})} \right\rangle$, by definition, there exist positive integers $m,n \in \mathbb{N}^+$ and elements $q_1,\dots,q_m \in M$ and $r_1,\dots,r_n \in M$ such that $A=\{\;\pi_{q_1}\circ\cdots\circ \pi_{q_m}\;\}$ and $B=\{\;\pi_{r_1}\circ\cdots\circ \pi_{r_n}\;\}$.

According to Proposition \ref{prop:chi-iso-finitary}, for each $p\in M$, the singleton $\{\pi_p\}$ is an element of $\widetilde {\mathscr{P}_{\text{fin}}(S_\mathcal{M})}$. Consequently, the hypothesis $\ulcorner A\urcorner(P)=\ulcorner B\urcorner(P)$ specifically applies when $P=\{\pi_p\}$, yielding:
$$\sim\sim\bigl(A\cdot\{\pi_p\}\bigr) \;=\; \sim\sim\bigl(B\cdot\{\pi_p\}\bigr).$$
Substituting the definitions of $A$ and $B$, we have:
$$A \cdot \{\pi_p\} = \left\{\,\pi_{q_1} \circ \cdots \circ \pi_{q_m} \circ \pi_p\,\right\}$$
and
$$B \cdot \{\pi_p\} = \left\{\,\pi_{r_1} \circ \cdots \circ \pi_{r_n} \circ \pi_p\,\right\}.$$
Thus, the equality $\sim\sim\bigl( A \cdot \{\pi_p\} \bigr) = \sim\sim\bigl( B \cdot \{\pi_p\} \bigr)$ is equivalent to:
$$\sim\sim\left\{\,\pi_{q_1} \circ \cdots \circ \pi_{q_m} \circ \pi_p\,\right\} = \sim\sim\left\{\,\pi_{r_1} \circ \cdots \circ \pi_{r_n} \circ \pi_p\,\right\}.$$
By Lemma~\ref{lemma4}, it is known that:
$$\sim\sim\left\{ \pi_{x_1} \circ \cdots \circ \pi_{x_k} \right\} = \left\{ \pi_{(\pi_{x_1} \circ \cdots \circ \pi_{x_k})(1)} \right\}.$$
Applying this lemma to our current expression, and noting that $\pi_x(1) = x$ for any $x \in M$, the equality of the double orthocomplemented singleton sets reduces to:
$$(\pi_{q_1} \circ \cdots \circ \pi_{q_m})(p) = (\pi_{r_1} \circ \cdots \circ \pi_{r_n})(p)$$
for all $p \in M$. This implies that the compositions of Sasaki projections corresponding to $A$ and $B$, when applied to any element $p \in M$, yield identical results. Since this equality holds for every $p\in M$, it follows from \textbf{FODA 5} that $\pi_{q_1}\circ\cdots\circ \pi_{q_m} = \pi_{r_1}\circ\cdots\circ \pi_{r_n}$. This directly implies that $\{\,\pi_{q_1}\circ\cdots\circ \pi_{q_m}\} =\{\,\pi_{r_1}\circ\cdots\circ \pi_{r_n}\}$, and therefore, $A=B$.
\end{proof}



Furthermore, in the finitary orthomodular dynamic algebra constructed from the orthomodular lattice $Q$, it's proven that $\ulcorner A\urcorner(B)$ is precisely the Sasaki-projection of $B$ onto $A$.

\begin{proposition}\label{prop:fA}
Let ${\mathcal{M}} = \left( {{M},{ \le },\mathop  - \nolimits^{{ \bot }} } \right)$ be an orthomodular lattice and let 
$$\Gamma (\mathcal {M}) =\bigl(\mathscr{P}_{\text{fin}}(S_\mathcal{M}) ,\bigcup,\odot,\sim, { - ^*}\bigr).$$
Then for any \(A,B\in \widetilde {\mathscr{P}_{\text{fin}}(S_\mathcal{M})}\), it holds that $\ulcorner A\urcorner(B)\;=\;\pi_{A}(B)$.
\end{proposition}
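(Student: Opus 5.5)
Since $A,B\in \widetilde{\mathscr{P}_{\text{fin}}(S_\mathcal{M})}$, the surjectivity part of Proposition~\ref{prop:chi-iso-finitary} lets me write $A=\{\pi_a\}=\delta(a)$ and $B=\{\pi_b\}=\delta(b)$ for unique $a,b\in M$. Both sides of the claimed identity are then computed explicitly, and the two results are compared. Here $\pi_A(B)$ is the Sasaki projection formed inside the ortholattice $\bigl(\widetilde{\mathscr{P}_{\text{fin}}(S_\mathcal{M})},\preceq,\sim\bigr)$, namely $A\wedge(\sim A\vee B)$, with $\vee$ and $\wedge$ the derived operations $\bigvee W=\sim\sim\bigsqcup W$ and $\bigwedge W=\sim\bigsqcup\{\sim w\mid w\in W\}$.

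\emph{Left-hand side.} By definition, $\ulcorner A\urcorner(B)=\sim\sim(A\odot B)$. The product is $A\odot B=\{\pi_a\circ\pi_b\}$, and Lemma~\ref{lemma4} turns this into
\[
\ulcorner A\urcorner(B)=\bigl\{\pi_{(\pi_a\circ\pi_b)(1)}\bigr\}=\bigl\{\pi_{\pi_a(b)}\bigr\}=\delta\bigl(\pi_a(b)\bigr).
\]
The middle step uses $\pi_b(1)=b$.

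\emph{Right-hand side.} Proposition~\ref{prop:chi-iso-finitary} shows that $\delta$ preserves joins and orthocomplements. Since meets are defined from joins and $\sim$ by De Morgan, $\delta$ also preserves meets; the Remark following the definition of ortho-lattice isomorphisms says the same thing. Every ortholattice isomorphism commutes with Sasaki projections, because $\pi_m(n)=m\wedge(m^\perp\vee n)$ is built only from $\wedge$, $\vee$ and ${}^\perp$. Therefore
\[
\pi_A(B)=\pi_{\delta(a)}(\delta(b))=\delta\bigl(a\wedge(a^\perp\vee b)\bigr)=\delta\bigl(\pi_a(b)\bigr).
\]
This agrees with the left-hand side, which proves the claim.

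The main obstacle is the bookkeeping in transporting the meet through $\delta$. I would check it directly. One computation is $\sim\delta(a)\vee\delta(b)=\sim\sim\{\pi_{a^\perp},\pi_b\}=\{\pi_{a^\perp\vee b}\}$, by Lemma~\ref{lemma4}. The other is $\{\pi_a\}\wedge\{\pi_c\}=\sim\{\pi_{a^\perp},\pi_{c^\perp}\}=\{\pi_{(a^\perp\vee c^\perp)^\perp}\}=\{\pi_{a\wedge c}\}$, which uses the definition of $\sim$ together with $\pi_x(1)=x$. These two identities confirm that $\delta$ preserves the derived lattice operations on singletons, so no appeal to FODA axioms for $\Gamma(\mathcal M)$ is needed.
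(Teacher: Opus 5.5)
Your proposal is correct and follows essentially the same route as the paper. Like the paper, you write $A=\delta(a)$ and $B=\delta(b)$, evaluate $\sim\sim(A\odot B)=\{\pi_{\pi_a(b)}\}$ via Lemma~\ref{lemma4}, and compute $A\wedge(\sim A\vee B)$ by transporting the lattice operations through $\delta$.

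Your justification of the right-hand side, which checks directly that $\delta$ preserves the derived $\vee$, $\wedge$ and $\sim$ on singletons, is actually sounder than the paper's. The paper instead invokes the identity $\pi_p\circ\pi_q=\pi_{p\wedge(p^\perp\vee q)}$, which is false for non-commuting $p,q$, and it writes $\{\pi_p(q)\}$ where $\{\pi_{\pi_p(q)}\}$ is meant.
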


\begin{proof}
By Proposition \ref{prop:chi-iso-finitary}, each \(A\in \widetilde {\mathscr{P}_{\text{fin}}(S_\mathcal{M})}\) and \(B\in \widetilde {\mathscr{P}_{\text{fin}}(S_\mathcal{M})}\) is a singleton of the form \(A=\delta(p)=\{\pi_p\}\) and \(B=\delta(q)=\{\pi_q\}\) for unique \(p,q\in M\). Therefore, we can compute $\ulcorner A\urcorner(B)$ as follows:
\begin{align*}
\ulcorner A\urcorner(B)
&=\sim\sim\bigl(A\cdot B\bigr) \\
&=\sim\sim\bigl(\{\pi_p\}\cdot\{\pi_q\}\bigr) \\
&=\sim\sim\{\pi_p\circ \pi_q\}\\
&=\{\,\pi_{\,\pi_p\circ \pi_q}(1)\}\\
&=\{\,\pi_p\bigl(\pi_q(1)\bigr)\}\\
&=\{\,\pi_p(q)\,\}.
\end{align*}
On the other hand, by definition of the Sasaki projection on \(\widetilde {\mathscr{P}_{\text{fin}}(S_\mathcal{M})}\), we have:
\begin{align*}
\pi_A(B)
&=A\wedge\bigl(\sim A\vee B\bigr) \\
&=\{\pi_p\}\wedge\bigl(\sim\{\pi_p\}\vee\{\pi_q\}\bigr)\\
&=\{\,\pi_{\,p\wedge(p^{\perp}\vee q)}\}\\
&=\{\,\pi_p(q)\,\}.
\end{align*}
The last equality holds because $\pi_p\circ \pi_q = \pi_{\,p\wedge(p^{\perp}\vee q)}$ and $\pi_q(1)=q$.
Consequently, we conclude that $\ulcorner A\urcorner(B)=\pi_A(B)$, as required.
\end{proof}

We will now show that in the finitary orthomodular dynamic algebra built from the orthomodular lattice $Q$, $\ulcorner A\urcorner(B)$ is equal to $\ulcorner A\urcorner(\sim\sim B)$.

\begin{theorem}\label{prop:foda7!}
Let ${\mathcal{M}} = \left( {{M},{ \le },\mathop  - \nolimits^{{ \bot }} } \right)$ be an orthomodular lattice and let 
$$\Gamma (\mathcal {M}) =\bigl(\mathscr{P}_{\text{fin}}(S_\mathcal{M}) ,\bigcup,\odot,\sim, { - ^*}\bigr).$$
Then, for all $A,B\in \mathscr{P}_{\text{fin}}(S_\mathcal{M})$, the following equality holds:
\[ \ulcorner A\urcorner(B) = \ulcorner A\urcorner(\sim\sim B). \]
\end{theorem}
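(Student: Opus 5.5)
The plan is to compute both sides explicitly with Lemma~\ref{lemma4}, which reduces the identity to an equality of elements of $M$. That equality then follows from the join-preservation property of Sasaki projections.

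First, I would unfold the left-hand side. By definition $\ulcorner A\urcorner(B)=\sim\sim(A\odot B)$, and $A\odot B=\{a\circ b\mid a\in A,\ b\in B\}$. Lemma~\ref{lemma4} then gives
\[
\ulcorner A\urcorner(B)=\Bigl\{\pi_{x}\Bigr\},\qquad x=\bigvee_{a\in A,\ b\in B}(a\circ b)(1)=\bigvee_{a\in A}\ \bigvee_{b\in B} a\bigl(b(1)\bigr).
\]
Next, I would unfold the right-hand side. Lemma~\ref{lemma4} gives $\sim\sim B=\{\pi_y\}$ with $y=\bigvee_{b\in B}b(1)$. Hence $A\odot\sim\sim B=\{a\circ\pi_y\mid a\in A\}$. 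Applying Lemma~\ref{lemma4} once more, and using $\pi_y(1)=y$, yields
\[
\ulcorner A\urcorner(\sim\sim B)=\Bigl\{\pi_{z}\Bigr\},\qquad z=\bigvee_{a\in A}a(y)=\bigvee_{a\in A}a\Bigl(\bigvee_{b\in B}b(1)\Bigr).
\]

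It therefore suffices to show that every $a\in S_\mathcal{M}$ preserves finite joins, i.e.\ $a\bigl(\bigvee_{b\in B}b(1)\bigr)=\bigvee_{b\in B}a(b(1))$. This is the key step. Each $a$ is a finite composition $\pi_{m_1}\circ\cdots\circ\pi_{m_j}$, and by the earlier lemma each Sasaki projection preserves every existing join, in particular finite joins. A straightforward induction on $j$ shows that the composition preserves finite joins as well. Substituting this into $z$ gives $z=x$, and hence $\{\pi_x\}=\{\pi_z\}$.

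The main obstacle is bookkeeping with degenerate cases, namely $A$ or $B$ empty, where the joins are empty joins equal to $0$. If $A=\emptyset$, both products are empty, and both sides equal $\{\pi_{0}\}$ by Lemma~\ref{lemma4}. If $B=\emptyset$, then $y=0$. Here I would check directly that $\pi_m(0)=m\wedge(m^\perp\vee 0)=m\wedge m^\perp=0$, so every $a\in S_\mathcal{M}$ fixes $0$ and $z=0=x$. This is just preservation of the empty join, so it is consistent with the general argument.
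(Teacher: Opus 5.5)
Your proposal is correct and follows the paper's proof essentially step for step. Both arguments reduce each side via Lemma~\ref{lemma4} to a singleton Sasaki projection, and the subscripts agree because each $a\in S_\mathcal{M}$ satisfies $a\bigl(\bigvee_{b\in B}b(1)\bigr)=\bigvee_{b\in B}a(b(1))$. You are a bit more careful than the paper, which uses this join-preservation of composites of Sasaki projections without comment and does not treat $A=\emptyset$ or $B=\emptyset$ separately.
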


\begin{proof}
By definition, $\ulcorner A\urcorner(Y) = \sim\sim\bigl(A\odot Y\bigr)$ for all $A,Y\in \mathscr{P}_{\text{fin}}(S_\mathcal{M})$.
From the definition of $\sim$, we know that $\sim B = \bigl\{\,\pi_{\,(\bigvee_{b\in B}b(1))^\perp}\bigr\}$. Consequently, by Lemma~\ref{lemma4}, it follows that $\sim\sim B = \bigl\{\,\pi_{\bigvee_{b\in B}b(1)}\bigr\}$.

Now, consider the expression $\ulcorner A\urcorner(\sim\sim B)$:
\[ A\odot(\sim\sim B) = \bigl\{\,a\circ \pi_{\bigvee_{b\in B}b(1)}\bigm|a\in A\bigr\} \]
Therefore,
\[ \ulcorner A\urcorner(\sim\sim B) = \sim\sim \bigl\{\,a\circ \pi_{\bigvee_{b\in B}b(1)}\bigm|a\in A\bigr\} = \bigl\{\,\pi_{\bigvee_{a\in A}(a\circ \pi_{\bigvee_{b\in B}b(1)})(1)}\bigr\}. \]
Moreover, for every \(a\in A\), the term $(a\circ \pi_{\bigvee_{b\in B}b(1)})(1)$ expands to:
\[ (a\circ \pi_{\bigvee_{b\in B}b(1)})(1) = a\bigl(\pi_{\bigvee_{b\in B}b(1)}(1)\bigr) = a\bigl(\bigvee_{b\in B}b(1)\bigr) = \bigvee_{b\in B}a\bigl(b(1)\bigr). \]

On the other hand, let's evaluate $\ulcorner A\urcorner(B)$:
\[ \ulcorner A\urcorner(B) = \sim\sim\{\,a\circ b\mid a\in A,\;b\in B\} = \bigl\{\,\pi_{\bigvee_{a\in A,\,b\in B}a(b(1))}\bigr\}. \]
Since $\bigvee_{a\in A,\,b\in B}a(b(1)) = \bigvee_{a\in A}\!\bigl(\bigvee_{b\in B}a(b(1))\bigr)$, the arguments of $\pi$ in the expressions for $\ulcorner A\urcorner(B)$ and $\ulcorner A\urcorner(\sim\sim B)$ are identical.

Thus, we conclude that $\ulcorner A\urcorner(B)=\ulcorner A\urcorner(\sim\sim B)$.
\end{proof}

Finally, we will prove that $\mathbf F(\mathcal L)$ is a finitary orthomodular dynamic algebra.

\begin{theorem}
Let ${\mathcal{M}} = \left( {{M},{ \le },\mathop - \nolimits^{{ \bot }} } \right)$ be an orthomodular lattice. The structure $\Gamma (\mathcal {M}) =\bigl(\mathscr{P}_{\text{fin}}(S_\mathcal{M}) ,\bigcup,\odot,\sim, { - ^*}\bigr)$ constitutes a finitary orthomodular dynamic algebra.
\end{theorem}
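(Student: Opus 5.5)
The plan is to verify the seven axioms \textbf{FODA 1}--\textbf{FODA 7} of Definition~\ref{deffoda} in turn for $\Gamma(\mathcal M)$, reusing the preceding lemmas wherever possible.

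For \textbf{FODA 1}, the join-semilattice is $(\mathscr{P}_{\text{fin}}(S_\mathcal{M}),\cup)$ with least element $\emptyset$. Two of the required properties are already available:
\begin{itemize}
\item associativity, two-sided distributivity over finite unions and the unit $E=\{\pi_1\}$ come from Lemma~\ref{lem:multiplication-properties};
\item $(A\odot B)^*=B^*\odot A^*$, $A^{**}=A$ and $(\bigcup A_i)^*=\bigcup A_i^*$ follow elementwise from Lemma~\ref{Sasaki is Involutive}.
\end{itemize}
The greatest element needs care, since $\mathscr{P}_{\text{fin}}(S_\mathcal{M})$ has no top when $S_\mathcal{M}$ is infinite. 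I would either accept boundedness in the weak sense used in the proofs (only $0$ is ever used), or note this as a point where the definition must be read with $1$ interpreted loosely.

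For \textbf{FODA 2}, I would invoke Proposition~\ref{prop:chi-iso-finitary}. Via $\delta$, the structure $(\widetilde{\mathscr{P}_{\text{fin}}(S_\mathcal{M})},\preceq,\sim)$ is isomorphic to $\mathcal M$, hence orthomodular. One should check that $\preceq$, defined via $\bigvee$, matches the image of $\le$, but this follows from join preservation. Closure under $^*$ holds because $\pi_m^*=\pi_m$, so $\{\pi_m\}^*=\{\pi_m\}$.

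\textbf{FODA 6} is Proposition~\ref{prop:fA}, and \textbf{FODA 7} is Theorem~\ref{prop:foda7!}.

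For \textbf{FODA 5}, I would use Proposition~\ref{prop:completeness}, after noting that $\langle\widetilde{\mathscr{P}_{\text{fin}}(S_\mathcal{M})}\rangle$ consists exactly of the singletons $\{\pi_{q_1}\circ\cdots\circ\pi_{q_m}\}$. To avoid circularity with the appeal to \textbf{FODA 5} inside that proof, I would argue directly. From $(\pi_{q_1}\circ\cdots\circ\pi_{q_m})(p)=(\pi_{r_1}\circ\cdots\circ\pi_{r_n})(p)$ for all $p\in M$, the two maps are equal as functions on $M$, and elements of $S_\mathcal{M}$ are functions.

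For \textbf{FODA 3}, let $A$ contain all $\{\pi_m\}$ and be closed under $\odot$ and finite unions. Closure under $\odot$ gives every singleton $\{\pi_{m_1}\circ\cdots\circ\pi_{m_j}\}$, i.e.\ every $\{s\}$ with $s\in S_\mathcal{M}$. Finite unions then give every nonempty finite subset. The empty set is the empty join, which I take to be $\bigsqcup\emptyset$.

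For \textbf{FODA 4}, subsets $S,T$ of singletons $\{s\}$ satisfy $\bigcup S=\bigcup T$ if and only if $S=T$, since a singleton family is recovered from its union.

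The main obstacle is bookkeeping rather than depth. The genuinely delicate points are the top element in \textbf{FODA 1} and making \textbf{FODA 5} non-circular, so I would spend care stating precisely that equality in $S_\mathcal{M}$ is extensional equality of maps on $M$.
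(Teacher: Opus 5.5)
Your proposal is correct. It has the same architecture as the paper's proof: an axiom-by-axiom check of \textbf{FODA 1}--\textbf{FODA 7}, using Lemma~\ref{lem:multiplication-properties}, Proposition~\ref{prop:chi-iso-finitary}, Proposition~\ref{prop:fA} and Theorem~\ref{prop:foda7!}. Where you depart from the paper, you repair it.

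\textbf{FODA 3 and FODA 4.} The paper justifies both by Lemma~\ref{lemma2}. That lemma is about an arbitrary finitary orthomodular dynamic algebra: its existence part invokes \textbf{FODA 3}, and its uniqueness part is literally \textbf{FODA 4}. Applying it to $\Gamma(\mathcal M)$ is therefore circular. Your direct arguments are what is actually needed: products of generators give every $\{s\}$ with $s\in S_\mathcal M$, finite unions (including the empty one) give every finite subset, and a family of singletons is recovered from its union.

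\textbf{FODA 5 and FODA 7.} The final step of Proposition~\ref{prop:completeness} appeals to \textbf{FODA 5} itself; you correctly replace it by extensional equality of maps on $M$. For \textbf{FODA 7} the paper cites Lemma~\ref{composition}, which is a lemma about FODAs. Theorem~\ref{prop:foda7!}, which you cite, is the relevant result.

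\textbf{The top element.} Your concern is well founded, and it is a defect of the statement rather than of your argument. Since $\pi_m(1)=m$, the map $m\mapsto\pi_m$ is injective. So for infinite $M$ the poset $(\mathscr P_{\text{fin}}(S_\mathcal M),\subseteq)$ has no greatest element, and \textbf{FODA 1} fails as written. The theorem holds verbatim exactly when $M$ is finite, where the top is $S_\mathcal M$ itself. Otherwise it holds only after dropping the top from the definition of $m$-semilattice, which is the weakening you propose. The paper's proof passes over this silently.

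\textbf{Well-definedness of the involution.} Since you insist that elements of $S_\mathcal M$ are functions, the involution of Lemma~\ref{Sasaki is Involutive} makes sense only if reversing a composite depends on the function and not on the chosen factorization. This does hold. Write $\nu(n)=n^\perp$, so that $\pi^m=\nu\circ\pi_m\circ\nu$. For $P=\pi_{m_1}\circ\cdots\circ\pi_{m_j}$, the right adjoint is $P^{\vdash}=\pi^{m_j}\circ\cdots\circ\pi^{m_1}$. Hence $\pi_{m_j}\circ\cdots\circ\pi_{m_1}=\nu\circ P^{\vdash}\circ\nu$, and $P^{\vdash}$ is uniquely determined by $P$. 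This is the fact that makes the elementwise involution laws you invoke in \textbf{FODA 1} and the identity $\pi_m^*=\pi_m$ you use in \textbf{FODA 2} legitimate.
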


\begin{proof}
To establish that $\Gamma (\mathcal {M})$ is a finitary orthomodular dynamic algebra, we verify that axioms \textbf{FODA 1} through \textbf{FODA 7} from Definition \ref{finitary-goda} are satisfied.

\begin{enumerate}[label=(\textbf{FODA \arabic*}),leftmargin=1.5cm]
 \item By Lemma \ref{lem:multiplication-properties}, the triple $\bigl(\mathscr{P}_{\text{fin}}(S_\mathcal{M}),\;\subseteq,\;\odot\bigr)$ forms a unital involutive $m$-semilattice with unit $\{\pi_1\}$. The operation $\bigsqcup$ is precisely the finite-join operation, equivalent to set union ($\cup$). The involution $*$ is defined by reversing compositions and satisfies the properties $(A^*)^*=A$, $(A\vee B)^*=A^*\vee B^*$, and $(A\odot B)^*=B^*\odot A^*$.

 \item Proposition \ref{prop:chi-iso-finitary} demonstrates that $(\mathcal P_{\mathfrak Q},\;\preceq,\;\sim)$ is an orthomodular lattice. It is also evident that if $A \in \mathcal{P}$, then $A^* \in \mathcal{P}$.

 \item Minimality: Any subset $A\subseteq \mathscr{P}_{\text{fin}}(S_\mathcal{M})$ that contains $\mathcal P$ and is closed under the operations $\odot,\,*$, and $\bigsqcup$ must necessarily encompass all finite subsets of $S_\mathcal{M}$. This implies $A = \mathscr{P}_{\text{fin}}(S_\mathcal{M})$, as established by Theorem \ref{lemma2} and its accompanying closure arguments.

 \item Uniqueness of Join-Decomposition: The uniqueness of join-decomposition was rigorously proven in Theorem \ref{lemma2}.

\item Completeness: For any $A,B\in \left\langle \widetilde {\mathscr{P}_{\text{fin}}(S_\mathcal{M})} \right\rangle$, the equivalence $A=B$ holds if and only if $\ulcorner A\urcorner(P)=\ulcorner B\urcorner(P)$ for all $P\in \widetilde {\mathscr{P}_{\text{fin}}(S_\mathcal{M})}$. This property is detailed in Proposition \ref{prop:completeness}.

 \item Sasaki Projection: For any $P,Q \in \widetilde {\mathscr{P}_{\text{fin}}(S_\mathcal{M})}$, it holds that $\ulcorner P\urcorner(Q)=\pi_P(Q)$, as shown in Proposition \ref{prop:fA}.

 \item Composition Law: For all $A,B\in \mathscr{P}_{\text{fin}}(S_\mathcal{M})$, the composition law $\ulcorner A\urcorner(B)=\ulcorner A\urcorner(\sim\sim B)$ is satisfied, as demonstrated in Proposition \ref{composition}.
\end{enumerate}

Having verified that axioms \textbf{FODA 1} through \textbf{FODA 7} hold, we conclude that $\Gamma (\mathcal {M})$ is indeed a finitary orthomodular dynamic algebra.
\end{proof}


\subsubsection{Mapping of Arrows}
We formally define how the functor $\Gamma$ maps ortho-lattice isomorphisms between orthomodular lattices. Let $\mathcal{M}_1=(M_{1},\le_{1}, -^{\perp_{1}})$ and $\mathcal{M}_2=(M_{2}, \le_{2}, -^{\perp_{2}})$ be orthomodular lattices. The functor $\Gamma$ maps these lattices to $\Gamma (\mathcal{M}_1) =\bigl(\mathscr{P}_{\text{fin}}(S_{\mathcal{M}_1}) ,\bigcup_1,\odot_1,\sim_1, -^{*_1} \bigr)$ and $\Gamma (\mathcal{M}_2) =\bigl(\mathscr{P}_{\text{fin}}(S_{\mathcal{M}_2}) ,\bigcup_2,\odot_2,\sim_2, -^{*_2} \bigr)$ respectively.

Given an ortho-lattice isomorphism $k : \mathcal{M}_1 \to \mathcal{M}_2$, we define the mapping $\Gamma(k) : \Gamma (\mathcal{M}_1) \to \Gamma (\mathcal{M}_2)$ as follows:

$$A \mapsto \{k \circ a \circ k^{-1} \mid a \in A\}$$

For any $m \in M_1$, we can demonstrate the equivalence $\pi_{\,k(m)}=k\circ \pi_m\circ k^{-1}$ through the following derivation, which holds for every $n \in M_1$:

\[
\begin{aligned}
{\pi _{k\left( m \right)}}\left( n \right) &= k\left( m \right) \wedge \left( {{{\left( {k\left( m \right)} \right)}^ \bot } \vee n} \right)\\
&= k\left( {m \wedge \left( {{m^ \bot } \vee {k^{ - 1}}\left( n \right)} \right)} \right)\\
&= k\left( {{\pi _m}\left( {{k^{ - 1}}\left( n \right)} \right)} \right)\\
&=\left( {k \circ {\pi _m} \circ {k^{ - 1}}} \right)\left( n \right).
\end{aligned}
\]


The next theorem defines an $\mathbb{FODA}$-morphism specifically for the finitary orthomodular dynamic algebras built from orthomodular lattices.

\begin{theorem}
Let $\mathcal M_{1}=(M_{1},\le_{1}, -^{\perp_{1}})$ and $\mathcal M_{2}=(M_{2}, \le_{2}, -^{\perp_{2}})$ be orthomodular lattices. If $k: \mathcal M_{1} \to \mathcal M_{2}$ is an ortho-lattice isomorphism, then the mapping $\Gamma(k) : \Gamma (\mathcal{M}_1) \to \Gamma (\mathcal{M}_2)$, defined by $A \mapsto \{k \circ a \circ k^{-1} \mid a \in A\}$, is an $\mathbb{FODA}$-morphism.
\end{theorem}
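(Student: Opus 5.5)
The plan is to check the six conditions of Definition~\ref{q-morphism} for $\Phi:=\Gamma(k)$, after first making sure that $\Phi$ is well defined.

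\textbf{Well-definedness.} The central tool is the identity $\pi_{k(m)}=k\circ\pi_m\circ k^{-1}$ derived just above. It gives, for any $a=\pi_{m_1}\circ\cdots\circ\pi_{m_j}\in S_{\mathcal M_1}$,
\[
k\circ a\circ k^{-1}=(k\circ\pi_{m_1}\circ k^{-1})\circ\cdots\circ(k\circ\pi_{m_j}\circ k^{-1})=\pi_{k(m_1)}\circ\cdots\circ\pi_{k(m_j)}.
\]
This works by inserting $k^{-1}\circ k=\mathrm{id}$ between consecutive factors. Hence $k\circ a\circ k^{-1}\in S_{\mathcal M_2}$. Finiteness of $A$ gives finiteness of $\Phi(A)$, so $\Phi(A)\in\mathscr P_{\text{fin}}(S_{\mathcal M_2})$.

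\textbf{Conditions (2), (3), (5), (6).} These are elementwise set computations.
\begin{itemize}
\item[(2)] Unions are preserved, since $\Phi$ is an image map.
\item[(3)] For products, $k\circ(a\circ b)\circ k^{-1}=(k\circ a\circ k^{-1})\circ(k\circ b\circ k^{-1})$, so $\Phi(A\odot_1B)=\Phi(A)\odot_2\Phi(B)$.
\item[(5)] For the involution, I will use the displayed normal form together with Lemma~\ref{Sasaki is Involutive}. Reading $\pi_m^*=\pi_m$ from that lemma, we have $(\pi_{m_1}\circ\cdots\circ\pi_{m_j})^*=\pi_{m_j}\circ\cdots\circ\pi_{m_1}$. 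Conjugating by $k$ and reversing the order then commute.
\item[(6)] For the unit, $k\circ\pi_{1}\circ k^{-1}=\mathrm{id}=\pi_{1}$, and $k(1)=1$.
\end{itemize}

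\textbf{Condition (4).} We have $\sim_1A=\{\pi_{x^{\perp}}\}$ with $x=\bigvee_{a\in A}a(1)$. Then
\[
\Phi(\sim_1A)=\{\pi_{k(x^{\perp_1})}\}=\{\pi_{k(x)^{\perp_2}}\}.
\]
On the other side, $(k\circ a\circ k^{-1})(1)=k(a(k^{-1}(1)))=k(a(1))$, because $k^{-1}(1)=1$ for an order isomorphism. Since $k$ preserves finite joins (Remark), $\bigvee_{a}k(a(1))=k(x)$. So $\sim_2\Phi(A)=\{\pi_{k(x)^{\perp_2}}\}$ as well. The empty $A$ fits the same pattern with $x=0$.

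\textbf{Condition (1).} By Proposition~\ref{prop:chi-iso-finitary}, $\widetilde K_i=\{\{\pi_m\}\mid m\in M_i\}$, and $\Phi(\{\pi_m\})=\{\pi_{k(m)}\}$. Hence $\Phi|_{\widetilde K_1}=\delta_2\circ k\circ\delta_1^{-1}$, which is a composite of ortho-lattice isomorphisms and therefore itself one.

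\textbf{Main obstacle.} I expect the only delicate point to be the involution in (5). The paper does not define $\pi_m^*$ explicitly, so I will fix $\pi_m^*=\pi_m$, the natural choice making $S_{\mathcal M}$ an involutive monoid. I also have to make sure $^*$ is well defined on functions rather than on words. The safest route is to note that conjugation by $k$ transports any word representation to a word representation, so both sides are computed from the same word.
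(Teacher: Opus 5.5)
Your proposal is correct and takes essentially the same route as the paper. Both rely on the conjugation identity $\pi_{k(m)}=k\circ\pi_m\circ k^{-1}$, on the factorization $\Gamma(k)|_{\widetilde K_1}=\delta_2\circ k\circ\delta_1^{-1}$ for condition (1), and on elementwise computations for unions, products and the word-reversing involution, with the same implicit convention $\pi_m^*=\pi_m$. You are also more complete than the paper's proof, which at this point does not check well-definedness, the unit condition (6), or the $\sim$-condition (4). The paper proves (4) only in a later theorem, using essentially your observation that $(k\circ a\circ k^{-1})(1)=k(a(1))$.
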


\begin{proof}
To demonstrate that $\Gamma(k)$ is an $\mathbb{FODA}$-morphism, we must establish two conditions: first, that it constitutes an ortho-lattice isomorphism from $\left( {\widetilde {\mathscr{P}_{\text{fin}}(S_{\mathcal{M}_1})},{ \preceq _1},{ \sim _1}} \right)$ to $\left( {\widetilde {\mathscr{P}_{\text{fin}}(S_{\mathcal{M}_2})},{ \preceq _2},{ \sim _2}} \right)$, and second, that it preserves the finite join $\bigsqcup$, the multiplication $\odot$, and the involution $*$.

By Proposition \ref{prop:chi-iso-finitary}, we have $\widetilde {\mathscr{P}_{\text{fin}}(S_{\mathcal{M}_1})}=\bigl\{\{\,\pi_m\}\mid m \in M_1\bigr\}$ and $\widetilde {\mathscr{P}_{\text{fin}}(S_{\mathcal{M}_2})}=\bigl\{\{\,\pi_n\}\mid n \in M_2\bigr\}$. Observe that $\Gamma \left( k \right) = {\delta _2} \circ k \circ \delta _1^{ - 1}$ on $\widetilde {\mathscr{P}_{\text{fin}}(S_{\mathcal{M}_1})}$, given that for every $\{\pi_p\} \in \widetilde {\mathscr{P}_{\text{fin}}(S_{\mathcal{M}_1})}$, the following holds:
\begin{align*}
\Gamma(k)\bigl(\{\pi_p\}\bigr)
& = \{\,k\circ \pi_p\circ k^{-1}\}\\
& = \{\,\pi_{k(p)}\}\\
& = \delta_2\bigl(k(p)\bigr)\\
& = (\delta_2\circ k\circ\delta_1^{-1})\bigl(\{\pi_p\}\bigr).
\end{align*}

Next, we verify the preservation properties of $\Gamma(k)$.

\noindent \textit{Preservation of Finite Join $\bigsqcup$}: Let $A,B \in {\mathscr{P}_{\text{fin}}(S_{\mathcal{M}_1})}$. Then:
\begin{align*}
\Gamma(k)(A\bigsqcup_1 B)
& = \Gamma(k)(A\cup B) \\[6pt]
& = \{\,k\circ c\circ k^{-1}\mid c\in A\cup B\} \\[6pt]
& = \{\,k\circ a\circ k^{-1}\mid a\in A\}
 \;\cup\;
 \{\,k\circ b\circ k^{-1}\mid b\in B\} \\[6pt]
& = \Gamma(k)(A)\;\cup\;\Gamma(k)(B) \\[6pt]
& = \Gamma(k)(A)\bigsqcup_2\Gamma(k)(B).
\end{align*}

\noindent \textit{Preservation of Multiplication $\odot$}: Let $A,B\in {\mathscr{P}_{\text{fin}}(S_{\mathcal{M}_1})}$. Then:
\begin{align*}
\Gamma(k)(A\odot_1 B)
& = \Gamma(k)\bigl(\{a\circ b\mid a\in A,b\in B\}\bigr)\\
& = \{\,k\circ(a\circ b)\circ k^{-1}\mid a\in A,b\in B\}.
\end{align*}
Since $k$ is a homomorphism of functions, we have:
\begin{align*}
k\circ(a\circ b)\circ k^{-1}
& = (k\circ a\circ k^{-1})\;\circ\;(k\circ b\circ k^{-1}).
\end{align*}
Thus, it follows that:
\begin{align*}
\Gamma(k)(A\odot_1 B)
& = \{(k\circ a\circ k^{-1})\circ(k\circ b\circ k^{-1})\mid a\in A,b\in B\}\\
& = \Gamma(k)(A)\odot_2\Gamma(k)(B).
\end{align*}

\noindent \textit{Preservation of Involution $-^*$}: Let $A\in {\mathscr{P}_{\text{fin}}(S_{\mathcal{M}_1})}$. By definition, $A^* = \{\,a^*\mid a\in A\}$. Consequently:
\begin{align*}
\Gamma(k)(A^*)
& = \{\,k\circ a^*\circ k^{-1}\mid a\in A\}.
\end{align*}
If we express $a$ as $\pi_{i_1}\circ\cdots\circ \pi_{i_n}$, then $a^*=\pi_{i_n}\circ\cdots\circ \pi_{i_1}$. It can be verified that:
\begin{align*}
k\circ a^*\circ k^{-1}
& = \bigl(k\circ \pi_{i_n}\circ k^{-1}\bigr)\circ\cdots\circ\bigl(k\circ \pi_{i_1}\circ k^{-1}\bigr)\\
& = \bigl(k\circ(\pi_{i_1}\circ\cdots\circ \pi_{i_n})\circ k^{-1}\bigr)^*\\
& = (k\circ a\circ k^{-1})^*.
\end{align*}
Therefore:
\begin{align*}
\Gamma(k)(A^*)
& = \{(k\circ a\circ k^{-1})^*\mid a\in A\}\\
& = \bigl\{\,(k\circ a\circ k^{-1})\mid a\in A\bigr\}^*\\
& = \Gamma(k)(A)^*.
\end{align*}

Since $\Gamma(k)$ is a bijection on $\widetilde {\mathscr{P}_{\text{fin}}(S_{\mathcal{M}_1})}$ and preserves $\bigsqcup$, $\odot$, and $-^*$, it is established that $\Gamma(k)$ is an $\mathbb{FODA}$-morphism between $\Gamma(\mathcal{M}_1)$ and $\Gamma(\mathcal{M}_2)$.
\end{proof}

\begin{theorem}\label{comp}
Let $\mathcal{M}_1$, $\mathcal{M}_2$, and $\mathcal{M}_3$ be orthomodular lattices. If $k: \mathcal{M}_1 \to \mathcal{M}_2$ and $l: \mathcal{M}_2 \to \mathcal{M}_3$ are $\mathbb{OML}$-morphisms, then the mapping $\Gamma$ preserves the composition of morphisms, i.e., $\Gamma(l \circ k) = \Gamma(l) \circ \Gamma(k)$.
\end{theorem}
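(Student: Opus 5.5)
The plan is to verify the equality of the two maps $\Gamma(l\circ k)$ and $\Gamma(l)\circ\Gamma(k)$ pointwise on an arbitrary $A\in\mathscr{P}_{\text{fin}}(S_{\mathcal{M}_1})$, using only the defining formula $\Gamma(k)(A)=\{k\circ a\circ k^{-1}\mid a\in A\}$ and elementary facts about composition and inverses of bijections. First I will note that $l\circ k:\mathcal{M}_1\to\mathcal{M}_3$ is again an ortho-lattice isomorphism: it is a bijection, it reflects and preserves order because each factor does, and it commutes with orthocomplements. Hence $\Gamma(l\circ k)$ is defined and is an $\mathbb{FODA}$-morphism by the preceding theorem. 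I will also record that $(l\circ k)^{-1}=k^{-1}\circ l^{-1}$.

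Next comes the direct computation. Take any $A\in\mathscr{P}_{\text{fin}}(S_{\mathcal{M}_1})$. Then
\begin{align*}
\bigl(\Gamma(l)\circ\Gamma(k)\bigr)(A)
&=\Gamma(l)\bigl(\{k\circ a\circ k^{-1}\mid a\in A\}\bigr)\\
&=\{\,l\circ(k\circ a\circ k^{-1})\circ l^{-1}\mid a\in A\}\\
&=\{\,(l\circ k)\circ a\circ (k^{-1}\circ l^{-1})\mid a\in A\}\\
&=\{\,(l\circ k)\circ a\circ (l\circ k)^{-1}\mid a\in A\}\\
&=\Gamma(l\circ k)(A).
\end{align*}
The second equality is legitimate only once I know that $\{k\circ a\circ k^{-1}\mid a\in A\}$ is indeed an element of $\mathscr{P}_{\text{fin}}(S_{\mathcal{M}_2})$.

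That well-definedness point is the only step needing care, and I expect it to be the main obstacle, although it is mild. Finiteness is clear. For membership in $S_{\mathcal{M}_2}$, write $a=\pi_{m_1}\circ\cdots\circ\pi_{m_j}$ and insert $k^{-1}\circ k$ between the factors. Using the identity $\pi_{k(m)}=k\circ\pi_m\circ k^{-1}$ derived just before the preceding theorem, this gives $k\circ a\circ k^{-1}=\pi_{k(m_1)}\circ\cdots\circ\pi_{k(m_j)}\in S_{\mathcal{M}_2}$.

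Since $A$ was arbitrary, the two functions coincide, so $\Gamma(l\circ k)=\Gamma(l)\circ\Gamma(k)$. For completeness I will also remark that $\Gamma(\operatorname{id}_{M})(A)=A$, which is immediate from the formula, so that $\Gamma$ is a functor.
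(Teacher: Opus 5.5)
Your proof is correct and follows the paper's argument essentially verbatim. Both compute $\Gamma(l)(\Gamma(k)(A))$ and $\Gamma(l\circ k)(A)$ elementwise and match them using associativity of composition and $(l\circ k)^{-1}=k^{-1}\circ l^{-1}$; your additional check that $k\circ a\circ k^{-1}\in S_{\mathcal{M}_2}$, via $\pi_{k(m)}=k\circ\pi_m\circ k^{-1}$, is a well-definedness detail the paper leaves implicit.
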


\begin{proof}
Let $k: \mathcal{M}_1 \to \mathcal{M}_2$ and $l: \mathcal{M}_2 \to \mathcal{M}_3$ be arbitrary $\mathbb{OML}$-morphisms. Our objective is to demonstrate that $\Gamma(l \circ k) = \Gamma(l) \circ \Gamma(k)$.

For any $A_1 \in Q_1$, we apply the definition of $\Gamma$ to the composite morphism $l \circ k$:
\begin{align*}
\Gamma(l \circ k)(A_1) & = \{(l \circ k) \circ a \circ (l \circ k)^{-1} \mid a \in A_1\}.
\end{align*}
Next, we apply $\Gamma(l)$ to the result of $\Gamma(k)(A_1)$:
\begin{align*}
\Gamma(l)(\Gamma(k)(A_1)) & = \Gamma(l)(\{k \circ a \circ k^{-1} \mid a \in A_1\}) \\
& = \{l \circ (k \circ a \circ k^{-1}) \circ l^{-1} \mid a \in A_1\}.
\end{align*}
By the associativity of function composition, we can re-arrange the terms within the set:
\begin{align*}
l \circ (k \circ a \circ k^{-1}) \circ l^{-1} & = (l \circ k) \circ a \circ (k^{-1} \circ l^{-1}) \\
& = (l \circ k) \circ a \circ (l \circ k)^{-1}.
\end{align*}
Therefore, substituting this back into the expression for $\Gamma(l)(\Gamma(k)(A_1))$:
\begin{align*}
\Gamma(l)(\Gamma(k)(A_1)) & = \{(l \circ k) \circ a \circ (l \circ k)^{-1} \mid a \in A_1\}.
\end{align*}
Comparing this result with the definition of $\Gamma(l \circ k)(A_1)$, we conclude that:
\begin{align*}
\Gamma(l \circ k)(A_1) & = \Gamma(l)(\Gamma(k)(A_1)).
\end{align*}
Since this equality holds for all $A_1 \in Q_1$, it follows that $\Gamma(l \circ k) = \Gamma(l) \circ \Gamma(k)$.
\end{proof}

We also show that $\Gamma$ preserves identity morphisms.
\begin{theorem}\label{id}
Let $\mathcal{M}$ be an orthomodular lattice. The functor $\Gamma$ preserves the identity morphism on $\mathcal{M}$, that is, $\Gamma(\text{id}_{\mathcal{M}}) = \text{id}_{\Gamma(\mathcal{M})}$.
\end{theorem}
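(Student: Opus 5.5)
The plan is to verify the equality of the two maps pointwise on $\mathscr{P}_{\text{fin}}(S_\mathcal{M})$, directly from the definition of $\Gamma$ on arrows. Both $\Gamma(\mathrm{id}_{\mathcal{M}})$ and $\mathrm{id}_{\Gamma(\mathcal{M})}$ are functions $\mathscr{P}_{\text{fin}}(S_\mathcal{M}) \to \mathscr{P}_{\text{fin}}(S_\mathcal{M})$, so it suffices to show that they agree on every finite set $A \subseteq S_\mathcal{M}$.

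First, I will note that $\mathrm{id}_{\mathcal{M}} = \mathrm{id}_M$ is an ortho-lattice isomorphism, so $\Gamma(\mathrm{id}_{\mathcal{M}})$ is well defined, and that $\mathrm{id}_M^{-1} = \mathrm{id}_M$. Then, for an arbitrary $A \in \mathscr{P}_{\text{fin}}(S_\mathcal{M})$, the definition of $\Gamma$ on arrows gives
\[
\Gamma(\mathrm{id}_{\mathcal{M}})(A) = \{\mathrm{id}_M \circ a \circ \mathrm{id}_M^{-1} \mid a \in A\} = \{\mathrm{id}_M \circ a \circ \mathrm{id}_M \mid a \in A\}.
\]
Since each $a \in A$ is a map $M \to M$, we have $\mathrm{id}_M \circ a \circ \mathrm{id}_M = a$. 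Hence the set is $\{a \mid a \in A\} = A = \mathrm{id}_{\Gamma(\mathcal{M})}(A)$.

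As $A$ was arbitrary, the two maps coincide. If one wants to check it, the result is consistent with the earlier identity $\pi_{k(m)} = k \circ \pi_m \circ k^{-1}$: with $k = \mathrm{id}_M$ this gives $\pi_m = \pi_m$, so $\Gamma(\mathrm{id}_{\mathcal{M}})$ fixes every singleton $\{\pi_m\}$. This agrees with the observation that $\Gamma(k) = \delta \circ k \circ \delta^{-1}$ on $\widetilde{\mathscr{P}_{\text{fin}}(S_\mathcal{M})}$.

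I expect no real obstacle. The only subtle point is bookkeeping: the identity morphism in $\mathbb{FODA}$ must be the identity function on the carrier $\mathscr{P}_{\text{fin}}(S_\mathcal{M})$. That function satisfies all the conditions of Definition~\ref{q-morphism} trivially, so the computation above identifies $\Gamma(\mathrm{id}_{\mathcal{M}})$ with it. Together with Theorem~\ref{comp}, this establishes that $\Gamma$ is a functor.
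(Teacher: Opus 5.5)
Your proposal is correct and is essentially the paper's own argument: evaluate $\Gamma(\mathrm{id}_{\mathcal{M}})$ on an arbitrary $A \in \mathscr{P}_{\text{fin}}(S_\mathcal{M})$, reduce $\mathrm{id}_M \circ a \circ \mathrm{id}_M^{-1}$ to $a$, and conclude that $\Gamma(\mathrm{id}_{\mathcal{M}})$ is the identity on $\mathscr{P}_{\text{fin}}(S_\mathcal{M})$. Your extra remarks about the singletons $\{\pi_m\}$ and $\delta \circ k \circ \delta^{-1}$ are harmless consistency checks but are not needed.
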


\begin{proof}
Let $\mathcal{M} = (M, \le, -^{\perp})$ be an orthomodular lattice. Let $\text{id}_{\mathcal{M}}$ denote the identity morphism on $\mathcal{M}$ within the category of orthomodular lattices, $\mathbb{OML}$. Our objective is to demonstrate that $\Gamma(\text{id}_{\mathcal{M}}) = \text{id}_{\Gamma(\mathcal{M})}$.

By definition of the functor $\Gamma$, for any object $A \in Q$ (where $Q$ is the image of $\mathcal{M}$ under the functor $\Gamma$):
\[ \Gamma(\text{id}_{\mathcal{M}})(A) = \{\text{id}_{\mathcal{M}} \circ a \circ \text{id}_{\mathcal{M}}^{-1} \mid a \in A\} \]
Since $\text{id}_{\mathcal{M}}$ is the identity morphism, for any $a \in A$, we have $\text{id}_{\mathcal{M}} \circ a = a$ and $a \circ \text{id}_{\mathcal{M}}^{-1} = a$. Thus, the expression simplifies to:
\[ \Gamma(\text{id}_{\mathcal{M}})(A) = \{a \mid a \in A\} = A \]
Since this holds for every $A \in Q$, it follows directly from the definition of the identity morphism on $\Gamma(\mathcal{M})$ that $\Gamma(\text{id}_{\mathcal{M}})$ acts as the identity on $\Gamma(\mathcal{M})$.
Therefore, we conclude that $\Gamma(\text{id}_{\mathcal{M}}) = \text{id}_{\Gamma(\mathcal{M})}$.
\end{proof}

This leads us to the conclusion that $\Gamma$ is a functor between $\mathbb{OML}$ and $\mathbb{FODA}$.

\begin{theorem}
The mapping $\Gamma$ constitutes a functor from the category of orthomodular lattices, $\mathbb{OML}$, to the category of finitary orthomodular dynamic algebras, $\mathbb{FODA}$.
\end{theorem}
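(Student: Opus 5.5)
The plan is to assemble the functoriality statement from the results already proved in this subsection, so the proof is essentially a bookkeeping argument. A functor $\Gamma:\mathbb{OML}\to\mathbb{FODA}$ requires three things: an assignment on objects landing in $\mathbb{FODA}$, an assignment on arrows landing in $\mathbb{FODA}$-morphisms with the correct domain and codomain, and preservation of composition and identities.

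For objects, I will invoke the theorem stating that $\Gamma(\mathcal{M})=\bigl(\mathscr{P}_{\text{fin}}(S_\mathcal{M}),\bigcup,\odot,\sim,-^*\bigr)$ is a finitary orthomodular dynamic algebra. This gives $\Gamma(\mathcal{M})\in\mathrm{Ob}(\mathbb{FODA})$ for every orthomodular lattice $\mathcal{M}$.

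For arrows, I will take an ortho-lattice isomorphism $k:\mathcal{M}_1\to\mathcal{M}_2$ and check that $\Gamma(k)$ is well defined. Its values should be finite subsets of $S_{\mathcal{M}_2}$. Finiteness of $\{k\circ a\circ k^{-1}\mid a\in A\}$ is clear. Membership in $S_{\mathcal{M}_2}$ follows from the identity $\pi_{k(m)}=k\circ\pi_m\circ k^{-1}$: inserting $k^{-1}\circ k$ between consecutive factors gives
\[
k\circ(\pi_{m_1}\circ\cdots\circ\pi_{m_j})\circ k^{-1}=\pi_{k(m_1)}\circ\cdots\circ\pi_{k(m_j)}.
\]
I will then cite the preceding theorem that $\Gamma(k)$ is an $\mathbb{FODA}$-morphism.

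That theorem verified only some of the clauses of Definition~\ref{q-morphism}. It covered clause~1, via the identity $\Gamma(k)=\delta_2\circ k\circ\delta_1^{-1}$ on $\widetilde{\mathscr{P}_{\text{fin}}(S_{\mathcal{M}_1})}$, together with joins, products and involution. To be complete, I will add the two remaining clauses here.

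For the unit, $\Gamma(k)(\{\pi_1\})=\{k\circ\pi_1\circ k^{-1}\}=\{\pi_{k(1)}\}=\{\pi_1\}$, since $k$ preserves the top element.

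For $\sim$, let $A\in\mathscr{P}_{\text{fin}}(S_{\mathcal{M}_1})$ and set $x=\bigvee_{a\in A}a(1)$. Then
\[
\Gamma(k)(\sim_1 A)=\{\pi_{k(x^{\perp})}\}=\{\pi_{k(x)^{\perp}}\}.
\]
Meanwhile $(k\circ a\circ k^{-1})(1)=k(a(1))$, and $k$ preserves finite joins by the Remark following the definition of ortho-lattice isomorphism. Hence $\sim_2\Gamma(k)(A)=\{\pi_{(\bigvee_a k(a(1)))^{\perp}}\}$, which is the same set.

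Finally, preservation of composition is Theorem~\ref{comp} and preservation of identities is Theorem~\ref{id}. Together with the object and arrow assignments, these are exactly the functor axioms, so $\Gamma$ is a functor.

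The main obstacle is the arrow step, and specifically the two clauses (unit and $\sim$) that the earlier morphism theorem did not treat. Both reduce to the conjugation identity $\pi_{k(m)}=k\circ\pi_m\circ k^{-1}$ and to $k$ being an ortho-lattice isomorphism, which fixes $1$ and commutes with joins and $\perp$, so I expect them to go through by direct computation.
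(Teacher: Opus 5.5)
Your proposal is correct and follows the paper's route, which simply cites Theorems~\ref{comp} and~\ref{id}. Yours is the more complete version: you also check that objects and arrows land in $\mathbb{FODA}$ (including that $k\circ a\circ k^{-1}\in S_{\mathcal{M}_2}$), and you correctly verify the unit clause $\Gamma(k)(\{\pi_1\})=\{\pi_1\}$, which the paper never checks, and the $\sim$ clause, which the paper proves only in the theorem after this one.

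One caveat you inherit rather than introduce: your object step rests on the paper's theorem that $\Gamma(\mathcal{M})$ is a FODA. The paper's m-semilattices must have a greatest element, and $\mathscr{P}_{\text{fin}}(S_\mathcal{M})$ has no top when $M$ is infinite (because $m\mapsto\pi_m$ is injective, so $S_\mathcal{M}$ is infinite), so that cited theorem fails for infinite $M$.
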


\begin{proof}
The assertion follows directly from Theorem \ref{comp}, which establishes the preservation of composition, and Theorem \ref{id}, which establishes the preservation of identity morphisms.
\end{proof}


We now proceed to show that the functor $\Gamma$ maintains the structure defined by the operation \(\sim\).

\begin{theorem}
Let $\mathcal{M}_1$ and $\mathcal{M}_2$ be orthomodular lattices. If $k: \mathcal{M}_1 \to \mathcal{M}_2$ is an ortho-lattice isomorphism, then for every $A \in \Gamma(\mathcal{M}_1)$, the following equality holds:
\[ \Gamma(k)(\sim A) = \sim(\Gamma(k)(A)) \]
\end{theorem}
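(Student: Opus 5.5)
The plan is to compute both sides explicitly and reduce the claim to an identity in $M_2$. By the definition of $\sim$ in $\Gamma(\mathcal{M}_1)$,
\[
\sim_1 A=\Bigl\{\pi_{\bigl(\bigvee\{a(1)\mid a\in A\}\bigr)^{\perp_1}}\Bigr\}.
\]
We write $x=\bigvee_{a\in A}a(1)\in M_1$. Applying $\Gamma(k)$ and the identity $\pi_{k(m)}=k\circ\pi_m\circ k^{-1}$ derived just before the arrow-mapping theorem, we get
\[
\Gamma(k)(\sim_1 A)=\{k\circ\pi_{x^{\perp_1}}\circ k^{-1}\}=\{\pi_{k(x^{\perp_1})}\}.
\]
Since $k$ is an ortho-lattice isomorphism, $k(x^{\perp_1})=(k(x))^{\perp_2}$. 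Because a bijective order-isomorphism preserves all existing joins (the Remark covers binary joins, and the finite case follows by induction), $k(x)=\bigvee_{a\in A}k(a(1))$. So the left-hand side is $\bigl\{\pi_{(\bigvee_{a\in A}k(a(1)))^{\perp_2}}\bigr\}$.

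For the right-hand side, $\Gamma(k)(A)=\{k\circ a\circ k^{-1}\mid a\in A\}$. By definition of $\sim_2$,
\[
\sim_2\Gamma(k)(A)=\Bigl\{\pi_{\bigl(\bigvee_{a\in A}(k\circ a\circ k^{-1})(1_2)\bigr)^{\perp_2}}\Bigr\}.
\]
Here the join is over the set $\{(k\circ a\circ k^{-1})(1_2)\mid a\in A\}$, so repetitions caused by distinct $a$ having equal conjugates do no harm. The key observation is that $k^{-1}(1_2)=1_1$, since an order isomorphism maps top to top. Hence $(k\circ a\circ k^{-1})(1_2)=k(a(1_1))$, and the two singletons coincide.

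There is one edge case. If $A=\emptyset$, the empty join is $0$, and both sides equal $\{\pi_{1}\}$: on the left this uses $k(0_1)=0_2$, hence $k(0_1)^{\perp_2}=1_2$. I expect no real obstacle here. The only points needing care are the justification that $k$ preserves finite (including empty) joins and the top element, and the bookkeeping that $\Gamma(k)$ acts elementwise, so the set-builder indexing over $A$ transfers correctly.
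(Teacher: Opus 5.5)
Your proposal is correct and follows the same route as the paper: you conjugate using $k\circ\pi_p\circ k^{-1}=\pi_{k(p)}$ and then use that $k$ preserves orthocomplements and finite joins. You are more careful than the paper at the final step, since you justify $(k\circ a\circ k^{-1})(1)=k(a(1))$ explicitly through $k^{-1}(1)=1$ and you check the empty case, whereas the paper only remarks that $k(a(1))$ ``corresponds to the first component'' of the elements of $\Gamma(k)(A)$.
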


\begin{proof}
Recall the definition of $\sim A$ as $\sim A = \{\pi_{(\bigvee\{a(1) \mid a \in A\})^\perp}\}$.
Applying the functor $\Gamma(k)$ to $\sim A$, we get:
\[ \Gamma(k)(\sim A) = \Gamma(k)\left(\{\pi_{(\bigvee\{a(1) \mid a \in A\})^\perp}\}\right) \]
By the definition of $\Gamma(k)$, this expands to:
\[ \Gamma(k)(\sim A) = \{k \circ \pi_{(\bigvee\{a(1) \mid a \in A\})^\perp} \circ k^{-1}\} \]
It is a known property that for any projection $\pi_p$ and an ortho-lattice isomorphism $k$, $k \circ \pi_p \circ k^{-1} = \pi_{k(p)}$.
Applying this property, we can write:
\[ k \circ \pi_{(\bigvee\{a(1) \mid a \in A\})^\perp} \circ k^{-1} = \pi_{k((\bigvee\{a(1) \mid a \in A\})^\perp)} \]
Since $k$ is an ortho-lattice isomorphism, it preserves the orthocomplement and joins, so $k((X)^\perp) = (k(X))^\perp$ and $k(\bigvee S) = \bigvee k(S)$. Thus:
\[ \pi_{k((\bigvee\{a(1) \mid a \in A\})^\perp)} = \pi_{(\bigvee\{k(a(1)) \mid a \in A\})^\perp} \]
Substituting this back into our expression for $\Gamma(k)(\sim A)$:
\[ \Gamma(k)(\sim A) = \{\pi_{(\bigvee\{k(a(1)) \mid a \in A\})^\perp}\} \]
By the definition of $\sim$ applied to $\Gamma(k)(A)$, which is defined as $\{\,k \circ a \circ k^{-1} \mid a \in A\}$, we observe that the element $k(a(1))$ corresponds to the first component of the elements in $\Gamma(k)(A)$. Therefore:
\[ \{\pi_{(\bigvee\{k(a(1)) \mid a \in A\})^\perp}\} = \sim(\Gamma(k)(A)) \]
Hence, we conclude that $\Gamma(k)(\sim A) = \sim(\Gamma(k)(A))$.
\end{proof}



\begin{proposition}\label{bijection}
Let $\mathcal{M}_1$ and $\mathcal{M}_2$ be orthomodular lattices. If $k: \mathcal{M}_1 \to \mathcal{M}_2$ is an ortho-lattice isomorphism, then the mapping $\Gamma(k) : \Gamma(\mathcal{M}_1) \to \Gamma(\mathcal{M}_2)$ is bijective. Moreover, its inverse is $\Gamma(k^{-1}): \Gamma(\mathcal{M}_2) \to \Gamma(\mathcal{M}_1)$, where $\Gamma(k^{-1})(C)=\{\,k^{-1}\circ c\circ k\mid c\in C\}$ for every $C \in \Gamma(\mathcal{M}_2)$.
\end{proposition}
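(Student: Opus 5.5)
The plan is to show directly that $\Gamma(k^{-1})$ is a two-sided inverse of $\Gamma(k)$; bijectivity then follows with no separate injectivity or surjectivity argument.

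\textbf{Well-definedness.} First I would check that $\Gamma(k^{-1})$ makes sense. Since $k$ is an ortho-lattice isomorphism, $k^{-1}:\mathcal{M}_2\to\mathcal{M}_1$ is a bijection. It preserves and reflects order because $k$ does, and it preserves orthocomplements: applying $k^{-1}$ to $k(m^{\perp_1})=(k(m))^{\perp_2}$ with $m=k^{-1}(n)$ gives $k^{-1}(n^{\perp_2})=(k^{-1}(n))^{\perp_1}$. So $k^{-1}$ is an $\mathbb{OML}$-morphism. By the definition of $\Gamma$ on arrows, $\Gamma(k^{-1})(C)=\{k^{-1}\circ c\circ k\mid c\in C\}$, since $(k^{-1})^{-1}=k$.

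It remains to check that this lands in $\mathscr{P}_{\text{fin}}(S_{\mathcal{M}_1})$. Write $c=\pi_{n_1}\circ\cdots\circ\pi_{n_j}$. Inserting $k\circ k^{-1}$ between the factors and using the identity $k^{-1}\circ\pi_n\circ k=\pi_{k^{-1}(n)}$ (the derivation before the morphism theorem, applied to $k^{-1}$), we get
\[
k^{-1}\circ c\circ k=\pi_{k^{-1}(n_1)}\circ\cdots\circ\pi_{k^{-1}(n_j)}\in S_{\mathcal{M}_1}.
\]
Finiteness is preserved because the image of a finite set is finite. The symmetric argument confirms that $\Gamma(k)$ also maps finite subsets of $S_{\mathcal{M}_1}$ to finite subsets of $S_{\mathcal{M}_2}$.

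\textbf{Two-sided inverse.} Next I would compute the composites. For $A\in\Gamma(\mathcal{M}_1)$,
\[
\Gamma(k^{-1})(\Gamma(k)(A))=\{k^{-1}\circ k\circ a\circ k^{-1}\circ k\mid a\in A\}=\{a\mid a\in A\}=A,
\]
and symmetrically $\Gamma(k)(\Gamma(k^{-1})(C))=C$ for $C\in\Gamma(\mathcal{M}_2)$. Alternatively, one can invoke Theorem~\ref{comp} and Theorem~\ref{id}: $\Gamma(k^{-1})\circ\Gamma(k)=\Gamma(k^{-1}\circ k)=\Gamma(\mathrm{id}_{\mathcal{M}_1})=\mathrm{id}_{\Gamma(\mathcal{M}_1)}$, and likewise in the other order. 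I would present the functorial route and mention the direct computation as a check.

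\textbf{Main obstacle.} The only real subtlety is that $\Gamma(k^{-1})$ genuinely lands in $\mathscr{P}_{\text{fin}}(S_{\mathcal{M}_1})$. Conjugating a composite of Sasaki projections must again give a composite of Sasaki projections, and this is exactly the factorwise conjugation identity $k^{-1}\circ\pi_n\circ k=\pi_{k^{-1}(n)}$. Once that is in place, the rest is formal cancellation of $k\circ k^{-1}$ and $k^{-1}\circ k$. Having a two-sided inverse, $\Gamma(k)$ is bijective with $\Gamma(k)^{-1}=\Gamma(k^{-1})$.
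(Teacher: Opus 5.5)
Your proposal is correct and follows the paper's proof. Both arguments exhibit $\Gamma(k^{-1})$ as a two-sided inverse of $\Gamma(k)$ by cancelling $k^{-1}\circ k$ and $k\circ k^{-1}$; your functorial route through Theorems~\ref{comp} and~\ref{id} is only a repackaging of that same computation. Your explicit check that $k^{-1}\circ c\circ k$ is again a composite of Sasaki projections, via $k^{-1}\circ\pi_n\circ k=\pi_{k^{-1}(n)}$, usefully fills in the well-definedness of $\Gamma(k^{-1})$, which the paper merely asserts.
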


\begin{proof}
Since $k$ is an ortho-lattice isomorphism, its inverse $k^{-1}: \mathcal{M}_2 \to \mathcal{M}_1$ is also an ortho-lattice isomorphism. Consequently, $\Gamma(k^{-1})$ is well-defined on $\mathscr{P}_{\text{fin}}(S_{\mathcal{M}_2})$, where $S_{\mathcal{M}_2}$ denotes the set of states of $\mathcal{M}_2$.

To demonstrate that $\Gamma(k)$ is bijective, we will show that $\Gamma(k^{-1})$ acts as both its left and right inverse. That is, we will prove:
\begin{enumerate}
    \item $\Gamma(k^{-1})\circ \Gamma(k)=\mathrm{id}_{\Gamma(\mathcal{M}_1)}$
    \item $\Gamma(k)\circ \Gamma(k^{-1})=\mathrm{id}_{\Gamma(\mathcal{M}_2)}$
\end{enumerate}

For any $A \in \Gamma(\mathcal{M}_1)$, we compute the composition $\Gamma(k^{-1})\bigl(\Gamma(k)(A)\bigr)$:
\[
\begin{aligned}
\Gamma(k^{-1})\bigl(\Gamma(k)(A)\bigr)
&= \bigl\{\,k^{-1}\circ x\circ k \;\big|\; x\in \Gamma(k)(A)\bigr\} && (\text{Definition of } \Gamma(k^{-1}))\\[4pt]
&= \bigl\{\,k^{-1}\circ (k\circ a\circ k^{-1})\circ k \;\big|\; a\in A\bigr\} && (\text{Definition of } \Gamma(k))\\[4pt]
&= \bigl\{\,(k^{-1}\circ k)\circ a\circ (k^{-1}\circ k) \;\big|\; a\in A\bigr\} && (\text{Associativity of composition})\\[4pt]
&= \{\,\mathrm{id}_{\mathcal{M}_1}\circ a\circ \mathrm{id}_{\mathcal{M}_1} \;\big|\; a\in A\} && (\text{Definition of inverse})\\[4pt]
&= \{\,a \mid a\in A\} && (\text{Property of identity})\\[4pt]
&= A
\end{aligned}
\]
Thus, $\Gamma(k^{-1})\circ \Gamma(k)=\mathrm{id}_{\Gamma(\mathcal{M}_1)}$.

Similarly, for any $C \in \Gamma(\mathcal{M}_2)$, we compute the composition $\Gamma(k)\bigl(\Gamma(k^{-1})(C)\bigr)$:
\[
\begin{aligned}
\Gamma(k)\bigl(\Gamma(k^{-1})(C)\bigr)
&= \bigl\{\,k\circ y\circ k^{-1} \;\big|\; y\in \Gamma(k^{-1})(C)\bigr\} && (\text{Definition of } \Gamma(k))\\[4pt]
&= \bigl\{\,k\circ (k^{-1}\circ c\circ k)\circ k^{-1} \;\big|\; c\in C\bigr\} && (\text{Definition of } \Gamma(k^{-1}))\\[4pt]
&= \bigl\{\,(k\circ k^{-1})\circ c\circ (k\circ k^{-1}) \;\big|\; c\in C\bigr\} && (\text{Associativity of composition})\\[4pt]
&= \{\,\mathrm{id}_{\mathcal{M}_2}\circ c\circ \mathrm{id}_{\mathcal{M}_2} \;\big|\; c\in C\} && (\text{Definition of inverse})\\[4pt]
&= \{\,c \mid c\in C\} && (\text{Property of identity})\\[4pt]
&= C
\end{aligned}
\]
Thus, $\Gamma(k)\circ \Gamma(k^{-1})=\mathrm{id}_{\Gamma(\mathcal{M}_2)}$.

Since $\Gamma(k^{-1})$ serves as both a left and right inverse for $\Gamma(k)$, it follows that $\Gamma(k)$ is bijective and its inverse is precisely $\Gamma(k^{-1})$.
\end{proof}

\subsection{The Functor from  Finitary Orthomodular Dynamic Algebras to Orthomodular Lattices} 

In this subsection we define a functor $ \Psi : \mathbb{FODA} \to \mathbb{OML}$,
which assigns to each finitary orthomodular dynamic algebra an orthomodular lattice and to each $\mathbb{FODA}$-morphism an ortho-lattice isomorphism.

\subsubsection{Mapping of Objects} 
Let $\mathfrak{K} = (K , {\bigsqcup } , {\odot } , \thicksim, {-^*} )$ denote a finitary orthomodular dynamic algebra. The object mapping of $\Psi$ is defined such that $\Psi(\mathfrak{K})=( \widetilde K,\preceq ,(-)^\perp)$, where $\widetilde K = \{ \thicksim k \mid k \in K \}$. Pursuant to Definition \ref{deffoda}, it is established that $\mathfrak{K}$ constitutes an orthomodular lattice.

\subsubsection{Mapping of Arrows} 
 Let $\mathfrak{K}_1 = (K_1 , {\bigsqcup }_1 , {\odot }_1 , \thicksim_1, {-^{*_1}} )$ and $\mathfrak{K}_2 = (K_2 , {\bigsqcup }_2 , {\odot }_2 , \thicksim_2, {-^{*_2}} )$ be finitary orthomodular dynamic algebras. Given that $\phi: \mathfrak{K}_1 \to\mathfrak{K}_2$ is a $\mathbb{FODA}$-morphism, the arrow mapping of $\Psi$ is defined as $\Psi(\phi) : \Psi(\mathfrak{K}_1) \to \Psi(\mathfrak{K}_2)$ such that $\Psi(\phi)(k) = \phi(k)$ for all $k \in K_1$.

\begin{theorem}
Let $\mathfrak{K}_1 = (K_1 , {\bigsqcup }_1 , {\odot }_1 , \thicksim_1, {-^{*_1}} )$ and $\mathfrak{K}_2 = (K_2 , {\bigsqcup }_2 , {\odot }_2 , \thicksim_2, {-^{*_2}} )$ be finitary orthomodular dynamic algebras. If $\phi: \mathfrak{K}_1 \to\mathfrak{K}_2$ is a $\mathbb{FODA}$-morphism, then $\Psi(\phi)(k)$ is an $\mathbb{OML}$-morphism from $\Psi(\mathfrak{K}_1)$ to $\Psi(\mathfrak{K}_2)$.
\end{theorem}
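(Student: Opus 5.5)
The plan is to unwind the definitions. $\Psi(\phi)$ is simply the restriction $\phi|_{\widetilde K_1}$, and condition (1) of Definition~\ref{q-morphism} already says this restriction is an ortho-lattice isomorphism. So the real work is checking that this restriction lands where it should and that the order and orthocomplement on $\Psi(\mathfrak K_i)$ match those in condition (1).

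\emph{Step 1: well-definedness.} I first note that $\Psi(\phi)$ is only meaningfully defined on $\widetilde K_1$. For $w\in\widetilde K_1$, write $w=\thicksim_1 k$. Then condition (4) gives
\[
\phi(w)=\thicksim_2\phi(k)\in\widetilde K_2 .
\]
This recovers the inclusion $\phi(\widetilde K_1)\subseteq\widetilde K_2$ independently of condition (1). Hence $\Psi(\phi)\colon\widetilde K_1\to\widetilde K_2$ is a genuine function.

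\emph{Step 2: orthocomplement.} The orthocomplement $(-)^\perp$ of $\Psi(\mathfrak K_i)$ is the restriction of $\thicksim_i$ to $\widetilde K_i$. By condition (4), $\Psi(\phi)(w^{\perp_1})=\phi(\thicksim_1 w)=\thicksim_2\phi(w)=(\Psi(\phi)(w))^{\perp_2}$.

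\emph{Step 3: order.} For $v,w\in\widetilde K_1$ we have $v\preceq_1 w$ iff $\thicksim_1\thicksim_1(v\sqcup_1 w)=w$. Applying conditions (2) and (4) gives
\[
\phi\bigl(\thicksim_1\thicksim_1(v\sqcup_1 w)\bigr)=\thicksim_2\thicksim_2\bigl(\phi(v)\sqcup_2\phi(w)\bigr).
\]
So $v\preceq_1 w$ implies $\phi(v)\preceq_2\phi(w)$. The converse direction and bijectivity onto $\widetilde K_2$ do not follow from the operation-preservation conditions alone. For these I invoke condition (1) directly: $\phi|_{\widetilde K_1}$ is a bijection onto $\widetilde K_2$ reflecting order. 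Once the order is reflected, injectivity is also automatic via antisymmetry of $\preceq_2$.

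\emph{Main obstacle.} The only delicate point is surjectivity onto $\widetilde K_2$. Nothing in conditions (2)--(6) forces $\phi$ to hit every element of $\widetilde K_2$, so this genuinely rests on the hypothesis in condition (1) rather than on a computation. I would state explicitly that condition (1) is being used there. The three requirements of an ortho-lattice isomorphism (bijection, order equivalence, preservation of $\perp$) then hold, so $\Psi(\phi)$ is an $\mathbb{OML}$-morphism.
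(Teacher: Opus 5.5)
Your proposal is correct and follows the paper's route: the paper's proof is the one-line observation that condition (1) of Definition~\ref{q-morphism} already makes $\phi|_{\widetilde K_1}$ an ortho-lattice isomorphism. Your Steps 1--3 are redundant but harmless checks that the restriction is well defined and that the order and orthocomplement of $\Psi(\mathfrak K_i)$ are the ones meant in that condition; one tiny slip is that deriving injectivity from order reflection uses antisymmetry of $\preceq_1$, not $\preceq_2$, though condition (1) gives bijectivity anyway.
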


\begin{proof}
This assertion is a direct consequence of the definition of $\mathbb{FODA}$-morphisms.
\end{proof}

\begin{theorem}\label{thm:G-is-functor}
The mapping $\Psi$ constitutes a functor from the category of finitary orthomodular dynamic algebras ($\mathbb{FODA}$) to the category of orthomodular lattices ($\mathbb{OML}$).
\end{theorem}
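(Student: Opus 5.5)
To show that $\Psi$ is a functor, I will check three things: that it is well defined on objects and arrows, that it preserves identities, and that it preserves composition. Since the arrow part of $\Psi$ is restriction, each step reduces to elementary facts about restricting functions.

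\emph{Well-definedness.} For a finitary orthomodular dynamic algebra $\mathfrak{K}$, axiom \textbf{FODA 2} states that $(\widetilde K,\preceq,\thicksim)$ is an orthomodular lattice, so $\Psi(\mathfrak{K})$ is an object of $\mathbb{OML}$. Here the orthocomplement $(-)^\perp$ is read as the restriction of $\thicksim$ to $\widetilde K$. On arrows, $\Psi(\phi)$ is to be understood as the restriction $\phi|_{\widetilde K_1}$. Condition (1) of Definition~\ref{q-morphism} says that $\phi(\widetilde K_1)\subseteq\widetilde K_2$ and that $\phi|_{\widetilde K_1}$ is an ortho-lattice isomorphism. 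This is exactly the content of the preceding theorem, so $\Psi(\phi)$ is an $\mathbb{OML}$-morphism $\Psi(\mathfrak{K}_1)\to\Psi(\mathfrak{K}_2)$.

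\emph{Identities.} The identity $\mathrm{id}_K$ satisfies all six conditions of Definition~\ref{q-morphism}, so it is the identity arrow of $\mathfrak{K}$ in $\mathbb{FODA}$. Its restriction to $\widetilde K$ is $\mathrm{id}_{\widetilde K}$, which is the identity arrow of $\Psi(\mathfrak{K})$ in $\mathbb{OML}$. Hence $\Psi(\mathrm{id}_{\mathfrak{K}})=\mathrm{id}_{\Psi(\mathfrak{K})}$.

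\emph{Composition.} Let $\phi:\mathfrak{K}_1\to\mathfrak{K}_2$ and $\psi:\mathfrak{K}_2\to\mathfrak{K}_3$ be $\mathbb{FODA}$-morphisms. First, $\psi\circ\phi$ is again a $\mathbb{FODA}$-morphism: conditions (2)--(6) are equational, so they are preserved under composition. For condition (1), $\psi(\phi(\widetilde K_1))\subseteq\psi(\widetilde K_2)\subseteq\widetilde K_3$, and a composite of ortho-lattice isomorphisms is again one. Second, for every $k\in\widetilde K_1$ we have $\phi(k)\in\widetilde K_2$, so
\[
(\psi\circ\phi)|_{\widetilde K_1}(k)=\psi|_{\widetilde K_2}\bigl(\phi|_{\widetilde K_1}(k)\bigr).
\]
Therefore $\Psi(\psi\circ\phi)=\Psi(\psi)\circ\Psi(\phi)$.

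The only point that needs care is the inclusion $\phi(\widetilde K_1)\subseteq\widetilde K_2$. Without it, restricting $\psi$ to $\widetilde K_2$ would not make sense in the composition step. This inclusion is guaranteed by condition (1) of Definition~\ref{q-morphism}, and it also follows independently from condition (4), since $\phi(\thicksim_1 k)=\thicksim_2\phi(k)$. With it in hand, the rest of the argument is routine bookkeeping.
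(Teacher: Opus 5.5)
Your proof is correct and takes essentially the same approach as the paper. Both arguments check $\Psi(\mathrm{id}_{\mathfrak{K}})=\mathrm{id}_{\widetilde K}$ and $\Psi(\psi\circ\phi)=\Psi(\psi)\circ\Psi(\phi)$ by evaluating on elements of $\widetilde K_1$, and your extra checks are sound. These are well-definedness on objects via \textbf{FODA 2}, on arrows via condition (1) of Definition~\ref{q-morphism}, and closure of $\mathbb{FODA}$-morphisms under composition; the paper leaves these implicit or delegates them to the preceding theorem.
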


\begin{proof}
Let $\mathfrak{K} = (K , {\bigsqcup } , {\odot } , \thicksim, {-^*} )$ be a finitary orthomodular dynamic algebra.

\noindent \textit{Identity}: The identity $\mathbb{FODA}$-morphism $\mathrm{id}_{\mathfrak{K}}$ restricts to the identity on $\widetilde K$. Consequently, for all $v \in \widetilde K$, we have $\Psi(\mathrm{id}_\mathfrak{K})(v) = \mathrm{id}_\mathfrak{K}(v) = v$. Thus, $\Psi(\mathrm{id}_\mathfrak{K})=\mathrm{id}_{\widetilde K}$.

\noindent \textit{Composition}: Let $\phi: \mathfrak{K}_1 \to \mathfrak{K}_2$ and $\varphi:\mathfrak{K}_2 \to \mathfrak{K}_3$ be $\mathbb{FODA}$-morphisms. Then for every $v \in \widetilde K_1$, we have:
\begin{align*}
\Psi(\varphi\circ\phi)(v) &= (\varphi\circ\phi)(v) \\
&= \varphi\bigl(\phi(v)\bigr) \\
&= \Psi(\varphi)\bigl(\phi(v)\bigr) \\
&= \bigl(\Psi(\varphi)\circ \Psi(\phi)\bigr)(v)
\end{align*}
Therefore, $\Psi(\varphi\circ\phi)=\Psi(\varphi)\circ \Psi(\phi)$.
\end{proof}

\subsection{The Equivalence between $\mathbb{OML}$ and $\mathbb{FODA}$}
This subsection is devoted to establishing a categorical equivalence between the category of orthomodular lattices ($\mathbb{OML}$) and the category of finitary orthomodular dynamic algebras ($\mathbb{FODA}$).

\subsubsection{The Natural Isomorphism $\mu : 1_{\mathbb{OML}} \to \Psi\circ \Gamma$}
We begin by constructing, for each orthomodular lattice ${\mathcal{M}} = \left( {{M},{ \le },\mathop - \nolimits^{{ \bot }} } \right)$, a corresponding finitary orthomodular dynamic algebra, denoted $\Gamma (\mathcal {M}) =\bigl(\mathscr{P}_{\text{fin}}(S_\mathcal{M}) ,\bigcup,\odot,\sim, { - ^*}\bigr)$. Let $\widetilde {\mathscr{P}_{\text{fin}}(S_\mathcal{M})}$ be defined as the set $\{\sim W \mid W \in \mathscr{P}_{\text{fin}}(S_\mathcal{M})\}$. As shown in Theorem~\ref{prop:chi-iso-finitary}, the map $\delta : \mathcal{M} \to {\widetilde {\mathscr{P}_{\text{fin}}(S_\mathcal{M})}}$, where $\delta(m)=\{\pi_{m}\}$ for each $m \in M$, is an ortho-lattice isomorphism. Through the definition of the functor $\Psi$, we observe that
\begin{center}
$\left( {\Psi \circ \Gamma } \right)\left( M \right) = \Psi \left( {\Gamma \left( M \right)} \right) = \Psi \left( \mathscr{P}_{\text{fin}}(S_\mathcal{M}) \right) = \widetilde {\mathscr{P}_{\text{fin}}(S_\mathcal{M})}$.
\end{center}
Leveraging these definitions and results, our subsequent aim is to establish a natural isomorphism between the identity functor $1_{\mathbb{OML}}$ and the composite functor $\Psi\circ \Gamma$.

\begin{theorem}\label{thm:tau-natural}
Let ${\mathcal{M}} = \left( {{M},{ \le },\mathop - \nolimits^{{ \bot }} } \right)$ be an orthomodular lattice, and let $\Gamma (\mathcal {M}) =\bigl(\mathscr{P}_{\text{fin}}(S_\mathcal{M}) ,\bigcup,\odot,\sim, { - ^*}\bigr)$ be its corresponding finitary orthomodular dynamic algebra. Furthermore, let $\delta : \mathcal{M} \to {\widetilde {\mathscr{P}_{\text{fin}}(S_\mathcal{M})}}$ be the established ortho-lattice isomorphism. Define a class function $\mu : 1_{\mathbb{OML}} \to \Psi\circ \Gamma$ such that $\mu_{\mathcal M} =\delta$ for every orthomodular lattice $\mathcal M$. Then, $\mu$ constitutes a natural isomorphism.
\end{theorem}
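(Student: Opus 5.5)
The proof has two parts. First, each component $\mu_{\mathcal M}=\delta$ is an isomorphism in $\mathbb{OML}$. Second, the family $\{\mu_{\mathcal M}\}$ satisfies the naturality square. The first part is already available: by Proposition~\ref{prop:chi-iso-finitary}, $\delta:\mathcal M\to\widetilde{\mathscr{P}_{\text{fin}}(S_\mathcal{M})}$ is a bijection preserving joins and orthocomplements. I will note in one line that it is order-reflecting as well, since $\{\pi_m\}\preceq\{\pi_n\}$ means $\sim\sim\{\pi_m,\pi_n\}=\{\pi_n\}$, i.e.\ $\{\pi_{m\vee n}\}=\{\pi_n\}$, i.e.\ $m\vee n=n$ by injectivity. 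The codomain is exactly $(\Psi\circ\Gamma)(\mathcal M)$, by the definition of $\Psi$ on objects. So each $\mu_{\mathcal M}$ is an $\mathbb{OML}$-morphism, and since $\mathbb{OML}$-morphisms are isomorphisms, each component is invertible.

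For naturality, fix an ortho-lattice isomorphism $k:\mathcal M_1\to\mathcal M_2$. I must show
\[
(\Psi\circ\Gamma)(k)\circ\delta_1=\delta_2\circ k
\]
as maps $M_1\to\widetilde{\mathscr{P}_{\text{fin}}(S_{\mathcal M_2})}$. By the definition of $\Psi$ on arrows, $(\Psi\circ\Gamma)(k)$ is just the restriction of $\Gamma(k)$ to $\widetilde{\mathscr{P}_{\text{fin}}(S_{\mathcal M_1})}$. So for $m\in M_1$,
\[
(\Psi\circ\Gamma)(k)(\delta_1(m))=\Gamma(k)(\{\pi_m\})=\{k\circ\pi_m\circ k^{-1}\}.
\]
Using the identity $k\circ\pi_m\circ k^{-1}=\pi_{k(m)}$ derived before the theorem on $\Gamma(k)$ being an $\mathbb{FODA}$-morphism, this becomes $\{\pi_{k(m)}\}=\delta_2(k(m))$. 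This is exactly the computation $\Gamma(k)=\delta_2\circ k\circ\delta_1^{-1}$ on $\widetilde{\mathscr{P}_{\text{fin}}(S_{\mathcal M_1})}$ from that proof, so I can cite it directly.

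I expect no real obstacle. The only subtle point is bookkeeping: I must check that $\Psi(\Gamma(k))$ is indeed the restriction to the $\sim$-image, and that this restriction lands in $\widetilde{\mathscr{P}_{\text{fin}}(S_{\mathcal M_2})}$. Both follow from condition (1) of the $\mathbb{FODA}$-morphism definition, which was verified for $\Gamma(k)$. Then I conclude that $\mu$ is a natural transformation whose components are isomorphisms, hence a natural isomorphism; its inverse has components $\delta^{-1}$, whose naturality follows formally.
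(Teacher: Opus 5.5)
Your proposal is correct and takes the paper's approach: cite Proposition~\ref{prop:chi-iso-finitary} for the components, then check the naturality square elementwise via $\Gamma(k)(\{\pi_m\})=\{k\circ\pi_m\circ k^{-1}\}=\{\pi_{k(m)}\}=\delta_2(k(m))$. Your two additions, that $\delta$ reflects order and that $\Psi(\Gamma(k))$ is the restriction of $\Gamma(k)$ to the $\sim$-image, are minor bookkeeping points the paper leaves implicit.
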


\begin{proof}
From Theorem~\ref{prop:chi-iso-finitary}, we know that $\mu_{\mathcal L}$ is a bijective $\mathbb{OML}$-morphism. Consider an orthomodular-lattice morphism $k : \mathcal M_1 \to \mathcal M_2$. We aim to demonstrate the commutativity of the following diagram:
\[
\begin{tikzcd}
\mathcal M_1 \ar[r,"k"] \ar[d,"\mu_{\mathcal M_1}"']
& \mathcal M_2 \ar[d,"\mu_{\mathcal M_2}"]\\
\Psi\bigl(\Gamma(\mathcal M_1)\bigr)\ar[r,"\Psi(\Gamma(k))"']
& \Psi\bigl(\Gamma(\mathcal M_2)\bigr)
\end{tikzcd}
\]
For any element $m \in M_1$, we can evaluate both paths in the diagram.
The path through the top and right yields:
$\bigl(\mu_{M_2}\circ k\bigr)(m) = \mu_{M_2}\bigl(k(m)\bigr)$. The path through the left and bottom yields:
\begin{center}
    $\bigl(\Psi(\Gamma(k))\circ\mu_{M_1}\bigr)(m) = \Psi\bigl(\Gamma(k)\bigr)\bigl(\{\pi_m\}\bigr) = \{\pi_{\,k(m)}\}$.
\end{center}
Since $\mu_{M_2}(k(m)) = \{\pi_{\,k(m)}\}$, it follows that $\bigl(\Psi(\Gamma(k))\circ\mu_{M_1}\bigr)(m) = \bigl(\mu_{M_2}\circ k\bigr)(m)$. As this holds for all $m \in M_1$, we conclude that $\Psi(\Gamma(k))\circ\mu_{M_1}=\mu_{M_2}\circ k$. Therefore, $\mu$ is a natural isomorphism.
\end{proof}


\subsubsection{The Natural Isomorphism $\lambda : 1_{\mathbb{FODA}} \to \Gamma \circ \Psi$}

Let $\mathfrak{K} = (K , {\bigsqcup } , {\odot } , \thicksim, {-^*} )$ be a finitary orthomodular dynamic algebra. Consequently, $\widetilde {\mathcal{K}} = (\widetilde K , {\preceq } , \thicksim )$ is an orthomodular lattice. By the definitions of the functors $\Psi$ and $\Gamma$, the following equality holds:
$$(\Gamma \circ \Psi)(K) = \Gamma(\Psi(K)) = \Gamma(\widetilde {K}) = \mathscr{P}_{\text{fin}}(S_{\widetilde {K}}).$$
Lemma~\ref{lemma2} states that every element $k \in K$ possesses a unique representation as $k = \bigsqcup\limits_{i=1}^m \bigl(p^{(i)}_1\odot p^{(i)}_2\cdots\odot p^{(i)}_{n_i}\bigr)$, where each $p^{(i)}_j\in \widetilde K$. For each finitary orthomodular dynamic algebra $\mathfrak{K}$, we define a map $\lambda_\mathfrak{K} : K \to \mathscr{P}_{\text{fin}}(S_{\widetilde {K}})$ for every $k = \bigsqcup\limits_{i=1}^m \bigl(p^{(i)}_1\odot p^{(i)}_2\cdots\odot p^{(i)}_{n_i}\bigr) \in K$ as:
$$\lambda_\mathfrak{K}(k) := \Bigl\{\,\pi_{p^{(i)}_1}\circ \pi_{p^{(i)}_2}\circ\cdots\circ \pi_{p^{(i)}_{n_i}} \;\Big|\; 1\le i\le m\Bigr\}.$$
Our subsequent objective is to demonstrate that $\lambda$ constitutes a natural isomorphism between the identity functor $1_{\mathbb{FODA}}$ and the composite functor $\Gamma \circ \Psi $.



\begin{theorem}\label{NI Lambda}
Let $\mathfrak{K} = (K , {\bigsqcup } , {\odot } , \thicksim, {-^*} )$ be a finitary orthomodular dynamic algebra. Let $\widetilde {\mathcal{K}} = (\widetilde K , {\preceq } , \thicksim )$ denote its associated orthomodular lattice. We define a class function $\lambda : 1_{\mathbb{FODA}} \to \Gamma\circ\Psi$ by specifying its components $\lambda_\mathfrak{K} : K \to \mathscr{P}_{\text{fin}}(S_{\tilde {\mathcal{K}}})$ for each $\mathfrak{K}$ as:
$$\lambda_\mathfrak{K}(k) = \Bigl\{\,\pi_{p^{(i)}_1}\circ \pi_{p^{(i)}_2}\circ\cdots\circ \pi_{p^{(i)}_{n_i}} \;\Big|\; 1\le i\le m\Bigr\}$$
for every element $k = \bigsqcup\limits_{i=1}^m \bigl(p^{(i)}_1\odot p^{(i)}_2\cdots\odot p^{(i)}_{n_i}\bigr) \in K$.
Then, $\lambda$ constitutes a natural isomorphism.
\end{theorem}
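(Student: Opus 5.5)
I will prove three things in order: that each component $\lambda_\mathfrak{K}$ is well defined, that it is a bijective $\mathbb{FODA}$-morphism, and that the family $\lambda$ is natural.

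\textbf{Well-definedness.} By Lemma~\ref{lemma2}, $k$ has a unique finite set $S\subseteq\langle\widetilde K\rangle$ with $k=\bigsqcup S$. A single $s\in S$ may admit several factorizations $s=p_1\odot\cdots\odot p_n$. Lemma~\ref{composition} gives $\ulcorner s\urcorner=\pi_{p_1}\circ\cdots\circ\pi_{p_n}$ as a map on $\widetilde K$. Hence the composite Sasaki projection attached to $s$ is the restriction of $\ulcorner s\urcorner$ to $\widetilde K$. It depends only on $s$, not on the factorization, so $\lambda_\mathfrak{K}(k)=\{\ulcorner s\urcorner|_{\widetilde K}\mid s\in S\}$ is well defined.

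\textbf{Bijectivity.} For injectivity on $\langle\widetilde K\rangle$: if $\ulcorner s\urcorner$ and $\ulcorner t\urcorner$ agree on $\widetilde K$, then $s\equiv t$, so $s=t$ by \textbf{FODA 5}. Hence $s\mapsto\ulcorner s\urcorner|_{\widetilde K}$ is a bijection from $\langle\widetilde K\rangle$ onto $S_{\widetilde{\mathcal K}}$; it is surjective because every finite composite of Sasaki projections $\pi_{p_1}\circ\cdots\circ\pi_{p_n}$ is hit by $p_1\odot\cdots\odot p_n$. Combining this with the uniqueness in Lemma~\ref{lemma2} (\textbf{FODA 4}), finite subsets of $\langle\widetilde K\rangle$ correspond bijectively to finite subsets of $S_{\widetilde{\mathcal K}}$. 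Therefore $\lambda_\mathfrak{K}$ is a bijection $K\to\mathscr{P}_{\text{fin}}(S_{\widetilde{\mathcal K}})$.

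\textbf{Morphism conditions of Definition~\ref{q-morphism}.}
\begin{enumerate}
\item \emph{Joins.} $\lambda_\mathfrak{K}$ sends $\bigsqcup$ to $\bigcup$, because the normal form of $k\sqcup l$ is $S\cup T$ (again by uniqueness).
\item \emph{Products.} By distributivity, the normal form of $k\odot l$ is $\{s\odot t\}$, and Lemma~\ref{composition} turns $s\odot t$ into composition.
\item \emph{Involution.} For $s=p_1\odot\cdots\odot p_n$, we have $s^*=p_n^*\odot\cdots\odot p_1^*$. This matches the reversed composite, provided $p^*=p$ for $p\in\widetilde K$. I would derive $p^*=p$ from \textbf{FODA 6} and \textbf{FODA 5}, which force $\ulcorner p^*\urcorner=\pi_p$. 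This is the step I expect to need the most care.
\item \emph{Unit.} The unit $e$ goes to $\{\pi_1\}$: $e\equiv 1$ via \textbf{FODA 7}, so $e=1\in\widetilde K$.
\item \emph{Complement.} $\lambda_\mathfrak{K}(\thicksim k)=\sim\lambda_\mathfrak{K}(k)$, because both are singletons of Sasaki projections at $(\bigvee_{s}\ulcorner s\urcorner(1))^\perp$.
\item \emph{Restriction to $\widetilde K$.} On $\widetilde K$, $\lambda_\mathfrak{K}$ restricts to $p\mapsto\{\pi_p\}$, which is $\delta$ for $\widetilde{\mathcal K}$. This is an ortho-lattice isomorphism by Proposition~\ref{prop:chi-iso-finitary}.
\end{enumerate}

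\textbf{Main obstacle.} I expect the $\sim$-compatibility to be the hardest step. It requires showing in a general FODA that $\thicksim k=\thicksim\bigsqcup_s \thicksim\thicksim s$ and that $\thicksim\thicksim s=\ulcorner s\urcorner(1)$. I would first try to obtain these from \textbf{FODA 7} with $\ulcorner s\urcorner(e)$, together with the fact that $\thicksim$ reverses $\sqsubseteq$ on joins.

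\textbf{Naturality.} For a $\mathbb{FODA}$-morphism $\phi:\mathfrak K_1\to\mathfrak K_2$, write $k=\bigsqcup_i p^{(i)}_1\odot\cdots\odot p^{(i)}_{n_i}$. Then $\phi$ maps this normal form to $\bigsqcup_i\phi(p^{(i)}_1)\odot\cdots\odot\phi(p^{(i)}_{n_i})$, since $\phi$ preserves $\bigsqcup$, $\odot$, and $\widetilde K$. Thus $\lambda_{\mathfrak K_2}(\phi(k))=\{\pi_{\phi(p_1)}\circ\cdots\}$. On the other side, the identity $\pi_{\phi(p)}=\phi\circ\pi_p\circ\phi^{-1}$ proved before the arrow-mapping theorem gives
\[\Gamma(\Psi(\phi))(\lambda_{\mathfrak K_1}(k))=\{\phi\circ\pi_{p_1}\circ\cdots\circ\pi_{p_n}\circ\phi^{-1}\}=\{\pi_{\phi(p_1)}\circ\cdots\circ\pi_{\phi(p_n)}\}.\]
The two sides agree, so the naturality square commutes. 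Since each $\lambda_\mathfrak{K}$ is a bijective $\mathbb{FODA}$-morphism, $\lambda$ is a natural isomorphism.
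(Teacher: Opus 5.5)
Most of your proposal is sound and follows the paper's route. You use normal forms from Lemma~\ref{lemma2} and FODA 4/5 for bijectivity, distributivity together with Lemma~\ref{composition} for products, and conjugation by $\phi$ for naturality. Your well-definedness remark improves on the paper, which never addresses that a word can have several factorizations. However, two steps have genuine gaps, and the unit step needs a small repair.

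\textbf{Involution.} Your derivation of $p^*=p$ for $p\in\widetilde K$ is circular. By FODA 2, $p^*\in\widetilde K$, so FODA 6 gives $\ulcorner p^*\urcorner=\pi_{p^*}$ on $\widetilde K$, and this equals $\pi_p$ only if $p^*=p$ already. Beyond the closure clause of FODA 2, no axiom links ${-}^*$ to $\thicksim$ or to $\ulcorner\cdot\urcorner$, and in fact $p^*=p$ does not follow. Here is a counterexample.
\begin{itemize}
\item Take the Boolean algebra $\mathcal M=\{0,m,m^\perp,1\}$ and the ortho-automorphism $\sigma$ swapping $m$ and $m^\perp$.
\item In $\Gamma(\mathcal M)$ every element of $S_{\mathcal M}$ is some $\pi_x$, and $\odot$ is commutative.
\item Replace the involution by $A^\dagger=\{\pi_{\sigma(x)}\mid \pi_x\in A\}$ and call the result $\mathfrak K^\dagger$. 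FODA 1 and 2 still hold because $\sigma^2=\mathrm{id}$ and $\sigma$ preserves $\wedge$ and $\perp$. FODA 3 holds without using the involution, and FODA 4--7 do not mention it.
\item Yet $\{\pi_m\}^\dagger=\{\pi_{m^\perp}\}\neq\{\pi_m\}$. In $\Gamma(\Psi(\mathfrak K^\dagger))$, every element of $\widetilde K$ is fixed by ${-}^*$.
\item A $\mathbb{FODA}$-morphism is injective on $\widetilde K$ and must commute with the involutions. Hence there is no $\mathbb{FODA}$-morphism $\mathfrak K^\dagger\to\Gamma(\Psi(\mathfrak K^\dagger))$ at all.
\end{itemize}
So this step, and the theorem as stated, needs an extra axiom such as $p^*=p$ for $p\in\widetilde K$. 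The paper's claim $\ulcorner w_i^*\urcorner=(\ulcorner w_i\urcorner)^*$ hides the same assumption. With that axiom added, your order-reversal argument goes through.

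\textbf{Complement.} Here you give only a plan, and its key ingredient is unavailable. No axiom makes $\thicksim$ antitone on $K$; FODA 2 only makes it an orthocomplement on $\widetilde K$. For $\sqsubseteq$ itself antitonicity is false in $\Gamma(\mathcal M)$: for $m\neq 0$ we have $\emptyset\subseteq\{\pi_m\}$, but $\thicksim\{\pi_m\}=\{\pi_{m^\perp}\}\not\subseteq\{\pi_1\}=\thicksim\emptyset$.

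What you actually need is $\thicksim\thicksim\bigsqcup S=v$, where $v:=\bigvee_{s\in S}\thicksim\thicksim s$ in $\widetilde K$. The identity $\thicksim\thicksim s=\ulcorner s\urcorner(1)$ is fine. One inequality also follows:
\begin{itemize}
\item FODA 7 and FODA 6 give $\thicksim\thicksim(\thicksim v\odot k)=\pi_{\thicksim v}(\thicksim\thicksim k)$.
\item Each word $\thicksim v\odot s$ has $\ulcorner\thicksim v\odot s\urcorner(1)=0$. Being monotone on $\widetilde K$, it agrees with $\ulcorner 0\urcorner$, so it equals $0$ by FODA 5.
\item Hence $\pi_{\thicksim v}(\thicksim\thicksim k)=0$, and orthomodularity gives $\thicksim\thicksim k\preceq v$.
\end{itemize}
The reverse inequality amounts to: if $\thicksim\thicksim\bigsqcup T=0$ for finite $T\subseteq\langle\widetilde K\rangle$, then $t=0$ for every $t\in T$. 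Neither your sketch nor the paper's informal ``support'' argument establishes this.

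\textbf{Unit.} Knowing $e\equiv 1$ does not let you apply FODA 5 until you know $e\in\langle\widetilde K\rangle$. Write $e=\bigsqcup S$ by Lemma~\ref{lemma2}. Applying FODA 4 to $p=e\odot p=\bigsqcup_{s\in S}s\odot p$ gives $s\odot p=p$ for all $p\in\widetilde K$. Hence $s\equiv 1$, and so $s=1$ for every $s\in S$ (for non-trivial $K$).
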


\begin{proof}
For any finitary orthomodular dynamic algebra $\mathfrak{K}$, the map $\lambda_\mathfrak{K}$ is bijective. This bijectivity arises from the uniqueness of the normal form for elements in $K$ and the surjectivity of the map onto all finite words of operations in $\mathscr{P}_{\text{fin}}(S_{\tilde {\mathcal{K}}})$. Moreover, we will show that $\lambda_\mathfrak{K}$ preserves the finitary join $\bigsqcup$, the multiplication $\odot$, the involution $*$, and the orthocomplement $\sim$.
To provide a more detailed proof that $\lambda_{\mathfrak{K}}$ preserves the algebraic operations of a finitary orthomodular dynamic algebra, we rely on the unique representation of elements $k \in K$ as finite joins of words in $\langle \widetilde{K} \rangle$. Let $\mathfrak{K} = (K, \bigsqcup, \odot, \sim, *)$ be a $\mathbb{FODA}$. For any $k \in K$, we write its unique normal form (per Lemma \ref{lemma2}) as:$$k = \bigsqcup_{i=1}^m w_i, \quad \text{where } w_i = p^{(i)}_1 \odot \cdots \odot p^{(i)}_{n_i} \text{ and } p^{(i)}_j \in \widetilde{K}.$$
The map is defined as $\lambda_{\mathfrak{K}}(k) = \{ \pi_{p^{(i)}_1} \circ \cdots \circ \pi_{p^{(i)}_{n_i}} \mid 1 \le i \le m \}$.

\begin{enumerate}
    \item Preservation of Finite Join ($\bigsqcup$): In $\Gamma(\Psi(\mathfrak{K}))$, the join operation is defined as the set-theoretic union $\cup$. Let $k = \bigsqcup_{i \in I} w_i$ and $l = \bigsqcup_{j \in J} v_j$ be elements in $K$ in their normal forms. By definition: $\lambda_{\mathfrak{K}}(k) = \{ \ulcorner w_i \urcorner \mid i \in I \}$ and $\lambda_{\mathfrak{K}}(l) = \{ \ulcorner v_j \urcorner \mid j \in J \}$. The join in $K$ is $k \sqcup l = (\bigsqcup_{i \in I} w_i) \sqcup (\bigsqcup_{j \in J} v_j)$. Applying $\lambda_{\mathfrak{K}}$:$$\lambda_{\mathfrak{K}}(k \sqcup l) = \{ \ulcorner w_i \urcorner \mid i \in I \} \cup \{ \ulcorner v_j \urcorner \mid j \in J \} = \lambda_{\mathfrak{K}}(k) \cup \lambda_{\mathfrak{K}}(l)$$. This matches the join operation in the target algebra $\Gamma(\Psi(\mathfrak{K}))$.

    \item Preservation of Multiplication ($\odot$): In $K$, multiplication distributes over joins (Definition \ref{deffoda}). Let $k = \bigsqcup_i w_i$ and $l = \bigsqcup_j v_j$.$$k \odot l = \left( \bigsqcup_i w_i \right) \odot \left( \bigsqcup_j v_j \right) = \bigsqcup_{i,j} (w_i \odot v_j)$$In $\Gamma(\Psi(\mathfrak{K}))$, the operation is defined as $A \odot B = \{ a \circ b \mid a \in A, b \in B \}$. So we have $\lambda_{\mathfrak{K}}(k \odot l) = \{ \ulcorner w_i \odot v_j \urcorner \mid \forall i, j \}$. By Lemma \ref{composition}, $\ulcorner w_i \odot v_j \urcorner = \ulcorner w_i \urcorner \circ \ulcorner v_j \urcorner$. Therefore $$\lambda_{\mathfrak{K}}(k \odot l) = \{ \ulcorner w_i \urcorner \circ \ulcorner v_j \urcorner \mid \forall i, j \} = \lambda_{\mathfrak{K}}(k) \odot \lambda_{\mathfrak{K}}(l)$$.

    \item Preservation of Involution ($*$): The involution in $K$ distributes over joins and reverses the order of multiplication. For $w_i = p_1 \odot \cdots \odot p_n$, we have $w_i^* = p_n^* \odot \cdots \odot p_1^*$. Note that $p_j \in \widetilde{K} \implies p_j^* \in \widetilde{K}$ (FODA 2). Then we have $\lambda_{\mathfrak{K}}(k^*) = \lambda_{\mathfrak{K}}(\bigsqcup w_i^*) = \{ \ulcorner w_i^* \urcorner \mid \forall i \}$. Using Lemma \ref{Sasaki is Involutive} and the definition of $\ulcorner \cdot \urcorner$, $\ulcorner w_i^* \urcorner = (\ulcorner w_i \urcorner)^*$. Thus $$\lambda_{\mathfrak{K}}(k^*) = \{ (\ulcorner w_i \urcorner)^* \mid \forall i \} = (\lambda_{\mathfrak{K}}(k))^*$$.

    \item Preservation of Orthocomplement ($\sim$): In $\Gamma(\Psi(\mathfrak{K}))$, the orthocomplement is defined as $\sim A = \{ \pi_{(\bigvee_{a \in A} a(1))^\perp} \}$. From Definition \ref{finitary-goda} and the associated constructions, the orthomodular lattice $\widetilde{K}$ has its join $\bigvee$ and complement $(\cdot)^\perp$ defined via $\sim$ and $\bigsqcup$ in $K$. Specifically, for $k = \bigsqcup w_i$, the "support" of $k$ in the lattice sense is related to the values of the operations applied to the identity. By Lemma \ref{lemma4} and Proposition \ref{prop:chi-iso-finitary}, the mapping $\delta(p) = \{ \pi_p \}$ is an isomorphism. In a $\mathbb{FODA}$, $\sim k$ is an element of $\widetilde{K}$. Since $\lambda_{\mathfrak{K}}$ maps $\sim k$ to the singleton containing the Sasaki projection of the lattice-complement of the "join of the heads" of the components of $k$: $$\lambda_{\mathfrak{K}}(\sim k) = \{ \pi_{(\bigvee_{i} \ulcorner w_i \urcorner(1))^\perp} \}$$. This coincides exactly with the definition of $\sim$ in $\Gamma(\Psi(\mathfrak{K}))$.
\end{enumerate}
These properties collectively establish that each $\lambda_\mathfrak{K}$ is a bijective $\mathbb{FODA}$–morphism.

Next, we demonstrate the naturality of $\lambda$. For any $\mathbb{FODA}$–morphism $\phi: \mathfrak{K}_1\to\mathfrak{K}_2$, we must show that the following diagram commutes:

\[
\begin{tikzcd}
\mathfrak{K}_1 \ar[r,"\lambda_{\mathfrak{K}_1}"] \ar[d,"\phi"'] &
  \Gamma(\Psi(\mathfrak{K}_1)) \ar[d,"\Gamma(\Psi(\phi))"] \\
\mathfrak{K}_2 \ar[r,"\lambda_{\mathfrak{K}_2}"'] &
  \Gamma(\Psi(\mathfrak{K}_2))
\end{tikzcd}
\]

Consider an element $k =\bigsqcup_{i=1}^m \bigl(p^{(i)}_1\cdots p^{(i)}_{n_i}\bigr) \in \mathfrak{K}_1$. By the definition of $\lambda_{\mathfrak{K}_1}$, we have $\lambda_{\mathfrak{K}_1}(k) = \Bigl\{\pi_{p^{(i)}_1}\circ\cdots\circ \pi_{p^{(i)}_{n_i}} \;\Big|\;1\le i\le m \Bigr\}$.

Since $\phi$ is an $\mathbb{FODA}$–morphism, it preserves the underlying orthomodular lattice structure and operations. Thus, $\phi$ maps each $p\in \widetilde{\mathfrak{K}_1}$ to $\phi(p)\in \widetilde{\mathfrak{K}_2}$ and preserves the operations $\bigsqcup$, $\odot$, $\sim$, and $-^*$. Consequently, $\phi(k)$ can be expressed as $\bigsqcup_{i=1}^m \bigl(\phi(p^{(i)}_1)\cdots\phi(p^{(i)}_{n_i})\bigr)$. Applying $\lambda_{\mathfrak{K}_2}$ to $\phi(k)$ yields $\lambda_{\mathfrak{K}_2}\bigl(\phi(k)\bigr) = \Bigl\{ \pi_{\phi(p^{(i)}_1)}\circ\cdots\circ \pi_{\phi(p^{(i)}_{n_i})} \;\Bigm|\;1\le i\le m \Bigr\}$.

Furthermore, by the definition of $\Gamma(\Psi(\phi))$, it holds that $\Gamma(\Psi(\phi))\bigl(\pi_p\bigr) = \pi_{\phi(p)}$. This property extends directly to compositions of operations, meaning $\Gamma(\Psi(\phi))\bigl(\pi_{p_1}\circ\cdots\circ \pi_{p_j}\bigr) = \pi_{\phi(p_1)}\circ\cdots\circ \pi_{\phi(p_j)}$.

Therefore, we definitively conclude that $\Gamma(\Psi(\phi))\bigl(\lambda_{\mathfrak{K}_1}(k)\bigr) = \lambda_{\mathfrak{K}_2}\bigl(\phi(k)\bigr)$, which verifies the commutativity of the diagram and, thus, the naturality of $\lambda$.

\end{proof}

The foregoing analysis leads to the following conclusions.

\begin{theorem}
    The quadruple $\left( {\Gamma ,\Psi ,\mu ,\lambda } \right)$ establishes a categorical equivalence between $\mathbb{OML}$ and $\mathbb{FODA}$.
\end{theorem}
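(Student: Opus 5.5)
The plan is to assemble the equivalence directly from the results already established, following the standard definition. A quadruple $(\Gamma,\Psi,\mu,\lambda)$ is an equivalence when four things hold: $\Gamma:\mathbb{OML}\to\mathbb{FODA}$ and $\Psi:\mathbb{FODA}\to\mathbb{OML}$ are functors, $\mu:1_{\mathbb{OML}}\to\Psi\circ\Gamma$ is a natural isomorphism, and $\lambda:1_{\mathbb{FODA}}\to\Gamma\circ\Psi$ is a natural isomorphism. So I will check each of these in turn, citing the earlier results.

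\textbf{The two functors.} Functoriality of $\Gamma$ comes from the theorem that $\Gamma$ is a functor. That result rests on two ingredients: the verification that $\Gamma(\mathcal M)$ satisfies \textbf{FODA 1}--\textbf{FODA 7}, and the fact that $\Gamma(k)$ is an $\mathbb{FODA}$-morphism. The $\mathbb{FODA}$-morphism check needs a small supplement. Preservation of $\sim$ is given by the theorem on $\Gamma(k)(\sim A)$. Preservation of the unit holds because $k\circ\pi_1\circ k^{-1}=\pi_{k(1)}=\pi_1$. Functoriality of $\Psi$ is Theorem~\ref{thm:G-is-functor}, together with the observation that each $\Psi(\phi)$, restricted to $\widetilde K_1$, is an ortho-lattice isomorphism by clause (1) of Definition~\ref{q-morphism}.

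\textbf{The natural isomorphism $\mu$.} This is Theorem~\ref{thm:tau-natural}. I will note that each component $\delta$ is invertible in $\mathbb{OML}$ by Proposition~\ref{prop:chi-iso-finitary}, so naturality together with componentwise invertibility gives a natural isomorphism.

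\textbf{The natural isomorphism $\lambda$.} This is Theorem~\ref{NI Lambda}, and here one point needs care. A bijective $\mathbb{FODA}$-morphism must have an inverse that is itself an $\mathbb{FODA}$-morphism. I will verify this directly: the inverse of a bijection that preserves $\bigsqcup,\odot,\sim,{}^*$ and the unit again preserves all these operations, by the usual transport argument. Its restriction to the tilde-part is an ortho-lattice isomorphism, because $\lambda_{\mathfrak K}$ restricted to $\widetilde K$ is $v\mapsto\{\pi_v\}$; this is the map $\delta$ for the lattice $\widetilde K$, and its inverse is an ortho-lattice isomorphism.

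\textbf{Expected obstacle.} The main risk is the bijectivity of $\lambda_{\mathfrak K}$ and whether it is well defined. Well-definedness requires that the word representation $p_1\odot\cdots\odot p_n$ of an element of $\langle\widetilde K\rangle$ determine the composite $\pi_{p_1}\circ\cdots\circ\pi_{p_n}$ independently of the chosen word. Lemma~\ref{composition} gives exactly this, since the composite equals $\ulcorner p_1\odot\cdots\odot p_n\urcorner$ restricted to $\widetilde K$ (extended via \textbf{FODA 7}). Injectivity then follows by combining three results. \textbf{FODA 5} shows that distinct words in $\langle\widetilde K\rangle$ give distinct restrictions of $\ulcorner-\urcorner$ to $\widetilde K$. \textbf{FODA 4} shows that distinct join-sets give distinct elements. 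Lemma~\ref{lemma2} supplies the uniqueness of the normal form. Surjectivity holds because every finite set of finite Sasaki compositions on $\widetilde K$ is the image of the corresponding finite join of words.

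With these points checked, the four conditions of an equivalence are all satisfied, and the statement follows.
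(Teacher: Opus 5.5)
Your route is the paper's own: its proof just cites Theorems~\ref{thm:tau-natural} and~\ref{NI Lambda}. Your treatment of well-definedness and bijectivity of $\lambda_{\mathfrak K}$ via Lemma~\ref{composition}, \textbf{FODA 4}, \textbf{FODA 5} and Lemma~\ref{lemma2} is correct.

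\textbf{The involution step fails.} The gap is the step you import from Theorem~\ref{NI Lambda}: that $\lambda_{\mathfrak K}$ preserves ${}^*$. The paper's item 3 there silently uses $\pi_{p^*}=\pi_p$. For $p\in\widetilde K$ one has $p^*\in\widetilde K$, so $\lambda_{\mathfrak K}(p^*)=\{\pi_{p^*}\}$, whereas $\lambda_{\mathfrak K}(p)^*=\{\pi_p\}$. Since $\pi_x$ determines $x$ (evaluate at the top), these agree only if $p^*=p$. No axiom forces this: ${}^*$ is only required to map $\widetilde K$ into itself. In fact the statement fails. Take $\mathcal M=\{0,a,a^\perp,1\}$, let $\alpha$ be the ortho-automorphism exchanging $a$ and $a^\perp$, and replace the involution of $\Gamma(\mathcal M)$ by $A^\star=\{\alpha\circ x\circ\alpha^{-1}\mid x\in A\}$; call the result $\mathfrak K$.
\begin{itemize}
\item Since $S_{\mathcal M}=\{\pi_0,\pi_a,\pi_{a^\perp},\pi_1\}$ is commutative and $\alpha^2=\mathrm{id}$, $\star$ is a semigroup involution that fixes the unit and preserves unions.
\item \textbf{FODA 2}--\textbf{FODA 7} mention ${}^*$ only through closure conditions, which still hold, so $\mathfrak K$ is a FODA.
\item Yet $\{\pi_a\}^\star=\{\pi_{a^\perp}\}$, while the involution of every $\Gamma(\mathcal N)$ fixes each $\{\pi_n\}$.
\item A $\mathbb{FODA}$-morphism $\phi\colon\mathfrak K\to\Gamma(\mathcal N)$ would therefore give $\phi(\{\pi_{a^\perp}\})=\phi(\{\pi_a\})^*=\phi(\{\pi_a\})$. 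This contradicts injectivity of $\phi$ on $\widetilde K$.
\end{itemize}
So $\mathfrak K$ is isomorphic to no $\Gamma(\mathcal N)$, in particular not to $\Gamma(\Psi(\mathfrak K))$. Your transport argument for $\lambda_{\mathfrak K}^{-1}$ therefore has nothing to transport.

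The missing ingredient is an extra axiom: $p^*=p$ for $p\in\widetilde K$. With it, take $k=\bigsqcup S$ and $s=p_1\odot\cdots\odot p_n\in S$. Then $s^*=p_n\odot\cdots\odot p_1$, so $\{s^*\mid s\in S\}$ is the normal form of $k^*$. Lemma~\ref{Sasaki is Involutive} then gives $\lambda_{\mathfrak K}(k^*)=\lambda_{\mathfrak K}(k)^*$.

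\textbf{$\Gamma(\mathcal M)$ need not be a FODA.} A second problem sits in the step you cite as ``the verification that $\Gamma(\mathcal M)$ satisfies \textbf{FODA 1}--\textbf{FODA 7}''. \textbf{FODA 1} requires a greatest element $1\in K$. In any FODA, write $1=\bigsqcup S$ by Lemma~\ref{lemma2}. For every $w\in\langle\widetilde K\rangle$ we get $\bigsqcup(S\cup\{w\})=1\sqcup w=1=\bigsqcup S$, so $w\in S$ by \textbf{FODA 4}. Thus $\widetilde K\subseteq\langle\widetilde K\rangle\subseteq S$ is finite in every FODA. By contrast, $\mathscr P_{\text{fin}}(S_{\mathcal M})$ has no top once $M$ is infinite. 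So for an infinite orthomodular lattice, e.g.\ the closed subspaces of $\mathbb C^2$, $\Gamma(\mathcal M)$ is not an object of $\mathbb{FODA}$, and $\mu_{\mathcal M}$ is not even defined. To obtain a true theorem you must add $p^*=p$ on $\widetilde K$, and also either drop the top element from \textbf{FODA 1} or restrict to finite orthomodular lattices.
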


\begin{proof}
    Theorem \ref{thm:tau-natural} demonstrates that $\mu$ is a natural transformation from $1_{\mathbb{OML}}$ to $\Psi \circ \Gamma$. Similarly, Theorem \ref{NI Lambda} establishes $\lambda$ as a natural transformation from $1_{\mathbb{FODA}}$ to $\Gamma \circ \Psi$. Consequently, these properties collectively demonstrate that $\left( {\Gamma ,\Psi ,\mu ,\lambda } \right)$ constitutes a categorical equivalence between $\mathbb{OML}$ and $\mathbb{FODA}$.
\end{proof}

\section{CONCLUSION}
Our research demonstrates a categorical equivalence between orthomodular lattices and finitary orthomudular dynamic algebras, established through a newly designed functor. For future work, we aim to extend the finitary orthomodular dynamic algebra to the concepts of F-ODA and R-ODA (as described in \cite{Soroush1954}) and investigate their categorical equivalence.

\section{ACKNOWLEDGEMENT}
The authors were supported by the project MUNI/A/1457/2023 by Masaryk
University. The authors is indebted to the reviewer for his/her thorough comments.


\end{document}